\theoremstyle{plain}
\newtheorem{introtheorem}{Theorem}
\newtheorem{theorem}{Theorem}[section]
\newtheorem{corollary}[theorem]{Corollary}
\newtheorem{definition}[theorem]{Definition}
\newtheorem{lemma}[theorem]{Lemma}
\newtheorem{proposition}[theorem]{Proposition}
\newcommand{\bc}{\textbf{c}}
\newcommand{\ba}{\textbf{a}}
\newcommand{\parC}{\mathcal{C}}
\def\C{\mathbb{C}}
\def\P{\mathbb{P}}
\def\N{\mathbb{N}}
\def\CP2{{\mathbb{CP}^2}}
\DeclareMathAlphabet{\pazocal}{OMS}{zplm}{m}{n}
\def\cC{{\pazocal{C}}}
\def\cD{{\pazocal{D}}}
\def\cF{{\pazocal{F}}}
\def\cG{{\pazocal{G}}}
\def\cJ{{\pazocal{J}}}
\def\br{{\mathbf{r}}}
\def\bm{{\mathbf{m}}}
\def\bl{{\boldsymbol\ell}}
\def\balpha{{\boldsymbol\alpha}}
\def\boeta{{\boldsymbol\eta}}
\def\beps{{\boldsymbol\varepsilon}}
\def\bc{{\mathbf{c}}}
\def\aoneberk{\mathbb{A}^1_{an}}
\def\aone{\mathbb{A}^1}
\def\poneberk{\mathbb{P}^1_{an}}
\def\fatou{\cF}
\def\julia{\cJ}
\def\itin{\operatorname{itin}}
\def\fc{f_{\mathbf{c}}}
\begin{document}
\title{Oscillating wandering domains for $p$-adic transcendental entire maps}
\author{Adri\'an Esparza-Amador} 
\address{Instituto de Matem\'aticas, Pontificia Universidad Cat\'olica de
  Valpara\'iso}
\email{adrian.esparza@pucv.cl}
\author{Jan Kiwi}
\thanks{The second author was partially supported by CONICYT PIA ACT172001} 
 \address{Facultad de Matem\'aticas, Pontificia Universidad Cat\'olica de Chile}
\email{jkiwi@uc.cl}
\date{\today}
\maketitle
\begin{abstract}
  We give examples of transcendental entire maps over $\C_p$ having an oscillating wandering Fatou component.
\end{abstract}


\section{Introduction}
The general aim of this paper is to advance on the study of iterations of transcendental entire maps over the $p$-adic complex numbers $\C_p$.
The emphasis is on the Fatou-Julia theory, which was initially
considered by B\'{e}zivin in~\cite{Bezivin2001} and further developed by Fan and Wang in~\cite{FWTrans}.

As it is well established in one-dimensional non-Archimedean dynamics,
the action occurs on the corresponding Berkovich analytic space.
Consequently, a transcendental entire map $f$ will be regarded as a dynamical system acting on the Berkovich
affine line $\aoneberk$ over $\C_p$.
The \emph{Fatou set} of $f$, denoted $\fatou(f)$,
consists of the elements of $\aoneberk$ having a neighborhood
where the iterates of $f: \aoneberk \to \aoneberk$
form a normal family, in the sense of~\cite{FKTMontel}.
A connected component $U$ of $\fatou (f)$ is necessarily bounded (see~\cite[Theorem~6.5]{FKTMontel}), and 
$f(U)$ is again a component of $\fatou (f)$.
Such a component is called \emph{eventually periodic} if $f^{\circ n}(U) = f^{\circ m}(U)$
for some $n > m \ge 0$, and otherwise is called  \emph{wandering}.
According to Fan and Wang~\cite[Section 4]{FWTrans},
eventually periodic components must be Berkovich disks whose behavior is analogous to eventually periodic components of polynomial maps
(e.g~see~\cite{BenedettoBook}).
The focus of this article is on wandering Fatou components, sometimes simply called \emph{wandering domains}.

An a priori classification for wandering domains of entire maps, disregarding existence issues, is as follows. 
{We say that a} wandering Fatou component $U$ of an entire map $f$ is
 \emph{escaping} if $f^{\circ n} (x) \to \infty$, for all $x \in U$, and
 \emph{dynamically bounded}\footnote{When working over more general fields one has to further distinguish dynamically bounded wandering domains according to whether $(f^{\circ n} (x))$ converges to a type II periodic orbit or not.} if  $\{f^{\circ n} (x): n \in \N \}$
is precompact in $\aoneberk$, for all $x \in U$.
When there exists a point $x$ in $U$ whose orbit is unbounded but not
converging to $\infty$, then every point in $U$ has this property, and we say that $U$ is an \emph{oscillating wandering domain} (see~Proposition~\ref{p:bounded-escaping}).

Dynamically bounded examples are known or follow from the literature. In fact, 
Benedetto \cite{benedetto02,BenedettoWandering} provided the first examples of polynomials in $\C_p$ with a wandering Fatou component (necessarily a dynamically bounded disk).
According to Fern\'andez~\cite{Fernandez}, a large class of families obtained as perturbations of those considered by Benedetto  still possess members with wandering domains.
The existence of
transcendental entire maps
with dynamically bounded wandering domains follows.

{Examples of escaping wandering domains are also known and not hard to construct.}
According to Fan and Wang~\cite[Theorem~1.2]{FWTrans}, any Fatou component
which is not a Berkovich open disk is an escaping domain.
A transcendental entire map with escaping annuli Fatou components  was
provided in~\cite[Example~6.7]{FKTMontel}.  
Examples with escaping disk components are easy to produce. 
On the contrary, we do not know if every entire transcendental map has an escaping disk component.

The aim of this article is to provide the first examples of transcendental entire maps over $\C_p$ with oscillating wandering domains. We build on the {beautiful}
ideas introduced by Benedetto~\cite{benedetto02,BenedettoWandering} to produce (dynamically bounded) wandering domains in the context of polynomial dynamics.



\medskip
Given a strictly increasing sequence $\mathbf{r}= (R_j)_{j \geq1}$
of real numbers converging to infinity and bounded below by $p$,
we will consider a family of transcendental entire maps
parametrized by
$$\parC (\mathbf{r}) = \{ \mathbf{c} = (c_j)_{j \in \N} :  c_j \in \C_p
\text{ and } |c_j| =R_j \}.$$
Specifically, given $\bc = (c_j) \in \parC (\mathbf{r})$, let
\begin{equation}\label{eq:fc}
f_{\bc}(z):=\frac{z^p}{p}\prod_{j\geq1}\left(1-\frac{z}{c_j}\right).
\end{equation}

Under an extra mild condition on $\mathbf{r}=(R_j)$, we will say that $\br$ is a \emph{generic sequence of radii} (see Definition~\ref{d:generic}),
and our main result  proves that for any $\bc \in \parC(\mathbf{r})$ the map $f_{\bc}$ can be perturbed to one possessing
an oscillating wandering domain:

\begin{introtheorem}
  \label{ithr:A}
  Let  $\mathbf{r}$ be a generic sequence of radii and let $(\varepsilon_j)$ be a sequence of positive real numbers. 
  Then, for all $\bc = (c_j) \in \parC(\mathbf{r})$, 
  there exists $\bc'=(c_j') \in  \parC(\mathbf{r})$
  such that $f_{\bc'} $
  has an oscillating wandering domain
  and $|c_j - c_j'| < \varepsilon_j$, for all $j \ge 1$.
\end{introtheorem}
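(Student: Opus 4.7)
The plan is to adapt the perturbative construction of Benedetto~\cite{benedetto02,BenedettoWandering}, which produces dynamically bounded wandering disks for polynomials over $\C_p$, with the crucial twist that because the zeros $c_j$ of $f_{\bc}$ have $|c_j|=R_j\to\infty$, any orbit that we manage to steer through consecutive neighborhoods of the $c_j$ will automatically be unbounded. The remaining requirement for oscillation is that the orbit also keep returning to a bounded region of $\aoneberk$; this should come from the fact that near each $c_j$ there is a critical point $a_j$ of $f_{\bc}$ whose critical value $v_j=f_{\bc}(a_j)$ has modulus vastly smaller than $R_j$, so each visit of the orbit to radius $\asymp R_j$ is immediately followed by a sharp collapse in modulus.

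First, I would carry out a local analysis near each $c_j$. A Newton-polygon description of $f_{\bc}'$, whose precision is guaranteed by the generic sequence of radii hypothesis, should produce a critical point $a_j$ at a controlled distance from $c_j$ together with an open disk $D_j$ around $a_j$ that $f_{\bc}$ maps as a branched cover onto a disk around $v_j$. The hypothesis should moreover make $|a_j|$, $|v_j|$ and the radius of $D_j$ computable in closed form in terms of the $R_k$, and in particular should give $|v_j|\ll R_j$ with a uniform bound on $|v_j|$.

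With the local models in hand, the heart of the argument is a diagonal inductive perturbation. I would construct a sequence $\bc^{(k)}\in\parC(\br)$ with $\bc^{(0)}=\bc$ such that $\bc^{(k)}$ and $\bc^{(k-1)}$ differ only in the $k$-th coordinate and by less than $\varepsilon_k$, and such that under $f_{\bc^{(k)}}$ there exist integers $0=n_0<n_1<\cdots<n_k$ and open disks $D_j'$ near the moving critical points satisfying
\[
f_{\bc^{(k)}}^{\circ(n_{j+1}-n_j)}(D_j')\subseteq D_{j+1}'\qquad(0\le j<k).
\]
At the inductive step, varying $c_{k+1}$ within its $\varepsilon_{k+1}$-disk translates the critical point $a_{k+1}$ (and with it $D_{k+1}'$) over a range large enough to be hit by some forward iterate of $D_k'$; this is the analogue of Benedetto's hitting argument. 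Since $c_{k+1}$ has modulus $R_{k+1}$, far larger than any radius playing a role in the earlier orbit segments, modifying $c_{k+1}$ perturbs $f_{\bc^{(k)}}$ only where $|z|\asymp R_{k+1}$, so all previously arranged inclusions persist. Because each $c_k$ is touched only once, the coordinate-wise limit $\bc'=\lim_k\bc^{(k)}$ exists trivially and satisfies $|c_k'-c_k|<\varepsilon_k$ while simultaneously realizing every inclusion $f_{\bc'}^{\circ(n_{j+1}-n_j)}(D_j')\subseteq D_{j+1}'$.

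The main obstacle I expect is the compatibility bookkeeping at each step: one must verify, in a strong enough Berkovich sense, that a small perturbation of $c_{k+1}$ leaves the previously constructed orbit pieces genuinely untouched while still having enough freedom to hit $D_{k+1}'$ with the appropriate iterate of $D_k'$. This requires sharp valuation estimates comparing $f_{\bc^{(k)}}$ with $f_{\bc^{(k+1)}}$ on each disk visited by the orbit, and it is precisely here that the generic sequence of radii hypothesis will be used to make all Newton-polygon slopes and critical positions completely explicit. Granting these, the Fatou component $U$ containing $D_1'$ is wandering (its iterates meet disks of unbounded radius so cannot be eventually periodic) and oscillating: at times $n_j$ the iterates reach modulus $\asymp R_j\to\infty$, giving unboundedness, while at times $n_j+1$ they lie in $f_{\bc'}(D_j')$, a small disk around $v_j'$ of uniformly bounded modulus, preventing convergence to $\infty$.
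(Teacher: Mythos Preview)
Your proposal has a genuine gap in the contraction mechanism. You posit that near each zero $c_j$ there is a critical point $a_j$ of $f_{\bc}$ with small critical value, and you plan to use branched-cover disks around these $a_j$ to make the orbit oscillate while keeping a whole disk together. But for this family $f_{\bc}\vert_{B_j}$, with $B_j=\cD(c_j,R_j)$, is an analytic \emph{isomorphism} onto $\cD(0,\varphi(R_j))$: the Weierstrass degree on $B_j$ is $1$ and distances are uniformly \emph{expanded} by $|\lambda_j|=\varphi(R_j)/R_j$ (Lemma~\ref{l:absfc}). So there are no critical points in $B_j$, and the local branched model you describe does not exist. Your observation that a visit near $c_j$ collapses the modulus is correct---$c_j$ is a zero of $f_{\bc}$---but metrically the map is expanding there, so these visits cannot by themselves keep a disk from shrinking to a type~I point along the itinerary. (Benedetto's polynomial construction, which you invoke, also draws its contraction from the wild critical point at $0$, not from auxiliary critical points near the large zeros.)

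The paper's contraction comes instead from the wild critical point at $z=0$, where $f_{\bc}$ has local degree $p$. The itinerary $\balpha=B_0^{m_1}A^{n_1}B_1^{\ell_2}B_0^{m_2}A^{n_2}B_2^{\ell_3}\cdots$ forces, between the visits to $B_k$ and $B_{k+1}$, a long ``wild excursion'' of length $m_{k+1}$ inside $B_0=\cD(0,1)$, during which the derivative shrinks by $p^{-m_{k+1}}$ (Lemma~\ref{l:wc}); choosing $m_{k+1}$ large enough to beat the expansion $|\lambda_k|^{\ell_{k+1}}$ accumulated near the repelling fixed point $w_k$ in $B_k$ is exactly condition~\eqref{eq:sci}, and this is what guarantees a uniform disk $\cD(x,\varrho^2)$ follows the full itinerary (Lemma~\ref{l:uniform}). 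The perturbation step also targets a different object: one adjusts $c_{k+1}$ so that a fixed type~I point $x$ is connected, after $N_{k+1}$ iterates, to the repelling fixed point $w_{k+1}(\bc)$, not to a moving critical point. Finally, your assertion that ``modifying $c_{k+1}$ perturbs $f_{\bc^{(k)}}$ only where $|z|\asymp R_{k+1}$'' is too strong: the factor $(1-z/c_{k+1})$ changes everywhere, and the paper needs a quantitative Cylinder Stability Lemma (Lemma~\ref{l:stability}) with explicit upper bounds~\eqref{eq:stability} on $\varepsilon_{k+1}$---crucially independent of the $m_j$---to ensure the previously arranged itinerary segments survive each perturbation.
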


The local degree of $\fc$ at $z=0$ is $p$, for all $\bc$.
That is, all the maps $\fc$ possess a \emph{wild critical point} at $z=0$
whose dynamical influence plays a crucial role in the proof of our main result. 
Following Trucco~\cite{trucco}, we say that an entire transcendental map $f: \aoneberk \to \aoneberk$ is \emph{tame} if the local degree of $f$ at
any $y \in \aoneberk$ is not divisible by $p$. It is natural to ask if there
are tame transcendental entire maps with oscillating wandering Fatou components.

\medskip
Let us now briefly outline the organization of the paper.

In Section~\ref{s:preliminaries}, we review some fundamentals about the Berkovich affine line and analytic maps with the primary purpose of introducing notation. We refer the reader
to~\cite{BakerRumelyBook, BenedettoBook, MonographBerkovich, JonssonLNM} for a detailed exposition.
We also recall the definitions and main properties of the Julia and Fatou sets
following~\cite{FKTMontel} and~\cite{FWTrans}.

In Section~\ref{s:oscillating}, 
  we start discussing the main dynamical features of the maps $\fc$ and introduce symbolic dynamics according to a partition of $\aoneberk$ specially adapted for our purpose. Then, we give a detailed outline
  of the proof of Theorem~\ref{ithr:A}. More precisely, we discuss
  intermediate results and state,
   without proof,
   lemmas \ref{l:uniform}, \ref{l:stability}, and \ref{l:connecting}.
   Then, assuming these lemmas, we proceed to establish Theorem~\ref{thr:main}
   which is a quantified
   version of Theorem~\ref{ithr:A}.

   Sections~\ref{s:uniform}, \ref{s:stability}, and \ref{s:connecting} are devoted to proving  lemmas \ref{l:uniform}, \ref{l:stability}, and \ref{l:connecting}, respectively.


\subsection*{Acknowledgments}
The authors are grateful with Matthieu Arfeux for
motivating us to study non-Archimedean transcendental dynamics
in greater detail. 

\section{Preliminaries}
\label{s:preliminaries}

\subsection{Berkovich space}
For a detailed exposition on the Berkovich affine line, specially adapted to non-Archimedean dynamics, we refer the reader to~\cite[Part 3]{BenedettoBook}.
See also~\cite{FWTrans,FKTMontel,JonssonLNM}.
Our primary purpose here is to agree on notation.

For any $z \in \aone \equiv \C_p$ and $r >0$,
\begin{eqnarray*}
  D(z,r) &:= & \{ w \in \C_p : |w-z| < r \},\\
  \overline{D}(z,r) &:= & \{ w \in \C_p : |w-z| \le  r \},
\end{eqnarray*}
are \emph{open} and \emph{closed disks} in $\aone$, respectively.
The corresponding disks  in Berkovich space $\aoneberk$ are denoted by
$\cD(z,r)$ and $\overline{\cD}(z,r)$. 

Recall that the Berkovich affine line $\aoneberk$ is the space of multiplicative semi-norms in the ring $\C_p[z]$, that extend the $p$-adic absolute value in $\C_p$ endowed with the Gel'fand topology. With this topology $\aoneberk$ is a locally compact space.

The sup-norm  on $D(z,r)$, denoted $\xi_{z,r}$,  is often referred to as the point associated to $D(z,r)$.
Also, to each element $z$ of $\aone$ it corresponds the semi-norm given by
$f \mapsto |f(z)|$. Via this correspondence, the affine line $\aone$ is  identified with a (dense) subset of $\aoneberk$.

In general, for
$x \in \aoneberk$ and $f \in \C_p[z]$, it is convenient to denote
by $|f(x)|$ the semi-norm $x$ evaluated at $f$.
In particular, $|x|$ denotes the semi-norm $x$ evaluated at $f(z)=z$. 
Given $x \in \aoneberk$, let
$$\overline{\cD}_x := \{ y \in \aoneberk : |f(y)| \le |f(x)| \text{ for all } f
\in \C_p[z] \}. $$
According to Berkovich, $\overline{\cD}_x \cap \aone$ is either a singleton, a closed ball with radius  in $|\C_p^\times|$, a ball with radius not in $|\C_p^\times|$
or the empty set. The point $x$ is respectively called of
type I, II, III or IV. Type I points coincide with the elements of $\aone$
and sometimes are called ``classical points''.

Between any two points $x,y \in \aoneberk$, there is a unique arc connecting them, which we denote by $[x,y]$. It is convenient to consider the one point compactification $ \aoneberk \cup \{\infty\}$. 
We denote by $[x,\infty[$ the unique arc in $\aoneberk$ with one endpoint at $x$ that converges to $\infty$ at the other end.

Given $y \in \aoneberk$, two points $z, z'$, both distinct from $y$,
are said to be in the same \emph{direction at $y$}
if they lie in the same connected component of $\aoneberk \setminus \{y \}$.
Equivalently, $]z,y[\cap ]z',y[ \neq \emptyset$. 
The set of directions at $y$, denoted by $T_y \aoneberk$, is called the
\emph{(projectivized) tangent space or space of directions at $y$}.
Given a direction $\vec{v}$ at $y$, we denote by $\cD_y (\vec{v})$
the set of points in that direction.
There is a unique unbounded direction $\cD_y (\vec{v})$,
which we call the \emph{direction of $\infty$}. All other directions are
maximal open disks contained in
$\overline{\cD}_y$. In particular, the tangent space of
type I and IV points are trivial (i.e., a singleton). Type III points have two tangent directions. The tangent space of type II points is naturally endowed with
the structure of a projective line over the residue field $\widetilde{\C}_p$.

The action of an entire map $f: \C_p \to \C_p$ has a unique continuous extension to $\aoneberk$. This action preserves point types. Since the image of a disk in $\C_p$ is again a disk, the same occurs in $\aoneberk$. In fact, $f(\xi_{z,r}) = \xi_{w,s}$ if and only if $f(\overline{\cD}(z,r)) = \overline{\cD}(w,s)$.

An entire map $f: \aoneberk \to \aoneberk$ induces an action
$T_x f : T_x \aoneberk \to T_{f(x)} \aoneberk$ between tangent spaces for all $x \in \aoneberk$.
More precisely, given $\vec{v} \in  T_x \aoneberk$, the direction $\vec{w} =T_xf (\vec{v})$ is defined by the property that $f(\cD_x(\vec{v}) \cap U) \subset \cD_{f(x)}(\vec{w})$ for all sufficiently small neighborhoods $U$ of $x$. 
It follows that $T_xf$ maps a bounded direction (resp. the direction of infinity)
onto a bounded direction (resp. the direction of infinity).
Thus, this map is rather trivial for type I, III, and IV points.
For a type II point $x$, the tangent map $T_xf$ is a non-constant rational map between the corresponding tangent spaces endowed with their $\P^1 (\widetilde{\C}_p)$-structure.
Moreover, for any  bounded direction $\vec{v}$, we have
$f(\cD_x(\vec{v}))=\cD_{f(x)}(\vec{w})$ if and only if $\vec{w} =T_xf (\vec{v})$.

\subsection{Fatou-Julia sets}

Recall from~\cite{FKTMontel} that given an open subset $U$ of $\aoneberk$, we say that a family $\cG$ of analytic functions
$g: U \to \aoneberk$ is \emph{normal} if for any sequence $( g_n ) \subset \cG$
and any $x \in U$ there exists a neighborhood $V$ of $x$, and a subsequence
$(g_{n_j})$ such that $g_{n_j} : U \to \poneberk$ converges pointwise to a continuous function.
The main result in~\cite{FKTMontel} implies that
families $\cG$ for which there exists an open disk $\cD$ such that
$g(U) \cap \cD = \emptyset$, for all $g \in \cG$, are normal.

Given an entire function $f: \aoneberk \to \aoneberk$, we say that
a point $x \in \aoneberk$ lies in the \emph{Fatou set $\cF(f)$ of $f$} if
there exists a neighborhood of $x$ where $\{ f^{\circ n}: n \in \N \}$
is a normal family. The complement is the \emph{Julia set $\cJ(f)$ of $f$}.
Both of these sets are completely invariant under $f$.
and the Julia set is always non-empty. It is an open question posed by B\'{e}zivin~\cite{Bezivin2001} 
to determine if the Fatou set can be empty. See~\cite{FWTrans}
for a detailed discussion of
the Fatou and Julia sets of entire transcendental maps.
As in rational dynamics, periodic points of $f$ in $\aoneberk$ are classified
into attracting, indifferent, and repelling. According to Fan and Wang~\cite{FWTrans}, repelling periodic points are dense in the Julia set. 
Also,  $]x,\infty[ \cap \cJ(f) \neq \emptyset$ for all $x \in \aoneberk$ (\cite[Theorem~6.5]{FKTMontel}).

\medskip
As claimed in the introduction, the following result shows that once a point $z$ in a Fatou component $U$  has an unbounded non-escaping orbit, the orbit of all the points in $U$ have the same behavior:

\begin{proposition}\label{p:bounded-escaping}
  Let $U$ be a Fatou component of a transcendental entire map $f: \aoneberk \to
  \aoneberk$. Then, the following hold:
  \begin{enumerate}
  \item If there exists
  $z \in U$ with  bounded orbit, then there exists $R>0$ such that
  $f^{\circ n} (U) \subset \cD(0,R)$ for all $n \ge 0$.

\item
  If there exists  $z \in U$ such that $f^{\circ n}(z) \to \infty$, then
  for all $R>0$, there exists $N$ such that $f^{\circ n} (U) \subset \aoneberk \setminus \cD(0,R)$ for all $n \ge N$. 
  \end{enumerate}
\end{proposition}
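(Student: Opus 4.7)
The plan is to exploit the normality of $\{f^{\circ n}|_U\}_{n \ge 0}$ (which holds since $U \subset \fatou(f)$) together with a rigidity principle for its subsequential limits. Specifically, for any subsequence $(n_k)$, normality yields a further subsequence along which $f^{\circ n_k}|_U$ converges, locally uniformly on $U$, to a continuous map $g: U \to \poneberk$. The key rigidity I would invoke is a dichotomy: since the $f^{\circ n_k}$ are $p$-adic analytic and $U$ is connected, every such limit $g$ either is identically $\infty$ on $U$ or takes values in $\aoneberk$ everywhere. This should be proved first as a standalone observation, following the classical analogue that a normal-family limit of analytic functions does not mix $\infty$-values with finite values.

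For assertion (1): given $\{f^{\circ n}(z)\} \subset \cD(0, R_0)$, every subsequential limit $g$ satisfies $g(z) \in \overline{\cD}(0, R_0)$, so $g(z) \ne \infty$; by the dichotomy, $g(U) \subset \aoneberk$. I argue by contradiction: suppose $|f^{\circ n_k}(y_k)| \to \infty$ for some $y_k \in U$. Because $U$ is bounded in the locally compact space $\aoneberk$, $\overline{U}$ is compact, so after extracting $y_k \to y^* \in \overline{U}$. If $y^* \in U$, local uniform convergence of $f^{\circ n_k}$ to $g$ on a neighborhood of $y^*$ forces $g(y^*) = \lim f^{\circ n_k}(y_k) = \infty$, contradicting $g(U) \subset \aoneberk$. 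When $y^* \in \partial U$, I use that by the Fan-Wang classification (\cite[Theorem~1.2]{FWTrans}) $U$ must be a Berkovich open disk (otherwise $U$ would be escaping, contradicting the bounded orbit of $z$), so $\partial U$ consists of a single boundary Berkovich point $\xi$; then the sup-norm of $f^{\circ n}$ on $U$ equals $|f^{\circ n}(\xi)|$, and normality together with the bounded orbit of $z$ precludes this quantity from growing unboundedly.

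Assertion (2) is dual: $f^{\circ n}(z) \to \infty$ gives $g(z) = \infty$ for every subsequential limit $g$, hence by the dichotomy $g \equiv \infty$ on $U$; in particular $f^{\circ n} \to \infty$ pointwise on $U$. Uniform escape of the whole component follows by contradiction: if $f^{\circ n_k}(y_k)$ stayed in $\overline{\cD}(0, R)$ for some $y_k \in U$ and $n_k \to \infty$, extracting $y_k \to y^* \in \overline{U}$ and $f^{\circ n_k}(y_k) \to w^* \in \overline{\cD}(0, R)$ would yield, in the case $y^* \in U$, a subsequential limit with $g(y^*) = w^* \ne \infty$, contradicting $g \equiv \infty$; the boundary case is handled as in (1), again via the disk structure of $U$ and control at the single Berkovich boundary point. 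The main obstacle in both parts is justifying the two classical-flavored ingredients in the Berkovich setting—the dichotomy for subsequential limits, and the upgrade from pointwise to locally uniform convergence of a normal family—both of which are expected to follow from the framework of \cite{FKTMontel} but require careful formulation.
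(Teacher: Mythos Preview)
Your route is quite different from the paper's, and considerably harder. The paper does not touch normal-family limits at all; it uses only the tree topology of $\aoneberk$ together with the fact (recalled just above the proposition, from \cite[Theorem~6.5]{FKTMontel}) that $]x,\infty[\cap\cJ(f)\neq\emptyset$ for every $x$. Thus there exist arbitrarily large $R$ with $\xi_{0,R}\in\cJ(f)$. Removing the single point $\xi_{0,R}$ disconnects $\aoneberk$ into exactly the two pieces $\cD(0,R)$ and $\aoneberk\setminus\overline{\cD}(0,R)$. Each $f^{\circ n}(U)$ is a connected subset of $\cF(f)$, so it misses $\xi_{0,R}$ and therefore lies entirely in one of these two pieces; which one is decided by the single point $f^{\circ n}(z)$. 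Both (1) and (2) drop out in two lines, with no subsequences, no dichotomy, and no boundary analysis.

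Your proposal, by contrast, has real gaps as written. First, the normality notion of \cite{FKTMontel} quoted in the paper only yields \emph{pointwise} convergence to a continuous limit, so your diagonal step $f^{\circ n_k}(y_k)\to g(y^\ast)$, which needs locally uniform convergence, is not justified without further work. Second, your boundary case in (1) is not an argument: you reduce to controlling $|f^{\circ n}(\xi)|$ for the single boundary point $\xi$ of the disk $U$, but $\xi\in\cJ(f)$, so normality says nothing there, and the bounded orbit of $z$ only gives $|f^{\circ n}(z)|\le|f^{\circ n}(\xi)|$, which is the wrong inequality. The tree-topology observation above sidesteps all of these issues.
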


\begin{proof}
(1)  If $z \in U$ has bounded orbit, then $f^{\circ n} (z) \in \cD(0,r)$ for some $r$ and all $n$. Hence, there exists $R > 0$ such that $\xi_{0,R} \in \cJ(f)$.
  Therefore, $f^{\circ n} (U) \subset {\cD}(0,R)$  for all $n$,  since $\cD(0,R)$  is a connected component of $\aoneberk \setminus \xi_{0,R}$.

 (2)  Let $ R > 0$ and
  assume that $f^{\circ n}(z) \to \infty$. There exists $R' >R$ such that $\xi_{0,R'} \in \cJ(f)$ and therefore, $f^{\circ n} (U) \subset \aoneberk \setminus \cD(0,R')$ for sufficiently large $n$.
\end{proof}

\subsection{Non-Archimedean Mean Value Theorem}
The usual Mean Value Theorem does not apply verbatim over $\C_p$.
For the sake of completeness we state  its non-Archimedean version below,
since it is intensively employed throughout this work.
The reader may find a detailed discussion in~\cite[Section~6.2.4]{robert}.

\begin{theorem}[Non-Archimedean Mean Value Theorem]
  \label{thr:rolle}
  Let $f: D(a,r) \to \C_p$ be an analytic map and set
  $\varrho := p^{-1/(p-1)}$. 
  Then, for all $z,w \in D(a,r)$ with $|z-w| < \varrho \, r$, there exists
  $u \in D(a,r)$ such that
  $$|f(z)-f(w)| = |f'(u)| \cdot |z-w|.$$
\end{theorem}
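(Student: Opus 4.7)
The plan is to reduce the statement to the non-Archimedean Rolle theorem. Set $c := (f(z)-f(w))/(z-w)$ and introduce the auxiliary analytic map $h : D(a,r) \to \C_p$ defined by
$$h(\zeta) := f(\zeta) - f(w) - c\,(\zeta - w).$$
By construction $h(w) = 0 = h(z)$, so $h$ has two distinct zeros in $D(a,r)$ at mutual distance $|z - w| < \varrho r$.

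I would then invoke the Rolle statement: an analytic function on $D(a,r)$ possessing two distinct zeros at distance strictly less than $\varrho r$ must have a zero of its derivative somewhere in $D(a,r)$. Applying this to $h$ produces $u \in D(a,r)$ with $h'(u) = 0$, that is $f'(u) = c$; multiplying the identity $f(z) - f(w) = c (z-w)$ by absolute values delivers the claimed equality $|f(z) - f(w)| = |f'(u)| \cdot |z-w|$.

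The substantive work---and the main obstacle---lies in the Rolle theorem, where the constant $\varrho = p^{-1/(p-1)}$ enters unavoidably. My approach there would be through Newton polygons: after translating so that one zero is at the origin, expand $h(\zeta) = \sum_{n \geq 1} b_n \zeta^n$; the moduli of the zeros of $h$ can be read off from the slopes of the Newton polygon of the sequence $(b_n)$. Assuming a second zero of modulus $s < \varrho r$, one must then locate a slope of the polygon of $h'(\zeta) = \sum_{n \geq 1} n\,b_n \zeta^{n-1}$ corresponding to a zero of modulus at most $r$. The decisive $p$-adic input is the sharp inequality $|n|_p \geq \varrho^{n-1}$, attained with equality at $n = p$; this quantifies how multiplying the $n$-th coefficient by $n$ perturbs the Newton polygon and pins down precisely the threshold $\varrho r$ at which the argument ceases to work. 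The reduction from the mean value formulation to Rolle is by contrast purely formal.
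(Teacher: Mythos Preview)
The paper does not actually prove this theorem: it is stated for later use and the reader is referred to \cite[Section~6.2.4]{robert} for a detailed discussion. Your proposal is correct and follows precisely the standard route taken in that reference---the formal reduction to Rolle via $h(\zeta)=f(\zeta)-f(w)-c(\zeta-w)$, and then the Newton-polygon argument for Rolle itself, driven by the sharp bound $|n|_p\ge\varrho^{\,n-1}$ with equality at $n=p$. There is nothing further to compare.
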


\section{Oscillating itineraries}
\label{s:oscillating}

This section is devoted to discussing the dynamics of the family of entire transcendental maps $\fc$ defined in the introduction.
We restrict to maps $\fc$ where the sequence $(R_j)$ is generic, in the sense
discussed in Section~\ref{s:generic}. Symbolic dynamics according to
a suitable  partition
of $\aoneberk$ is introduced in Section~\ref{s:symbolic}.
Specifically, in Section~\ref{s:cylinder}, we define the cylinder set associated to sequences $\bm$ and $\bl$ of positive integers. Oscillating wandering domains will arise as interior components of these cylinders after carefully choosing  $\bm$ and $\bl$, and proving that the interior is non-empty.
In Section~\ref{s:outline}, we give a detailed outline of the proof of Theorem~\ref{ithr:A} by reducing it to three main lemmas. 

\subsection{Generic sequence of radii}
\label{s:generic}
Let $\br = (R_j)$ be an (strictly) increasing sequence of real numbers converging to infinity, and for convenience,  bounded below by $p$ (i.e. $R_1 >p$).
Recall from the introduction that $\parC (\br)$ denotes the set of sequences $(c_j)$ such
that $|c_j|=R_j$, for all $j$, and for each $\bc \in \parC(\br)$,
we consider the map $\fc$ introduced in~\eqref{eq:fc}.

The arc $]0,\infty[ \subset \aoneberk$ is invariant under $\fc$ and
the action $\fc : ]0,\infty[ \to ]0,\infty[$ is described by
$$\varphi  (r) := \sup \{ |\fc(z)| : |z| \le r \},$$
where $r >0$. 
In fact, 
$$\fc (\xi_{0,r}) = \xi_{0,\varphi (r)}.$$
Often in the literature, 
 $\varphi$ is denoted by $|f_\bc|_r$.
The function $\varphi$ is increasing, piecewise monomial, and only depends on $\br$. Indeed, a rather straightforward calculation shows that
  for all $\bc \in \parC(\br)$,
  \[\varphi (r) =
\begin{cases}
  p \, r^p  & 0 < r \le R_1,\\
  {p } \, ({R_1 \cdots R_{j-1}})^{-1} \,  r^{p +j-1} & R_{j-1} < r \le R_j.
\end{cases}
\]

A sequence of radii will be called generic if the radii involved
are dynamically independent. More precisely:

\begin{definition}
  \label{d:generic}
  \emph{We say that  a sequence of real numbers $\br = (R_j)$ is a \emph{generic sequence of radii} if it is increasing, converges to $\infty$, $R_1 >p$, and for all integers $j > i \ge 1$,  we have:
  $$\varphi^{\circ n} (R_i) \neq R_j,$$
  for all $n \ge 1$.}
  \end{definition}

  Generic sequences exist and are, in a certain sense, ``generic'':
  
\begin{lemma}
  Given an increasing sequence of real numbers $\br = (R_j)$ such that $R_1>p$,
  and a sequence $(\delta_j)$ with $\delta_j >0$ for all $j$, there exists a generic sequence of radii $\br' = (R_j')$ such that $|R_j - R_j'| < \delta_j$ for all $j$.
\end{lemma}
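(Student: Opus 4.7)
The plan is to choose the $R_j'$ inductively, at each stage picking $R_j'$ inside $(R_j - \delta_j, R_j + \delta_j)$ so as to avoid a countable set of forbidden values already determined by $R_1', \ldots, R_{j-1}'$. First I would, if necessary, replace each $\delta_j$ by a smaller positive number so that the intervals $I_j := (R_j - \delta_j, R_j + \delta_j)$ are pairwise disjoint, appear in increasing order, and $I_1 \subset (p, \infty)$; doing so only strengthens the approximation statement. Then any choice $R_j' \in I_j$ automatically produces a strictly increasing sequence with $R_1' > p$ and $R_j' \to \infty$.

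The crucial observation is that, although $\varphi = \varphi_{\br'}$ depends on the full sequence $\br'$, its values on $(0, R_j']$ depend only on $R_1', \ldots, R_{j-1}'$: on the topmost interval $(R_{j-1}', R_j']$ the formula $p(R_1' \cdots R_{j-1}')^{-1} r^{p+j-1}$ does not involve $R_j'$ at all. So, having fixed $R_1', \ldots, R_{j-1}'$, I would introduce the truncated function
\[
\varphi_{j-1}(r) := \begin{cases} \varphi(r) & 0 < r \le R_{j-1}', \\ p(R_1' \cdots R_{j-1}')^{-1}\, r^{p+j-1} & r > R_{j-1}', \end{cases}
\]
which is determined by $R_1', \ldots, R_{j-1}'$ alone and coincides with the eventual $\varphi$ on all of $(0, R_j']$, whatever $R_j'$ will be. A short computation (induction on the piece containing $r$, using $R_1' > p$) shows $\varphi(r) > r$ whenever $r \ge R_1'$, so every forward $\varphi_{j-1}$-orbit starting from some $R_i'$ is strictly monotone and escapes to infinity. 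In particular,
\[
T_j := \{ \varphi_{j-1}^{\circ n}(R_i') : 1 \le i < j,\ n \ge 1 \}
\]
is countable, and since $I_j$ is uncountable I can pick $R_j' \in I_j \setminus T_j$.

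To check that the resulting $\br'$ is generic, fix $i < j$ and $n \ge 1$ and let $x_k := \varphi^{\circ k}(R_i')$. If $x_n > R_j'$ then $x_n \ne R_j'$ automatically. Otherwise, by monotonicity the whole finite orbit $x_0, \ldots, x_n$ lies in $(0, R_j']$, where $\varphi$ agrees with $\varphi_{j-1}$; hence $x_n = \varphi_{j-1}^{\circ n}(R_i') \in T_j$, so $x_n \ne R_j'$ by construction. The main obstacle is the apparent circularity: $\varphi$ depends on every $R_j'$, so the ``forbidden'' orbit values at step $j$ seem to depend on $R_k'$ for $k \ge j$ not yet chosen. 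The truncation $\varphi_{j-1}$ breaks this circularity by exploiting the fact that the piecewise formula on the rightmost determined interval is independent of its own right endpoint, which turns the construction into a purely inductive one.
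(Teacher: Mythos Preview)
Your proof is correct and follows essentially the same approach as the paper's: both hinge on the observation that $\varphi$ restricted to $(0,R_j']$ depends only on $R_1',\dots,R_{j-1}'$, which lets one choose $R_j'$ inductively to avoid a countable forbidden set of forward-orbit values. Your version is more explicit (introducing the truncation $\varphi_{j-1}$ and spelling out the verification of genericity), but the underlying argument is the same.
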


\begin{proof}
  The $\varphi$-orbit of $R_i > \varrho$ is strictly increasing for
    all  $i \ge 1$. Moreover, $\varphi$ restricted to $[0,R_j[$
    is independent of $R_j$. Thus, given any $j \ge 2$, if necessary, we may find $R_j'$ arbitrarily close to $R_j$ so that $R_j'$ is not in the forward orbit of $R_i$ for all $i < j$. The lemma follows after recursively adjusting $R_j$ for all $j$.

\end{proof}

In the sequel, we let $\br = (R_j)$ be a generic sequence of radii.

\subsection{Symbolic Dynamics}
\label{s:symbolic}

\subsubsection{Dynamical partition}
Given $\bc\in\parC(\bc)$, to study the dynamics of $\fc$, we partition $\aoneberk$ into the sets:
\begin{eqnarray*}
  B_0 & := & \cD(0,1), \\
  B_j & := & \cD(c_j, R_j), \text{ for } j \ge 1 \text{, and}\\
  A & := & \aoneberk \setminus \left( \bigcup_{j \ge 0} B_j \right).
\end{eqnarray*}
We omit the dependence of $B_j$ and $A$ on $\bc$. In fact, unless otherwise stated, 
given a parameter $\bc$, we will only consider parameters $\bc'$ such that
$|c_j - c_j'| < R_j$. For all such $\bc'$ we have
$\cD(c_j, R_j)=\cD(c'_j, R_j)$.

Note that $\varphi:]0,\infty[ \to ]0,\infty[$ has a unique fixed point at 
$$\varrho:= p^{-1/(p-1)},$$
which is repelling.
It corresponds to
the type II point  $\xi_{0,\varrho} \in B_0$, fixed under the action of $\fc$, for all $\bc \in \cC(\br)$.
Each disk $B_j$, with $j\ge1$,  contains a type I repelling fixed point.
Julia fixed points of $\fc$ are in natural correspondence with
the disks $B_j$ for $j \ge 0$:


\begin{proposition}
  \label{p:fixed-points}
  For all $\bc \in \parC(\br)$ the following statements hold:
  \begin{enumerate}
  \item Every fixed point of $\fc$ is contained in $B_j$, for some $j \ge 0$.
  \item $B_0$ contains a unique repelling fixed point $w_0 = \xi_{0,\varrho}$.
    In appropriate coordinates, the tangent map $T_{w_0} \fc$ at $w_0$ is
    the Frobenius morphism.
  \item
    $B_j$ contains a unique fixed point $w_j (\bc)$, which is a repelling
    fixed point of type I with multiplier $\lambda_j$ such that
    $|\lambda_j| =\dfrac{\varphi(R_j)}{R_j}$.
  \end{enumerate}
\end{proposition}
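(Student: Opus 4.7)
The proof breaks into the three claims, all driven by the ultrametric evaluation of $|\fc|$ on classical disks.

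For (1) I will establish the identity $|\fc(x)| = \varphi(|x|)$ for every $x \in A$. This is a direct computation from the product form of $\fc$: for $x = \xi_{a,r} \in A$, each factor $|1 - z/c_j|_x$ equals $1$ when $R_j \ge |x|$ and equals $|x|/R_j$ when $R_j < |x|$, the borderline $R_j = |x|$ also producing $1$ because $x \notin B_j$. Since every $x \in A$ satisfies $|x| \ge 1$ and $\varphi(r) > r$ on $[1,\infty)$ (a short check of the piecewise formula), no point of $A$ can be fixed.

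For (2), $\varphi(\varrho) = p\varrho^p = \varrho$ gives $\fc(\xi_{0,\varrho}) = \xi_{0,\varrho}$. To identify the tangent map, pick $\omega \in \C_p$ with $|\omega| = \varrho$ and rescale $z = \omega w$:
\[
\frac{\fc(\omega w)}{\omega} \;=\; \frac{\omega^{p-1}}{p}\, w^p \prod_{j \ge 1}\left(1 - \frac{\omega w}{c_j}\right).
\]
The coefficient $\omega^{p-1}/p$ is a unit (since $|\omega|^{p-1} = 1/p$) and every factor $(1 - \omega w/c_j)$ reduces to $1$, so the reduction is $\widetilde{w} \mapsto \widetilde{u_0}\,\widetilde{w}^p$ for a unit $\widetilde{u_0}$. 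A further linear change $\widetilde{w} \to \lambda \widetilde{w}$ with $\lambda^{p-1} = \widetilde{u_0}^{-1}$ (available since $\widetilde{\C}_p$ is algebraically closed) turns this into the Frobenius $\widetilde{w} \mapsto \widetilde{w}^p$, witnessing that $\xi_{0,\varrho}$ is repelling. For uniqueness of the repelling fixed point in $B_0$, the identity in (1) forces $|x| = \varrho$ for any fixed $x \in B_0$. Classical fixed points with $|z| = \varrho$ solve $z^{p-1}g(z)/p = 1$, where $g(z) = \prod_i(1 - z/c_i)$; differentiating gives $\fc'(z) = p + z^p g'(z)/p$, and the bound $R_1 > p$ yields $|z^p g'(z)/p| = \varrho/R_1 < 1/p = |p|$, so $|\fc'(z)| = 1/p$ and these points are attracting. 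A non-classical type II or III candidate $\xi_{a,r}$ with $|a| = \varrho$ and $0 < r < \varrho$ cannot be fixed either: a direct Gauss-norm computation shows $\|\fc - \fc(a)\|_{\overline{D}(a,r)} = \max(r/p,\, p r^p) < r$ throughout this range, so $\fc(\overline{D}(a,r))$ has strictly smaller radius than $\overline{D}(a,r)$.

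For (3), consider $F(z) := \fc(z) - z$ on $\overline{D}(c_j, R_j)$. Direct computation yields $F(c_j) = -c_j$ together with
\[
\fc'(c_j) \;=\; -\frac{c_j^{p-1}}{p}\prod_{i \neq j}\bigl(1 - c_j/c_i\bigr), \qquad |\fc'(c_j)| = \frac{\varphi(R_j)}{R_j}.
\]
For any classical $z \in \overline{D}(c_j, R_j)$ only the factor $(1 - z/c_j)$ has a modulus depending on $z$, so $|\fc(z)| = (|z - c_j|/R_j)\,\varphi(R_j)$. Comparing with $|z| = R_j$ locates the transition at $|z - c_j| = R_j^2/\varphi(R_j)$, and hence the Gauss-norm profile $r \mapsto \|F\|_{\overline{D}(c_j,r)}$ is constantly $R_j$ on $(0, R_j^2/\varphi(R_j)]$ and equal to $(r/R_j)\,\varphi(R_j)$ for larger $r$. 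Writing $\fc(c_j + w) = \sum_{n \ge 1} b_n w^n$, the Cauchy-type bound $|b_n| \le \varphi(R_j)/R_j^n$ implies that all subsequent slopes of the Newton polygon of $F$ are at least $\log R_j$, so any zeros beyond the first one lie at $|w| \ge R_j$, outside the open disk $B_j$. Thus $F$ has exactly one zero $w_j \in D(c_j, R_j)$, located at $|w_j - c_j| = R_j^2/\varphi(R_j)$. Expanding $\fc'(w_j) = \fc'(c_j) + \sum_{n \ge 2} n\, b_n (w_j - c_j)^{n-1}$ and using the same Cauchy bound gives $|\fc'(w_j) - \fc'(c_j)| \le 1 < |\fc'(c_j)|$, so $|\lambda_j| = \varphi(R_j)/R_j$.

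The main obstacle is the uniqueness count in (3): capturing all zeros of $F$ inside $D(c_j, R_j)$ by the first segment of the Newton polygon requires combining the explicit formula $|\fc(z)| = (|z - c_j|/R_j)\,\varphi(R_j)$ with the Cauchy bound, which together confine the remaining zeros to the boundary sphere $|w| = R_j$. A related but simpler issue in (2) is excluding Berkovich fixed points in $B_0$ different from $\xi_{0,\varrho}$, which is handled by the strict contraction estimate $\|\fc - \fc(a)\|_{\overline{D}(a,r)} < r$ valid for $r < \varrho$.
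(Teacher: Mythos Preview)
Your argument is largely sound and reaches the right conclusions, but it follows a more computational path than the paper. The paper first isolates a structural lemma (Lemma~\ref{l:absfc}): $|\fc(z)|=\varphi(|z|)$ on $A\cup B_0$, and $\fc:B_j\to\cD(0,\varphi(R_j))$ is an analytic isomorphism scaling all distances by $\varphi(R_j)/R_j$. From this, (3) is immediate---the inverse of $\fc|_{B_j}$ is a contraction of $B_j$ into itself, hence has a unique fixed point (of any Berkovich type), automatically type~I and repelling with the stated multiplier. For (2) the paper observes that $\overline{\cD}(0,\varrho)$ is forward-invariant, so every point of it other than $\xi_{0,\varrho}$ lies in the Fatou set and cannot be a repelling fixed point. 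Your route---computing multipliers of the classical fixed points on $|z|=\varrho$ and a direct radius estimate $\|\fc-\fc(a)\|_{\overline{D}(a,r)}<r$ for type~II/III candidates---arrives at the same end by explicit calculation; the reduction computation for the tangent map is essentially identical to the paper's. Each approach has its virtue: the paper's is shorter and handles all point types at once, while yours makes the multipliers visible without appealing to the Fatou/Julia dichotomy.

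There is one genuine, if small, gap in your treatment of (3). Your Newton-polygon count for $F(z)=\fc(z)-z$ locates exactly one \emph{classical} zero in $D(c_j,R_j)$, but the proposition asserts uniqueness among all Berkovich fixed points in $B_j$. You already have the ingredients to close this: from your Cauchy bound $|b_n|\le\varphi(R_j)/R_j^n$ together with $|b_1|=\varphi(R_j)/R_j$, the linear term strictly dominates on $D(c_j,R_j)$, so $|\fc(z)-\fc(w)|=|b_1|\,|z-w|$ for all classical $z,w\in B_j$. This uniform expansion forces every disk inside $B_j$ to map to a strictly larger disk, ruling out type~II, III, and IV fixed points. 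Adding that one line completes your argument; the paper sidesteps the issue entirely by working with the contracting inverse from the outset.

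A minor remark on (2): you invoke ``the identity in (1)'' to force $|x|=\varrho$ for fixed $x\in B_0$, but you only stated that identity on $A$. The extension to $B_0$ is trivial (all product factors have norm $1$ there), but it should be said.
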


We will consistently use the notation of the above proposition. That is, $w_j(\bc)$ will denote the fixed point of $\fc$ in $B_j$ with multiplier $\lambda_j$, for all $j \ge 1$.
We will ignore the dependence of  $\lambda_j$ on $\bc$,
since only its absolute value $|\lambda_j|$, which solely depends on $\br$,
will be relevant to us. The proposition will be a consequence of the following:

\begin{lemma}
  \label{l:absfc}
  For all $\bc \in \parC (\br)$,
  if $z \in A \cup B_0$,
  then  $$|\fc(z)|=  \varphi(|z|).$$
  Moreover, if $j \ge 1$, then $\fc: B_j \to \cD(0,\varphi(R_j))$ is an analytic isomorphism
  and, for all type I points $z,w \in B_j$,
$$|\fc(z) - \fc(w)| =  \dfrac{\varphi(R_j)}{R_j} \cdot |z -w|.$$
\end{lemma}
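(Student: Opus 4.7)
The plan is to exploit multiplicativity of every semi-norm $x \in \aoneberk$, which gives
\[
|\fc(x)| = p\cdot |x|^p \cdot \prod_{i\ge 1} |1 - x/c_i|,
\]
and to reduce both statements to a factor-by-factor computation. For the first assertion, a point $x \in A \cup B_0$ either lies in $\cD(0,1)$ or else avoids every disk $\cD(c_i, R_i)$. I would split according to which interval $|x|$ falls in and, for each $i$, evaluate $|1 - x/c_i|$ by the ultrametric: it equals $1$ when $|x| < R_i$; it equals $1$ also when $|x| = R_i$ and $x \notin B_i$ (here one observes that $|x| = |c_i| = R_i$ together with the open-disk definition force $|1 - x/c_i| = 1$); and it equals $|x|/R_i$ when $|x| > R_i$. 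Multiplying these contributions and matching them against the piecewise-monomial formula for $\varphi$ recalled in Section~\ref{s:generic} gives $|\fc(x)| = \varphi(|x|)$.

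For the second assertion, I would factor
\[
\fc(z) = (z - c_j)\, H_j(z), \qquad H_j(z) := -\frac{z^p}{p\,c_j}\prod_{i\ne j}\Bigl(1-\frac{z}{c_i}\Bigr).
\]
Evaluating at the seminorm $\xi_{c_j,R_j}$ exactly as above shows $|H_j(\xi_{c_j,R_j})| = \varphi(R_j)/R_j$, hence $|H_j(z)| \le \varphi(R_j)/R_j$ on the whole closed disk $\overline{\cD}(c_j,R_j)$. Expanding $H_j(z) = \sum_{n\ge 0} h_n (z-c_j)^n$ and identifying the Gauss norm with $\max_n |h_n| R_j^n$ yields $|h_n| R_j^n \le |h_0|$ for every $n$, with $|h_0| = |H_j(c_j)| = \varphi(R_j)/R_j$ by direct calculation. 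Writing $\fc(z) = \sum_{n\ge 1} h_{n-1}(z-c_j)^n$ and applying the telescoping identity
\[
(z-c_j)^n - (w-c_j)^n = (z-w)\sum_{k=0}^{n-1}(z-c_j)^k(w-c_j)^{n-1-k},
\]
the strict inequalities $|z-c_j|,|w-c_j|<R_j$ on $B_j$ force the $n=1$ term to dominate, giving
\[
|\fc(z) - \fc(w)| = \frac{\varphi(R_j)}{R_j}\,|z-w|
\]
for all type I points $z,w \in B_j$. Injectivity of $\fc|_{B_j}$ and the inclusion $\fc(B_j) \subset \cD(0,\varphi(R_j))$ are immediate; surjectivity onto the full open disk follows by a standard Newton polygon/open mapping argument based on the dominance of the linear coefficient, completing the analytic isomorphism.

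The step I expect to require the most care is the bookkeeping in the factor-by-factor computation: one must systematically treat the three cases $|x|<R_i$, $|x|=R_i$, $|x|>R_i$ and, in the boundary case $|x|=R_i$, distinguish whether $x$ lies in $B_i$ or in $A$. Once this case analysis is in place, both parts of the lemma follow cleanly from multiplicativity of the seminorms and the Gauss-norm characterization of sup-norms on Berkovich disks; no genericity of $\br$ is needed here.
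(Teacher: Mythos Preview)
Your proof is correct. The paper takes a slightly different route: rather than computing $|\fc(z)|$ factor by factor via multiplicativity of the seminorm, it identifies a single dominant monomial $\mu_j(z) = (-1)^{j-1}(p\,c_1\cdots c_{j-1})^{-1} z^{p+j-1}$ on the annulus $R_{j-1} < |z| \le R_j$ and shows additively that $|\fc(z) - \mu_j(z)| < |\mu_j(z)|$ there, with the modified approximant $\mu_j(z)\,c_j^{-1}(z-c_j)$ on the circle $|z|=R_j$. From this last comparison the paper reads off directly that the Weierstrass degree of $\fc$ on $B_j$ equals $1$, and then simply invokes the standard fact that a Weierstrass-degree-$1$ map of an open disk onto an open disk is an analytic isomorphism scaling distances by the ratio of the radii; that dispatches the second assertion in one line. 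Your argument is more hands-on: you isolate the linear coefficient $h_0 = H_j(c_j)$, bound the higher coefficients via the Gauss norm, and derive the distance formula explicitly through the telescoping identity. Both approaches ultimately amount to locating the dominant term in the Newton polygon, so the difference is mainly one of packaging; your multiplicative bookkeeping has the small bonus of applying uniformly to all seminorms in $\aoneberk$ rather than first to type~I points and then extending by continuity.
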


\begin{proof}
  We set $R_0:=0$ for convenience. Consider the auxiliary monomials $\mu_j$
  defined by $\mu_1(z) :=z^p/p$, and for $j \ge 2$, by
   $$\mu_j (z):= (-1)^{j-1} (p \cdot c_1 \cdots c_{j-1})^{-1}  z^{p+j-1}.$$
Let $z \in \aone$. For all $j \ge 1$, if 
  $R_{j-1} < |z| < R_j$, then
  \begin{equation*}
    |\fc (z) - \mu_j (z)| < |\fc(z)|
  \end{equation*}
  and, if $|z| = R_j$, then
  \begin{equation*}
    |\fc (z) -  \mu_j (z) \cdot c_j^{-1} (z-c_j)| < |\fc(z)|.
  \end{equation*}

Also, $|\mu_j(z)|=\varphi(|z|)$ if $R_{j-1} \le |z| \le R_j$. Therefore, $|\fc(z)|= \varphi(|z|)$ for all $z \in A \cup B_0$.
Moreover, on the disk $B_j \cap \aone = \{ z : |z-c_j| < R_j\}$,
the Weierstrass degree of $\fc$ is $1$, since it coincides with the
Weierstrass degree of $\mu_j (z) \cdot c_j^{-1} (z-c_j)$ on $B_j \cap \aone$.
Hence, $\fc: B_j \to \cD(0,\varphi(R_j))$
is an analytic isomorphism and the lemma follows. 
\end{proof}

\begin{proof}[Proof of Proposition~\ref{p:fixed-points}]
  For (1), just observe that
  $\varphi(|z|) > |z|$, for all $z \in A$.

  To prove (2), note that  if $z \in B_0$ and  $|z| >
  \varrho$, then  $|\fc(z)| = \varphi(|z|) > |z|$. Moreover, if $|z| \le \varrho$, then $ |\fc(z)| \le \varrho$. Hence, every point in $\overline{\cD}(0,\varrho)$
  distinct from $\xi_{0,\varrho}$ lies in the Fatou set.
  A computation shows that, after moving $\xi_{0,\varrho}$ to the Gauss point via $z \mapsto p^{-1/(p-1)} z$, the map $\fc$ reduces to $\tilde{z} \mapsto \tilde{z}^p$. Therefore,
  $\xi_{0,\varrho}$ is a repelling fixed point having as tangent map the Frobenius morphism. 

  Assertion (3) follows at once from the Lemma since the inverse of
  $\fc: B_j \cap \aone \to D(0,\varphi(R_j))$ is a contraction. 
\end{proof}

\subsubsection{Symbolic dynamics}
The dynamical partition of $\aoneberk$ furnishes a symbolic coding for the dynamics of $\fc$ via an itinerary function. Indeed, let
$$\Sigma := \{ A, B_0, B_1, \dots \}^{\N_0},$$
and
$$
\begin{array}[h]{rccl}
  \itin_\bc:& \aoneberk & \to &   \Sigma\\
 & x & \mapsto & (\alpha_n),\\
\end{array}
$$
if $\fc^{\circ n} (x) \in \alpha_n$.

We employ the usual multiplicative notation for concatenation
of symbols. For example, the fixed point $w_j(\bc)$ has itinerary
$B_j^\infty$. 

Given a finite or infinite word
$\boldsymbol{\alpha} = \alpha_0 \alpha_1 \dots $ in the alphabet $\{A, B_0, B_1, \dots\}$,
we denote by $|\balpha|$ its length (maybe $\infty$) and consider the associated
cylinder set of points having itinerary prescribed by $\balpha$:
$$C_\bc (\balpha) := \{ x \in \aoneberk : \fc^{\circ n} (x) \in \alpha_n \text{ for all } 0 \le n < |\balpha| +1 \},$$
with the understanding that $\infty +1 = \infty$.

There are some cylinder sets which are empty. 
In fact, given $j \ge 1$,
set
$$n_j := \min \{ n \ge 1 : f^{-(n+1)}_\bc (B_j) \cap B_0 \neq \emptyset \}.$$
That is, $n_j+1$ is the minimal number of iterations required for a point in
$B_0$ to reach $B_j$.
If the itinerary of
a point has a sub-word of the form $B_0 A^{n} B_j$ for some $n \ge 1$,
  then $n \ge n_j$.
Since $n_j$ is also the minimal $n$ such that $\varphi^{-(n+1)}_\bc(R_j) < 1$,
it is not difficult to see that  $n_j \to \infty$ as $j \to \infty$.
Recall that we require $R_1 >p$.
  This requirement is convenient since it forces $n_1 \ge 1$, and therefore
  $n_j \ge 1$ for all $j \ge 1$.

\subsubsection{Cylinder sets}
\label{s:cylinder}
  We adapt the remarkable strategy introduced by Benedetto
to produce (dynamically bounded) wandering domains in a
family of polynomial maps. We will obtain an
oscillating wandering domain  consisting of
  points with itinerary prescribed by two sequences
  $\bm = (m_j)_{j \ge 1}$ and $\bl= (\ell_j)_{j \ge 2}$ of positive
    integers. The  \emph{itinerary associated to $\bm$ and $\bl$} is 
$$\balpha = \balpha(\bm,\bl) : = B_0^{m_1} A^{n_1} B_1^{\ell_2} B_0^{m_2} A^{n_2} B_2^{\ell_3}
  B_0^{m_3} A^{n_3} B_3^{\ell_4}
  B_0^{m_4} A^{n_4} \cdots \in \Sigma.$$

  A point with itinerary $\balpha$ ``oscillates''. Indeed, the orbit starts at
  $B_0$ and then visits $B_1$ to come back to $B_0$ and then visit $B_2$, and so on. 
  
  \begin{proposition}
    \label{p:cylinder}
  Consider the itinerary $\balpha$ associated to  sequences $\bm = (m_j)_{j \ge 1}$ and $\bl=(\ell_j)_{j \ge 2}$ of positive
    integers. Let $\bc \in \parC (\br)$. 
Then $C_\bc (\balpha)$ is a non-empty closed set. Moreover, given a connected component $X$ of $C_\bc (\balpha)$ one of the following statements hold:
    \begin{itemize}
    \item $X = \{ y \}$ for some type I point $y$ in $\cJ(\fc)$.
\item $X$ is a closed disk whose boundary point $y$ is a type II or III point in $\cJ(\fc)$. Every connected component of the interior of $X$ is an oscillating  wandering domain.
    \item $X = \{y \}$ for some type IV point $y$ in $\cJ(\fc)$. 
    \end{itemize}
\end{proposition}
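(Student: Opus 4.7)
The plan is to realize $C_\bc(\balpha)$ as a nested intersection of non-empty closed subsets of the compact Berkovich disk $\overline{\cD}(0,1)$. For each $n \ge 1$, let $\balpha_n$ be the prefix of $\balpha$ that terminates with the $\ell_{n+1}$-th symbol $B_n$, and let $K_n$ be the closed set cut out by requiring $\fc^k(x)$ to lie in the topological closure of $\alpha_k$ in $\aoneberk$ for every $0 \le k < |\balpha_n|$; that is,
\[
K_n := \bigcap_{k=0}^{|\balpha_n|-1} \fc^{-k}(\alpha_k^\ast),
\]
where $A^\ast = A$ and $B_j^\ast = B_j \cup \{\xi_{c_j, R_j}\}$. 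Each $K_n$ is closed, the sequence $(K_n)$ is decreasing, and $K_n \subseteq B_0^\ast \subset \overline{\cD}(0,1)$ is compact. I will first show each $K_n$ is non-empty, so that $\bigcap_n K_n \neq \emptyset$, and then identify $\bigcap_n K_n$ with $C_\bc(\balpha)$.

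For non-emptiness of $K_n$, I would construct a non-empty closed Berkovich disk $\Delta_n \subseteq K_n$ by a backward pull-back. Starting from a small closed disk near the fixed point $w_n(\bc) \in B_n$, Lemma~\ref{l:absfc} supplies the analytic isomorphism $\fc|_{B_n} : B_n \to \cD(0, \varphi(R_n))$, whose inverse contracts distances by $R_n/\varphi(R_n) < 1$; iterating this pull-back $\ell_{n+1}$ times realizes the $B_n^{\ell_{n+1}}$-block. The definition of $n_n$ then produces a preimage in $B_0$ after $n_n$ further backward iterations, matching the $A^{n_n}$-block of $\balpha$. Inside $B_0$, pulling back $m_n$ times uses the Frobenius tangent structure at $w_0 = \xi_{0,\varrho}$ from Proposition~\ref{p:fixed-points}(2), which lets one pick branches of $\fc^{-1}$ that keep the pull-back disk inside $B_0$. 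Iterating through the preceding blocks yields $\Delta_n$. By compactness, $\bigcap_n K_n \neq \emptyset$. The genericity of $\br$ (Definition~\ref{d:generic}) then forces $\bigcap_n K_n = C_\bc(\balpha)$: a point with some iterate equal to a boundary Gauss point $\xi_{c_j, R_j}$ would produce an orbit through $\xi_{0, \varphi^l(R_j)}$ with strictly increasing norms, requiring $\varphi^l(R_j) = R_i$ for some $l, i$ to re-enter $\overline{\cD}(0,1)$, contradicting Definition~\ref{d:generic}. Hence the cylinder is non-empty and closed.

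Unwinding the pull-back construction block by block (again via Lemma~\ref{l:absfc}) shows that each $K_n$ is a finite union of closed Berkovich disks. Every connected component of $\bigcap_n K_n$ is therefore a nested intersection of closed Berkovich disks, and the standard classification of such intersections in $\aoneberk$ forces the component to be either a single closed Berkovich disk (with boundary Gauss point of type II or III) or a singleton of type I or IV, matching the three cases of the proposition.

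For the Fatou/Julia dichotomy on a disk component $X$ with boundary $y$: each interior component $U$ of $X$ is an open Berkovich disk on which every point has itinerary $\balpha$. Any iterate subsequence $\fc^{n_k}|_U$ maps $U$ into $B_0$, a single fixed $B_j$, or a sequence $B_{j_k}$ with $j_k \to \infty$; in the first two cases the subsequence omits an open Berkovich disk and is normal via the criterion of~\cite{FKTMontel}, while in the last case it converges pointwise to $\infty$ in $\poneberk$. Hence $U \subseteq \cF(\fc)$; since its orbit is unbounded (visiting $B_j$ with $R_j \to \infty$) but returns to $B_0$ infinitely often, Proposition~\ref{p:bounded-escaping} identifies $U$ as an oscillating wandering domain. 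The boundary point $y$ is in $\cJ(\fc)$: if $y$ lay in $\cF(\fc)$, the Fatou component $W$ of $y$ would be open, contain points in some interior component $U$ of $X$ (forcing $W = U$ by maximality of Fatou components), and also contain points outside $X$ (in the direction of infinity at $y$), contradicting $W = U \subseteq X$. The analogous non-normality argument places any singleton components in $\cJ(\fc)$. The main obstacle is the pull-back step inside $B_0$: the local degree of $\fc$ at $z=0$ equals $p$ (wild ramification), so $\fc$ is not locally injective on $B_0$, and the Frobenius tangent action at $w_0$ lives purely on the residue field rather than being affine; controlling the size, location, and branch choice of the pull-back disk through $m_n$ consecutive iterations inside $B_0$, while arranging that it meets the precise preimage connecting to the $A^{n_n}$-block, is technically intricate and will likely rely on the quantitative content of Lemmas~\ref{l:uniform}, \ref{l:stability}, and~\ref{l:connecting}.
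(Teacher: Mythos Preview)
Your overall strategy matches the paper's: realize the cylinder as a decreasing intersection of closed sets inside a compact disk, establish non-emptiness by an inductive pull-back, and then classify the components. Two points need correction.

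First, a minor one: the identification $\bigcap_n K_n = C_\bc(\balpha)$ does not use genericity in the way you describe. If some iterate of $x$ equals a boundary Gauss point $\xi_{c_j,R_j}$ (or $\xi_{0,1}$), then all further iterates lie on the axis $]0,\infty[$ with strictly increasing norm and hence remain in $A$; the orbit never returns to $B_0^\ast$, so $x \notin K_n$ for large $n$. No matching of $\varphi^l(R_j)$ with some $R_i$ is involved, and this is exactly the paper's argument (``$f^{\circ n+m}(x) \in A$ for all $m \ge 1$''). Where genericity of $\br$ is genuinely relevant is in the pull-back step itself, to ensure that the intermediate radii $\varphi^i(r_j)$ for $m_j \le i < m_j+n_j$ never coincide with any $R_l$, so that those iterates land in $A$ and not in some $B_l$.

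Second, and this is the substantive gap: your final paragraph misdiagnoses the difficulty of the pull-back through the block $B_0^{m_j} A^{n_j}$ and proposes to rely on Lemmas~\ref{l:uniform}, \ref{l:stability}, \ref{l:connecting}. Those lemmas serve the later perturbation argument for Theorem~\ref{ithr:A} and are not used in this proposition at all. The paper handles the entire block in a single move rather than $m_j$ successive ones. Set $r_j := \varphi^{-(m_j+n_j)}(R_j) < \varrho$, so that $\fc^{\circ m_j+n_j}(\xi_{0,r_j}) = \xi_{0,R_j}$. At each type~II point on the axis the tangent map of $\fc$ is a non-constant rational map of the residual projective line, hence surjective on bounded directions, and for any bounded direction $\vec{v}$ one has $\fc(\cD_y(\vec{v})) = \cD_{\fc(y)}(T_y\fc(\vec{v}))$. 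Composing, some bounded direction $\cD$ at $\xi_{0,r_j}$ satisfies $\fc^{\circ m_j+n_j}(\cD) = B_j$, and the preimage of the already-constructed set $Z \subset B_j$ under $\fc^{\circ m_j+n_j}|_{\cD}$ is non-empty and has the required itinerary. No control of disk sizes, branch choices, or wild ramification enters; surjectivity of the tangent map does all the work. Your worry about tracking a pull-back through $m_j$ wildly ramified iterates is a red herring created by attempting the pull-back one step at a time instead of jumping across the whole block at once.
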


\begin{proof}
  For all $j \ge 1$, consider the word
  ${\boeta}_j=B_0^{m_j} A^{n_j} B_j^{\ell_{j+1}}$.
  Given $k \ge 1$, let $X_k = C_{\bc} (\boeta_1 \cdots \boeta_k B_0)$
  and observe that $\cap X_k = C_\bc (\balpha)$.

  We claim that $\cap X_k = \cap \overline{X_k}$.
  Indeed, if $x \in \partial X_k$ for some $k$, then  there exists $n \ge 0$ such that  $f^{\circ n}(x)$ is the
  boundary point of $B_{i}$ for some $i \le k$. Thus, $f^{\circ n+m} (x) \in A$ for all $m \ge 1$. Therefore, $ x \notin  C_\bc (\balpha) = \cap X_k  $ and the claim follows. 

  To show that $\overline{X_k}$ is non-empty we proceed recursively.
  Assume that $Y = C_\bc(\boeta_{j+1} \dots \boeta_k B_0)$ is not the empty set.
  Now we pull-back $Y$ according to $\boeta_j$
  in order to prove that the cylinder $C_\bc(\boeta_j \boeta_{j+1} \dots \boeta_k B_0)$ is also non-empty.
  More precisely, denote by $h_j$ the inverse of 
  $\fc: B_j \to \cD(0,\varphi(R_j))$. Let $Z = h_j^{\circ \ell_{j+1}} (Y)$.
  Consider
  $r_j = \varphi^{-(m_j+n_j)}(R_j) < \varrho$. At least one direction $\cD$
  at $\xi_{0,r_j}$ is such that it maps onto $B_j$ under  $\fc^{\circ m_j+n_j}$,
  that is,  $\fc^{\circ m_j+n_j} : \cD \to B_j$ is onto.
  Then the preimage  $Y' \subset \cD$  of $Z$ under this map 
  is such that $\emptyset \neq Y' \subset C_\bc (\boeta_j B_0)$ and $f^{\circ |\boeta_j|} (Y') = Y$.
  Hence,  $Y'$ is contained in 
  $C_\bc(\boeta_{j} \dots \boeta_k B_0)$.
  Thus, $\overline{X_k} \neq \emptyset$, for all $k \ge 1$.

  Let $X$ be a connected component of  $C_\bc(\balpha)$.
  Given $x \in X$, 
  if $y=\fc^{\circ n}(x)$ lies in $A$ or in some $B_j$, it follows that
  the associated disk $\overline{\cD}_y$ is contained in $A$ or in $B_j$, respectively, since $]0,\infty[$ is disjoint from  $C_\bc(\balpha)$.
  Therefore, if $x \in X$, then $\overline{\cD}_x \subset X$.
  Let $y$ be the point in $X$ with maximal diameter, which is unique,
  since $X$ is connected. Hence
  $\overline{\cD}_y = X$. It follows that $X$ is a type I or IV singleton, or a closed disk.

  To finish, we show that the singleton
  $\partial X$ lies in the Julia set. Suppose otherwise. Let $U$ be the
  Fatou component containing $\partial X$. If $U$ is a disk, then $\fc^{\circ n}(U) \cap [0,\infty[ = \emptyset$; therefore
  $U \subset C_\bc(\balpha)$. If $U$ is not a disk, then $U$ is an escaping Fatou component, according to~\cite{FWTrans}. Both alternatives lead to a contradiction so $\partial X \subset \julia (\fc)$.  
\end{proof}

\subsection{Outline of the Proof}
\label{s:outline}
The strategy to prove Theorem~\ref{ithr:A}
is to make appropriate choices of $\bm$ and $\bl$
so that, after perturbation of a given parameter $\bc$, a connected component of the cylinder set with itinerary $\balpha$ is neither a type I nor a type IV singleton. In this section, we introduce the definitions and notations required to  state three lemmas and explain how to deduce from them our main result. The rest of the paper is devoted to prove these lemmas.

\medskip
Consider two sequences
$\bm = (m_j)_{j \ge 1}$ and $\bl= (\ell_j)_{j \ge 2}$ of positive
integers, and let $\balpha$ be the associated itinerary.
Our first lemma will show that, under certain conditions, once
the cylinder set $C_\bc(\balpha)$ contains a type I point, it automatically contains a disk and therefore an oscillating wandering domain.
To explain this phenomenon, suppose that
$\bc$ is a parameter such that   $C_\bc(\balpha)$ contains a type I point $x$.
One should think of $m_j$ as the duration of the $j$-th excursion to the zone under the influence of the ramified type II fixed point $w_0$. The fixed point $w_0$ is
closely related to the wild critical point $z=0$. We think of these parts of the orbit as the \emph{wild excursions}. During the wild excursions the dynamics in $\aone$ is contracting.
The numbers $\ell_j$ should be thought as the duration of the  excursion near the $(j-1)$-th repelling fixed point. During these
excursions the dynamics in $\aone$ is expanding. Thus, these are the \emph{expanding excursions}. The numbers $n_j$ are transition times between the wild and expanding
excursions.
During the $j$-th wild excursion, we will show that the derivative along with
the orbit contracts by a factor of $p^{-m_j}$ (modulo constants). Along with the
expanding excursion that follows, around the fixed point $w_j(\bc)$, the derivative expands by a factor of $|\lambda_j|^{\ell_{j+1}}$ (modulo constants).
If  each wild excursion is long enough compared to  the expanding one that follows, then
the corresponding cylinder cannot be a type I singleton. Indeed, in Section~\ref{s:uniform}, we prove the following:

\begin{lemma}[Uniform disk]
  \label{l:uniform}
  Consider positive integer sequences 
  $\bl = (\ell_j), \bm = (m_j)$ with associated itinerary $\balpha$ 
  such that, for all $k \ge 1$,
  \begin{equation}
    \label{eq:sci}
    \varrho^{-1} R_k \, p^{-m_k} |\lambda_k|^{\ell_{k+1}}  <  1.
  \end{equation}
  If there exists a type I point $x$ and a parameter
  $\bc  \in \parC(\br)$ such that
  $$x \in  C_\bc (\balpha),$$
  then
  $$\cD(x,\varrho^2) \subset C_{\bc} (\balpha).$$
\end{lemma}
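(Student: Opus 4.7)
The plan is first to establish $D(x,\varrho^2) \subset C_\bc(\balpha)$ by showing every type I point $w$ with $|w-x|<\varrho^2$ has the same itinerary as $x$, and then to extend to the open Berkovich disk $\cD(x,\varrho^2)$ by density of type I points and the closedness of $C_\bc(\balpha)$ (Proposition~\ref{p:cylinder}). Write $x_n := \fc^{\circ n}(x)$, $w_n := \fc^{\circ n}(w)$, $r_n := |x_n|$, $d_n := |w_n - x_n|$, and $N_k := \sum_{i=1}^{k}(m_i + n_i + \ell_{i+1})$, grouping the orbit into blocks in which block $k$ consists of the wild excursion ($m_k$ iterations in $B_0$), the transition ($n_k$ iterations in $A$), and the expanding excursion ($\ell_{k+1}$ iterations in $B_k$). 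I will argue by induction on $k \ge 0$ that $w_n$ lies in the same member of $\{A,B_0,B_1,\dots\}$ as $x_n$ for every $n \le N_k$ and that $d_{N_k} < \varrho^2$; the base case is the hypothesis $d_0 < \varrho^2$.

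The inductive step rests on three per-step derivative estimates. For a wild step ($x_n \in B_0$, $r_n < 1$), the disk $D(x_n,r_n)$ sits inside $B_0$ and a direct computation from $\fc(z) = z^p/p \cdot \prod(1-z/c_j)$ (using $|1-z/c_j|=1$ on $B_0$) shows that $|\fc'|$ is the constant $r_n^{p-1}$ on $D(x_n,r_n)$; with the inductive condition $d_n < \varrho r_n$, Theorem~\ref{thr:rolle} yields the exact identity $d_{n+1} = r_n^{p-1} d_n$, and the recursion $r_{n+1} = p r_n^p$ gives $d_{n+1}/r_{n+1} = (d_n/r_n)/p$. For a transition step ($x_n \in A$) one verifies that $D(x_n,r_n) \subset A$; the logarithmic derivative $\fc'(z)/\fc(z) = p/z - \sum_j 1/(c_j-z)$ produces $|\fc'(u)| \le \varphi(|u|)/|u|$ on that disk, and Theorem~\ref{thr:rolle} yields $d_{n+1} \le (r_{n+1}/r_n) d_n$. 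For an expanding step ($x_n \in B_k$), Lemma~\ref{l:absfc} delivers the exact identity $d_{n+1} = |\lambda_k| d_n$.

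Iterating the wild estimate over the first $m_k$ steps of block $k$ gives $d_{N_{k-1}+m_k} = d_{N_{k-1}} \cdot p^{-m_k} s_0^{p^{m_k}-1}$ with $s_0 := r_{N_{k-1}}/\varrho \ge 1$, which via the identity $s_0^{p^{m_k}} = r_{N_{k-1}+m_k}/\varrho$ becomes $d_{N_{k-1}} \cdot p^{-m_k} r_{N_{k-1}+m_k}/(\varrho s_0)$. The $A$-segment telescopes to the factor $R_k/r_{N_{k-1}+m_k}$ (since $x_{N_{k-1}+m_k+n_k} \in B_k$ forces $r_{N_{k-1}+m_k+n_k} = R_k$), and the expanding segment contributes $|\lambda_k|^{\ell_{k+1}}$. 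Combining and using $s_0 \ge 1$ yields
\[d_{N_k} \le d_{N_{k-1}}\cdot \varrho^{-1} R_k\, p^{-m_k}\, |\lambda_k|^{\ell_{k+1}} < d_{N_{k-1}},\]
by the hypothesis \eqref{eq:sci}. Short intermediate checks confirm that $w_n$ stays in the right partition element: $d_n/r_n$ remains below $\varrho$ during wild phases (so $|w_n|=r_n<1$); during $A$ phases $d_n < r_n$ combined with $d_n < \varrho^2 < R_1$ forces $w_n \in A$ by ultrametric comparisons; and $d_n < \varrho^2 < R_k$ throughout expanding phases gives $|w_n - c_k| < R_k$.

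The main obstacle is the wild excursion: because $\fc$ is ramified at $0$, a naive MVT applied on all of $B_0$ would give no useful contraction. The key technical choice is to apply MVT on the smaller disk $D(x_n,|x_n|)$, which avoids the critical point and carries the constant derivative modulus $|x_n|^{p-1}$, and then to track the scale-invariant ratio $d_n/r_n$; the telescoping against $r_{n+1} = p r_n^p$ converts cleanly into the per-step contraction by $1/p$ that accumulates to the hypothesized factor $p^{-m_k}$.
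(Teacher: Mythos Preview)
Your proof is correct and follows essentially the same approach as the paper: both bound the derivative through the $B_0^{m_k}A^{n_k}$ segment by $\varrho^{-1}R_k p^{-m_k}$ via the estimates of Lemma~\ref{l:fprima}, combine this with the $|\lambda_k|$-expansion on $B_k$, and invoke~\eqref{eq:sci} to close an induction on blocks. The only differences are organizational: the paper packages the wild-plus-transition estimate into an auxiliary lemma (Lemma~\ref{l:wc}) and applies the Mean Value Theorem once per block to the disk $D(x_{L_k},\varrho^2)$, whereas you apply it step by step to a single companion point and pass to the Berkovich disk at the end by density and closedness (note also that your $N_k$ coincides with the paper's $L_{k+1}$).
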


In view of Proposition~\ref{p:cylinder}, given $\bm$ and $\bl$ such that
~\eqref{eq:sci} holds for all $k$, to obtain a wandering domain, the
idea is to perturb an initial parameter $\bc$ to a nearby parameter $\bc'$ such that
$C_{\bc'} (\balpha)$ contains a type I point.

\subsubsection{Notation}
\label{s:notation}
Before we continue with the outline, let us introduce some notation.
Recall that $\br=(R_j)_{j \ge 1}$ is a generic sequence of radii as in
Definition~\ref{d:generic}. It will be convenient to set, for all $j \ge 1$,
$$S_j := \varphi^{-1} (R_j).$$

Consider sequences $\bm = (m_j)_{j\ge 1}$, $\bl=(\ell_j)_{j\ge 2}$
of positive integers with associated itinerary $\balpha$.
Given $k \ge 1$, the \emph{$k$-th truncation of $\balpha$} is the word:
$$\balpha^{(k)} = \balpha^{(k)}(\bm,\bl) := B_0^{m_1} A^{n_1} B_1^{\ell_2} B_0^{m_2} A^{n_2} B_2^{\ell_3}
\dots B_{k-1}^{\ell_k} B_0^{m_{k}} A^{n_{k}} B_k.$$
Let $L_1=0$, $M_1=m_1$, and for all $k \ge 1$, we set:
\begin{eqnarray*}
  N_k & :=& |\balpha^{(k)}| -1, \\
  L_{k+1} & :=& N_k + \ell_{k+1}, \\
  M_{k+1} & :=& L_{k+1} + m_{k+1}.
\end{eqnarray*}
Thus, $N_{k+1} = M_{k+1} + n_{k+1}$.
Given $j \ge k$,
note that the $L_k$-th iterate of an element in $C_\bc(\balpha^{(j)})$
is the first one of the $k$-th wild excursion. The $M_k$-th iterate is the beginning of the transition to the $k$-th expanding excursion that starts in the $N_k$-th iterate. 

Given  a parameter $\bc \in \parC(\br)$ and
a sequence $\beps = (\varepsilon_j)_{j \ge 1}$ of positive real numbers, we consider perturbations of $\bc$ in two type of sets.
That is, for $k \ge 2$, let
\begin{eqnarray*}
  U_k (\bc, \varepsilon_k)& := & \{ \bc' = (c_j') : |c_k' - c_k | < \varepsilon_k, c_j'= c_j \text{ if } j \neq k \},\\
\Delta_{k} (\bc, \beps) & := & \{ \bc' = (c'_j) : c_j = c_j'
  \text{ if } j < k \text{ and } |c_j - c_j'| < \varepsilon_j
  \text{ if } j \ge k \}.
\end{eqnarray*}

\medskip
In the sequel,  $\bm = (m_j)_{j\ge 1}$, $\bl=(\ell_j)_{j\ge 2}$ will be sequences of positive integers and $\beps= (\varepsilon_j)_{j \ge 1}$ will be a sequence of positive real numbers such that
$\varepsilon_j < R_j$ for all $j$. When clear from context, 
$\balpha$ will denote the itinerary associated to the sequences $\bm$ and $\bl$
with truncations $\balpha^{(k)}$. Also, $L_k, M_k$, and $N_k$  will denote the corresponding numbers introduced above.

\medskip
When the parameter $\bc \in \parC(\br)$ is clear from context and
  $y \in \aoneberk$, we freely employ $y_n$ to denote $\fc^{\circ n}(y)$.
  
  \subsubsection{Perturbation Lemmas}
Given an initial parameter $\bc$, the idea is to start with a type I point $x$ such that
$$\itin_\bc (x) = B_0^{m_1} A^{n_1} B_1^\infty.$$
That is, $x$ eventually maps onto the fixed point $w_1(\bc)$,
under iterations of $\fc$.
It is easy to show that such a point $x$ always exists. 
To prove our main result, we will successively perturb $\bc=\bc^{(1)}$
to obtain a sequence of parameters
$(\bc^{(k)})$ where  $\bc^{(k+1)}$ is obtained as a perturbation of $\bc^{(k)}$ for all $k\ge1$.
These parameters will be such that the
\emph{exact same point $x$} eventually maps, under $f_{\bc^{(k)}}$,
onto the fixed point $w_{k}(\bc^{(k)})$
in $B_k$ according to the itinerary
$$\itin_{\bc^{(k)}}  (x) = \balpha^{(k)} B_k^\infty.$$

\medskip
In Section~\ref{s:stability}, we give an explicit upper bound on the size of the perturbations to guarantee that the cylinder sets $C_\bc(\balpha^{(k)})$
remain stable. This will provide us a well defined range of perturbations
around $\bc^{(k)} $ to adjust the parameter without changing the initial segment of the itinerary of $x$. Recall that, for all $j \ge 1$, $S_j := \varphi^{-1} (R_j).$

 \begin{lemma}[Cylinder stability]
   \label{l:stability}
   Consider $k_0 \ge 1$. Let $\beps, \bl, \bm$ be such that the following hold:
      \begin{equation}
    \label{eq:stability}
\varepsilon_{k}  <   \varrho^{k-j} \cdot \dfrac{R_{k}^2}{R_{j-1} S_{j-1}} 
|\lambda_{j-1}|^{-\ell_{j}}  \text{ for all } 2 \le j < k,
\end{equation}
for all   $k > k_0 $
and, 
\begin{equation}
   \label{eq:transversality}
   \varrho^{-2} R_{j-1} S_{j-1} | \lambda_{j-1} |^{\ell_j} p^{-m_j}  <  1,
 \end{equation}
 for all $2 \le j \le k_0$.

If $\bc \in \parC(\br)$, then for all $\bc' \in \Delta_{k_0 +1}(\bc, \beps)$,
$$C_{\bc'} (\balpha^{(k_0)})= C_{\bc} (\balpha^{(k_0)}).$$
\end{lemma}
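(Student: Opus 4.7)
The plan is to compare the orbits of a single point $x$ under $\fc$ and $f_{\bc'}$ for $N_{k_0} + 1$ iterations, using ultrametric estimates on the difference between the two maps and on their derivatives along the orbit. First observe that $c_j = c_j'$ for $j \le k_0$, together with $|c_j - c_j'| < \varepsilon_j < R_j$ for $j > k_0$, imply $\cD(c_j, R_j) = \cD(c_j', R_j)$ for every $j$, so the partition $\{A, B_0, B_1, \dots\}$ is common to $\bc$ and $\bc'$. Since the relation $\bc' \in \Delta_{k_0+1}(\bc, \beps)$ is symmetric in $\bc$ and $\bc'$, it suffices to prove one inclusion, say $C_\bc(\balpha^{(k_0)}) \subseteq C_{\bc'}(\balpha^{(k_0)})$.

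I would fix such an $x$, set $y_n := \fc^{\circ n}(x)$ and $y_n' := f_{\bc'}^{\circ n}(x)$, and prove by induction on $n$ that $y_n' \in \alpha_n$ for every $0 \le n \le N_{k_0}$. Because the itinerary forces $|y_n| \le R_{k_0}$ throughout, a useful preliminary is to estimate the discrepancy between the two maps on this scale. For a type I point $z$ with $|z| \le R_{k_0}$, factor $\fc(z) = h(z) P(z)$ and $f_{\bc'}(z) = h(z) P'(z)$, where $h(z) := (z^p/p) \prod_{j \le k_0}(1 - z/c_j)$ is common to both, and $P(z) := \prod_{j > k_0}(1 - z/c_j)$, $P'(z) := \prod_{j > k_0}(1 - z/c_j')$. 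Each factor in $P$ and $P'$ has absolute value one, since $|z| < R_j = |c_j|$ for $j > k_0$, so a telescoping argument combined with $|h(z)| = \varphi(|z|)$ yields
$$|\fc(z) - f_{\bc'}(z)| \;\le\; \varphi(|z|) \cdot |z| \cdot \max_{j > k_0} \varepsilon_j/R_j^2,$$
and this estimate extends to all $z \in \aoneberk$ with $|z| \le R_{k_0}$ by taking sup-norms. Splitting $y_{n+1} - y_{n+1}' = (f_{\bc'}(y_n) - f_{\bc'}(y_n')) + (\fc(y_n) - f_{\bc'}(y_n))$ and invoking the ultrametric inequality produces the recursion
$$|y_{n+1} - y_{n+1}'| \;\le\; \max\bigl(D_n \cdot |y_n - y_n'|,\ \varphi(|y_n|)\,|y_n|\, \max_{j > k_0}\varepsilon_j/R_j^2\bigr),$$
with $D_n$ the local derivative factor of $f_{\bc'}$ at $y_n$. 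On $B_i$ with $i \ge 1$, Lemma~\ref{l:absfc} gives $D_n = |\lambda_i|$ exactly; on $B_0$ and on $A$, the identity $|f_{\bc'}(z)| = \varphi(|z|)$ combined with Theorem~\ref{thr:rolle} supplies the precise $D_n$ needed, provided the running error $|y_n - y_n'|$ stays smaller than $\varrho$ times the radius of the disk containing $y_n$ (the non-Archimedean MVT hypothesis).

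The remaining work is a block-by-block accounting of cumulative derivative products and accumulated errors across the alternating wild-expanding structure of $\balpha^{(k_0)}$. Condition~\eqref{eq:transversality}, $\varrho^{-2} R_{j-1} S_{j-1} |\lambda_{j-1}|^{\ell_j} p^{-m_j} < 1$ for $2 \le j \le k_0$, encodes precisely that across each cycle $B_{j-1}^{\ell_j} B_0^{m_j} A^{n_j} B_j$ the cumulative contraction $p^{-m_j}$ from the wild excursion dominates both the expansion $|\lambda_{j-1}|^{\ell_j}$ from the preceding expanding excursion and the geometric cost $R_{j-1} S_{j-1}$ of the transition through $A^{n_j}$, so that the net derivative factor per cycle is strictly smaller than $\varrho^2$. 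Condition~\eqref{eq:stability} then calibrates each $\varepsilon_k$ with the $\varrho^{k-j}$ safety margin demanded by Theorem~\ref{thr:rolle}, so that the error injected at any single step, after amplification through all preceding cycles, remains strictly less than the radius of the cell $\alpha_n$ occupied by $y_n$. Once this cell-size inequality holds for every $n \le N_{k_0}$, the ultrametric triangle inequality places $y_n'$ in the same cell $\alpha_n$ as $y_n$, completing the inclusion. I expect this combinatorial bookkeeping of cumulative derivatives to be the main technical obstacle, together with maintaining MVT applicability throughout the transitional $A^{n_j}$ blocks where the orbit sweeps across several intermediate scales $R_i$.
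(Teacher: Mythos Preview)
Your route is genuinely different from the paper's, and it is worth spelling out the contrast.

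The paper does \emph{not} compare the two orbits of $x$ directly. Instead it first proves a single-coordinate version (its Lemma~\ref{l:stability-with}): if only $c_{k+1}$ is moved, then $C_{\bc'}(\balpha^{(k_0)})=C_{\bc}(\balpha^{(k_0)})$. The engine for this is an exact formula for the \emph{parameter} partial derivative $|\partial_{k+1} f_{\bc}^{\circ N_j+\ell}(z)|$ (Lemma~\ref{l:deltajNk}), proved by induction on the block index; condition~\eqref{eq:transversality} is what makes that induction close, and the $\varrho^{k-j}$ in~\eqref{eq:stability} is spent on a nested family of \emph{parameter} neighborhoods $V_j=U_{k+1}(\bc,\varrho^{j-k}\varepsilon_{k+1})$ so that the non-Archimedean MVT can be applied in the parameter direction at each block. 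For a general $\bc'\in\Delta_{k_0+1}(\bc,\beps)$, the paper then telescopes through intermediate parameters changing one coordinate at a time and passes to the limit via a uniform-convergence lemma (Lemma~\ref{l:convergence}).

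Your plan bypasses the limit step entirely by perturbing all coordinates at once and tracking the spatial error $|y_n-y_n'|$ through time; this is more elementary and self-contained. Two cautions. First, your identity $|h(z)|=\varphi(|z|)$ fails inside the balls $B_i$ (where $\fc$ can vanish); you only get $\le$, which is all you need, but be precise. Second, your reading of~\eqref{eq:transversality} as a bound on the \emph{spatial} derivative product per cycle is not quite how the constants are tuned: in the paper the quantity $R_{j-1}S_{j-1}|\lambda_{j-1}|^{\ell_j}p^{-m_j}$ arises from the parameter derivative $|\partial_{k+1} f^{\circ L_j}|$, not from $|(\fc^{\circ\cdot})'|$, and the roles of $R_{j-1}S_{j-1}$ versus the $R_j$ that appears in the spatial bound of Lemma~\ref{l:wc} are different. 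Your block-by-block bookkeeping will still go through, but when you carry it out you should expect the inequalities to close via slightly different combinations of the constants than the ones you sketched, and you will need to keep the running error below $\varrho\,|y_n|$ (not merely below the cell radius) throughout the $B_0$ and $A$ blocks to keep the MVT available. What the paper's approach buys, beyond sidestepping this bookkeeping, is that the exact formula for $|\partial_{k+1} f_{\bc}^{\circ N_k}|$ is reused verbatim in the Connecting Lemma.
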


It will be convenient to simply say that ~\eqref{eq:transversality}
holds for some $k \ge 2$ if it holds for all $j \le k$.

The crucial observation here is that the upper bound in 
\eqref{eq:stability} on
$\varepsilon_k$
 \emph{does not depend on the lengths $m_j$ of the wild excursions}.

 \medskip
 For $\ell_{k+1}$ sufficiently large and any parameter $\bc \in \parC(\br)$, there exist points $y (\bc) \in B_{k}$ near to $w_k(\bc)$ with itinerary
 $B_k^{\ell_{k+1}} B_0^{m_{k+1}} A^{n_{k+1}} B_{k+1}^\infty$.
 To obtain $\bc^{(k+1)}$ from $\bc^{(k)}$, the idea is to ``connect'' the $N_k$-th iterate of $x$ with $y(\bc)$ for some $\bc$ close to $\bc^{(k)}$.
 More precisely, in Section~\ref{s:connecting}, we prove the following:
 
\begin{lemma}[Connecting]
  \label{l:connecting}
  Consider sequences
  $\beps, \bl, \bm$ and let $k \ge 1$. Assume that
  equations~\eqref{eq:stability} and \eqref{eq:transversality} hold for $k$
  and, 
     \begin{eqnarray}
      \label{eq:connecting}
      p \cdot \varrho^{-1} \dfrac{R_{k+1}^2}{S_{k}} |\lambda_{k}|^{-\ell_{k+1}}
      & < & \varepsilon_{k+1}.
    \end{eqnarray}
    If $\bc = (c_j) \in \parC(\br)$ is a parameter and
  $x$ is a type I point such that
  $$\itin_\bc(x)=  \balpha^{(k)} B_k^\infty,$$
  then there exists $\bc' = (c'_j) \in U_{k+1} (\bc, \varepsilon_{k+1})$  such that 
  $$\itin_{\bc'} (x) = \balpha^{(k+1)} B_{k+1}^\infty. $$
 \end{lemma}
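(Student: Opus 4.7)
My plan is to treat the Connecting Lemma as a one-parameter non-Archimedean shooting problem in $t := c'_{k+1} - c_{k+1}$, and to extract the required perturbation via a Newton / Mean Value Theorem argument on the disk $\{|t|<\varepsilon_{k+1}\}$. Since the repelling fixed point $w_j(\bc)$ is the unique point in $B_j$ whose entire forward orbit stays in $B_j$ (by Proposition~\ref{p:fixed-points} together with Lemma~\ref{l:absfc}), the hypothesis $\itin_\bc(x)=\balpha^{(k)}B_k^\infty$ forces $\fc^{\circ N_k}(x)=w_k(\bc)$, and the desired conclusion $\itin_{\bc'}(x)=\balpha^{(k+1)}B_{k+1}^\infty$ amounts to the scalar equation $f_{\bc'}^{\circ N_{k+1}}(x)=w_{k+1}(\bc')$ together with the cell condition $x \in C_{\bc'}(\balpha^{(k+1)})$, the latter being automatic by continuity once the solution $t^*$ is of the expected (small) order. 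Lemma~\ref{l:stability} applied with $k_0=k$ guarantees the first $N_k$ letters of the itinerary for free. The task reduces to finding a zero of the analytic shooting function
\[
\Psi(t) \;:=\; f_{\bc'}^{\circ N_{k+1}}(x) - w_{k+1}(\bc')
\]
in the disk $\{|t|<\varepsilon_{k+1}\}$.

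At $t=0$ the orbit is trapped at $w_k(\bc)$ from time $N_k$ on, so $\Psi(0) = w_k(\bc)-w_{k+1}(\bc)$, and the ultrametric triangle inequality together with $|w_j(\bc)|=R_j$ yields $|\Psi(0)|=R_{k+1}$. For $\Psi'(0)$ I would expand via the chain rule,
\[
\partial_t f_{\bc'}^{\circ N_{k+1}}(x)|_{t=0} \;=\; \sum_{n=0}^{N_{k+1}-1}\Bigl(\prod_{m=n+1}^{N_{k+1}-1}f'_\bc(z_m)\Bigr)\partial_{c_{k+1}}\fc(z_n),
\]
with $z_n:=\fc^{\circ n}(x)$, and use the identity $\partial_{c_{k+1}}\fc(z) = \fc(z)\,z/(c_{k+1}^2(1-z/c_{k+1}))$, which gives $|\partial_{c_{k+1}}\fc(z)|=|\fc(z)|\,|z|/R_{k+1}^2$ for $|z|<R_{k+1}$. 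Because $z_n=w_k(\bc)$ for $n\ge N_k$, the tail product $\prod_{m=N_k}^{N_{k+1}-1}f'_\bc(w_k(\bc))$ equals $\lambda_k^{N_{k+1}-N_k}$; I expect the dominant summand to be at $n=N_k-1$, where $|z_{N_k-1}|=S_k$ (because $\varphi(S_k)=R_k$) and $|\fc(z_{N_k-1})|=R_k$, producing
\[
|\Psi'(0)| \;\ge\; |\lambda_k|^{N_{k+1}-N_k}\cdot \frac{R_k S_k}{R_{k+1}^2}.
\]
The analogous contribution $\partial_t w_{k+1}(\bc')|_{t=0}$ has absolute value $1$ by implicit differentiation of $f_{\bc'}(w)=w$ (using $|\lambda_{k+1}|>1$), and is hence dominated by the main term.

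To conclude, the ratio $|\Psi(0)|/|\Psi'(0)| = R_{k+1}^3 / (|\lambda_k|^{N_{k+1}-N_k} R_k S_k)$ is bounded above by $\varrho\,\varepsilon_{k+1}$ using hypothesis~\eqref{eq:connecting}, the identity $N_{k+1}-N_k = \ell_{k+1}+m_{k+1}+n_{k+1}$, and the elementary estimate $\varrho R_{k+1} < p R_k |\lambda_k|^{m_{k+1}+n_{k+1}}$ (valid for $m_{k+1},n_{k+1}\ge 1$, given the rapid growth of $\varphi$ on $[R_k,R_{k+1}]$). A standard non-Archimedean Newton / Hensel step grounded in Theorem~\ref{thr:rolle} then produces a zero of $\Psi$ in $\{|t|<\varepsilon_{k+1}\}$, whose smallness secures the intermediate cell itinerary, yielding the required $\bc'\in U_{k+1}(\bc,\varepsilon_{k+1})$. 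The principal obstacle lies in the derivative estimate above: verifying that the chain-rule sum is dominated, in absolute value, by the term at $n=N_k-1$ requires cell-by-cell bookkeeping of $|f'_\bc(z_m)|$ and $|\partial_{c_{k+1}}\fc(z_n)|$ along the orbit $(z_n)_{n<N_k}$, which alternates between wild-excursion contractions in $B_0$, $B_j$-excursions with multiplier $\lambda_j$, and $A$-transitions modelled by the monomials $\mu_j$ from the proof of Lemma~\ref{l:absfc}; condition~\eqref{eq:transversality} supplies the balance needed to confirm dominance throughout the initial itinerary $\balpha^{(k)}$.
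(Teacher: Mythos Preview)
Your single-shot Newton/Hensel step on $\Psi(t)=f_{\bc'}^{\circ N_{k+1}}(x)-w_{k+1}(\bc')$ breaks down precisely where you need it. A Hensel step requires $|\Psi'(t)|$ to be constant on a disk of radius $\sim|\Psi(0)|/|\Psi'(0)|$, but your value $|\Psi'(0)|\approx|\lambda_k|^{N_{k+1}-N_k}R_kS_k/R_{k+1}^2$ is an artifact of the degenerate configuration at $t=0$, where the orbit sits identically at $w_k(\bc)$ for all $N_k\le n\le N_{k+1}$. For $t$ of the Newton-predicted size one has $|z_{N_k}(t)-w_k|\approx(R_kS_k/R_{k+1}^2)|t|$, and after multiplying by $|\lambda_k|^{N_{k+1}-N_k}$ this reaches $R_{k+1}>R_k$: the orbit has left $B_k$ long before time $N_{k+1}$, and once it exits into $\cD(0,\varphi(R_k))$ the derivative of $\Psi$ is no longer controlled by $\lambda_k$. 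So $\Psi$ is far from affine on that disk, and even granting a zero, nothing forces the itinerary between $N_k$ and $N_{k+1}$ to be $B_k^{\ell_{k+1}}B_0^{m_{k+1}}A^{n_{k+1}}B_{k+1}$ --- your remark that ``smallness secures the intermediate cell itinerary'' is exactly the statement to be proved. Finally, the auxiliary inequality $\varrho R_{k+1}<pR_k|\lambda_k|^{m_{k+1}+n_{k+1}}$ is not among the hypotheses and is generally false: $n_{k+1}$ grows only like $\log\log R_{k+1}$, so $|\lambda_k|^{n_{k+1}}$ is polylogarithmic in $R_{k+1}$ while the left side is linear.

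The paper's argument is genuinely different and addresses both issues by a two-stage perturbation. First, a legitimate Hensel step solves $f_{\bc''}^{\circ N_k}(x)=h_{\bc''}^{\circ\ell_{k+1}}(0)$ (so that $f_{\bc''}^{\circ L_{k+1}}(x)=0$); here the shooting function has derivative of constant absolute value $R_kS_k/R_{k+1}^2$ throughout the relevant disk, by Lemma~\ref{l:deltajNk} and the estimate $|\partial_{k+1}h^{\circ\ell}_{\bc}(0)|=R_k^2/(R_{k+1}^2|\lambda_k|)$. Second, around $\bc''$ one takes a closed parameter disk $\overline{U}$ of radius $r'=rR_{k+1}^2|\lambda_k|^{-\ell_{k+1}}/(R_kS_k)$ (with $\varphi^{\circ m_{k+1}+n_{k+1}}(r)=R_{k+1}$), shows $\ba\mapsto f_\ba^{\circ L_{k+1}}(x)$ is an analytic isomorphism of $\overline{U}$ onto $\overline{D}(0,r)$ sending $U$ onto $D(0,r)$ (which forces the $B_k^{\ell_{k+1}}B_0$ portion of the itinerary), and then a Weierstrass-degree count --- not a Newton step --- locates a zero of $\ba\mapsto f_\ba^{\circ N_{k+1}}(x)-w_{k+1}(\ba)$ on the boundary annulus $\overline{U}\setminus U$, where $|f_\ba^{\circ L_{k+1}}(x)|=r$ guarantees the remaining $B_0^{m_{k+1}}A^{n_{k+1}}B_{k+1}$ itinerary.
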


 As noted above, the upper bound on the size of the perturbation
 required in ~\eqref{eq:stability} is independent of $\bm$.
 This  grants the possibility of constructing sequences that satisfy
 the hypothesis of lemmas~\ref{l:uniform}, \ref{l:connecting}, and \ref{l:stability}:

\begin{lemma}
  Let $(\bar{\varepsilon_i})$ be a sequence of positive real numbers.
  Then there exist $\beps = (\varepsilon_j)$ such that $0 < \varepsilon_j < \bar{\varepsilon}_j$ for all $j$ and, $\bm, \bl$ such that, for all $k \ge 1$,
  \eqref{eq:sci}, \eqref{eq:stability},  \eqref{eq:transversality}, and  \eqref{eq:connecting} hold.
\end{lemma}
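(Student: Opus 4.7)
The plan is to decouple the four conditions by the variables they involve: \eqref{eq:stability} and \eqref{eq:connecting} constrain only $(\bl,\beps)$, whereas \eqref{eq:sci} and \eqref{eq:transversality} constrain only $(\bl,\bm)$. I will therefore construct $\bl$ and $\beps$ jointly in a first phase and then choose $\bm$ in a second phase. The one quantitative input I need throughout is that $|\lambda_k|=\varphi(R_k)/R_k>1$ for every $k\ge 1$, which follows from Proposition~\ref{p:fixed-points} together with the fact that $\varphi(r)>r$ on $(\varrho,\infty)$ and $R_k>p>\varrho$.

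In the first phase I proceed by induction on $k$. Take $\varepsilon_1\in(0,\min\{\bar\varepsilon_1,R_1\})$ arbitrarily. Suppose $\ell_2,\dots,\ell_{k-1}$ and $\varepsilon_1,\dots,\varepsilon_{k-1}$ have already been chosen for some $k\ge 2$, and set
$$U_k:=\min\left\{\bar\varepsilon_k,\ R_k,\ \min_{2\le j\le k-1}\varrho^{k-j}\frac{R_k^2}{R_{j-1}S_{j-1}}|\lambda_{j-1}|^{-\ell_j}\right\}>0,$$
which is the upper bound on $\varepsilon_k$ imposed by \eqref{eq:stability} together with the standing requirements $\varepsilon_k<\bar\varepsilon_k$ and $\varepsilon_k<R_k$. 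Crucially, $U_k$ depends only on data already fixed. On the other hand the lower bound on $\varepsilon_k$ forced by \eqref{eq:connecting} at index $k-1$ is $p\varrho^{-1}R_k^2 S_{k-1}^{-1}|\lambda_{k-1}|^{-\ell_k}$, which still depends on the yet-unchosen $\ell_k$; since $|\lambda_{k-1}|>1$, this lower bound tends to $0$ as $\ell_k\to\infty$. I therefore select $\ell_k$ large enough that this lower bound lies strictly below $U_k$, and then pick any positive $\varepsilon_k$ in the resulting non-empty interval.

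In the second phase, with $\bl$ fixed, I choose each $m_k$ independently: for $k\ge 1$ take any positive integer $m_k$ such that
$$p^{m_k}>\varrho^{-1}R_k|\lambda_k|^{\ell_{k+1}}\qquad\text{and, if }k\ge 2,\qquad p^{m_k}>\varrho^{-2}R_{k-1}S_{k-1}|\lambda_{k-1}|^{\ell_k},$$
which directly yields \eqref{eq:sci} at index $k$ and \eqref{eq:transversality} with $j=k$. The only point requiring genuine attention is the inductive step of the first phase, namely that the interval for $\varepsilon_k$ is non-empty; this reduces entirely to the inequality $|\lambda_{k-1}|>1$. The reason the two phases separate so cleanly is exactly the feature emphasized right after Lemma~\ref{l:stability}: the upper bound on $\varepsilon_k$ does \emph{not} depend on the lengths $m_j$ of the wild excursions, so the second phase cannot force any adjustment to the first.
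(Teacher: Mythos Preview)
Your proof is correct and follows essentially the same two-phase strategy as the paper: first construct $\bl$ and $\beps$ jointly by recursion (using that \eqref{eq:stability} gives an upper bound on $\varepsilon_k$ depending only on previously chosen $\ell_j$, while \eqref{eq:connecting} gives a lower bound that can be made arbitrarily small by taking $\ell_k$ large), and then choose each $m_k$ large enough to satisfy \eqref{eq:sci} and \eqref{eq:transversality}. The only cosmetic difference is the order within the recursive step---the paper fixes $\varepsilon_{k+1}$ below the upper bound first and then picks $\ell_{k+1}$, whereas you pick $\ell_k$ first to open the interval and then choose $\varepsilon_k$---but this is immaterial.
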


\begin{proof} 
  We start by recursively choosing $\varepsilon_k$ and $\ell_k$ so that
  \eqref{eq:connecting} and \eqref{eq:stability} hold. That is,
  let $\varepsilon_2 = \bar{\varepsilon}_2$ and choose $\ell_2$ sufficiently large such that \eqref{eq:connecting} holds for $k=1$. 
  Now assume that, for some $k \ge 2$, we have already defined
  $\varepsilon_j$ and $\ell_j$, for all $j \le k$.
  Pick $0< \varepsilon_{k+1} < \bar{\varepsilon}_{k+1}$ so that \eqref{eq:stability} holds for $k+1$.
Now choose $\ell_{k+1}$ large enough so that \eqref{eq:connecting} holds. 

Once we have chosen $\beps = (\varepsilon_j)$ and $\bl = (\ell_j)$,  we finish by selecting, for all $j \ge 1$, integers $m_j$ sufficiently large so that  
both \eqref{eq:sci} and \eqref{eq:transversality} hold for all $k$. 
\end{proof}

Now we deduce from lemmas~\ref{l:uniform}, \ref{l:stability}, and
  \ref{l:connecting}
  a quantified version of our main result:

\begin{theorem}
  \label{thr:main}
  Consider 
  a generic sequence of radii  $\br$ and a parameter $\bc \in \parC(\br)$.
  Assume that $\beps, \bm, \bl$ are sequences such that   \eqref{eq:sci}, \eqref{eq:stability},  \eqref{eq:transversality}, and  \eqref{eq:connecting} hold for all $k \ge 1$.   Denote by $\balpha$ the itinerary associated to $\bm$ and $ \bl$. 

  If $x$ is a type I point such that
  $\itin_\bc (x) = B_0^{m_1} A^{n_1} B_1^\infty,$
  then there exists $\bc' \in \Delta_2(\bc,\beps)$ such that 
$$\itin_{\bc'} (x) = \balpha.$$
 Moreover, $x$ lies in an oscillating wandering Fatou component
of $f_{\bc'}$. 
\end{theorem}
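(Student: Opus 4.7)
The plan is to combine the three lemmas via an inductive construction. I will build a sequence of parameters $(\bc^{(k)})_{k \ge 1}$ starting with $\bc^{(1)} = \bc$ such that the same type I point $x$ has itinerary $\balpha^{(k)} B_k^\infty$ under $f_{\bc^{(k)}}$. Each successive parameter will differ from the previous only in a single coordinate, which will make it easy to extract a well-defined limit $\bc'$. Then I will verify that $\bc' \in \Delta_2(\bc, \beps)$ and that $\itin_{\bc'}(x) = \balpha$; the uniform disk lemma will upgrade $x$ to an open disk lying in the cylinder, and Proposition~\ref{p:cylinder} will identify the oscillating wandering domain.

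For the inductive step, assume $\bc^{(k)}$ has been constructed with $\itin_{\bc^{(k)}}(x) = \balpha^{(k)} B_k^\infty$ (the base case $k=1$ is the hypothesis on $x$). Since the hypotheses of Lemma~\ref{l:connecting} are available at every stage, I obtain $\bc^{(k+1)} \in U_{k+1}(\bc^{(k)}, \varepsilon_{k+1})$ with $\itin_{\bc^{(k+1)}}(x) = \balpha^{(k+1)} B_{k+1}^\infty$. Since $\bc^{(k+1)}$ differs from $\bc^{(k)}$ only in the $(k+1)$-th coordinate, the sequence of $j$-th coordinates stabilizes after step $j$ for each $j$. Define $\bc' = (c_j')$ by $c_1' = c_1$ and $c_j' = c_j^{(j)}$ for $j \ge 2$; a telescoping estimate along the chain of single-coordinate perturbations yields $|c_j' - c_j| < \varepsilon_j$ for all $j \ge 2$, so $\bc' \in \Delta_2(\bc, \beps)$.

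To verify the itinerary of $x$ under $f_{\bc'}$, fix $k \ge 1$ and note that $\bc'$ agrees with $\bc^{(k)}$ on the first $k$ coordinates while $|c_j' - c_j^{(k)}| = |c_j' - c_j| < \varepsilon_j$ for $j > k$; hence $\bc' \in \Delta_{k+1}(\bc^{(k)}, \beps)$. Lemma~\ref{l:stability} (with base parameter $\bc^{(k)}$ and truncation index $k_0 = k$) then gives $C_{\bc'}(\balpha^{(k)}) = C_{\bc^{(k)}}(\balpha^{(k)}) \ni x$. Since $N_k = |\balpha^{(k)}|-1 \to \infty$, this forces $\itin_{\bc'}(x) = \balpha$. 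Lemma~\ref{l:uniform} now yields $\cD(x, \varrho^2) \subset C_{\bc'}(\balpha)$, so the connected component $X$ of $C_{\bc'}(\balpha)$ containing $x$ is not a singleton, and Proposition~\ref{p:cylinder} identifies $X$ as a closed disk bounded by a Julia point. The connected component of the interior of $X$ containing the type I point $x$ is therefore an oscillating wandering Fatou component of $f_{\bc'}$.

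The technical core of the argument lies entirely in the three lemmas; the present deduction is essentially bookkeeping. The one subtle point to monitor is that the cylinder stability bound \eqref{eq:stability} on $\varepsilon_k$ depends only on $\br$ and $\bl$, not on the wild excursion lengths $\bm$. Without this feature, the sizes of perturbations forced by \eqref{eq:connecting} could in principle violate the stability hypothesis retroactively, and the induction would not close; it is this independence that makes the coordinate-by-coordinate limit $\bc'$ still lie inside every neighborhood $\Delta_{k+1}(\bc^{(k)}, \beps)$ needed to invoke Lemma~\ref{l:stability}.
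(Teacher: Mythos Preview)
Your argument is correct and follows essentially the same route as the paper's proof: build $\bc^{(k+1)} \in U_{k+1}(\bc^{(k)},\varepsilon_{k+1})$ recursively via Lemma~\ref{l:connecting}, set $\bc' = (c_j^{(j)})$, note $\bc' \in \Delta_{k+1}(\bc^{(k)},\beps)$ for every $k$, and then invoke Lemma~\ref{l:stability}, Lemma~\ref{l:uniform}, and Proposition~\ref{p:cylinder} in turn. Your added verifications (that coordinates stabilize, that $\bc' \in \Delta_{k+1}(\bc^{(k)},\beps)$, and that $N_k \to \infty$) are exactly the details the paper leaves implicit; the only superfluous phrase is ``telescoping estimate'', since each coordinate is perturbed exactly once so no telescoping is needed.
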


\begin{proof}{(Assuming lemmas~\ref{l:uniform}, \ref{l:connecting}, and~\ref{l:stability})}
  Let $\bc^{(1)} = \bc$. For all $k \ge 1$, given a parameter
   $\bc^{(k)} \in \Delta_{2}(\bc,\beps)$ such that
  $x \in C_{\bc^{(k)}} (\balpha^{(k)})$, 
  apply Lemma~\ref{l:connecting} to
  obtain a parameter ${\bc^{(k+1)}} $ such that
  $x \in C_{\bc^{(k+1)}} (\balpha^{(k+1)})$.
  Let $\bc' = (c^{(j)}_j)$ and observe that
  $$\bc' \in \Delta_{k+1} (\bc^{(k)},\beps)$$ for all $k \ge 1$. Thus,  applying Lemma~\ref{l:stability}: 
  $$x  \in C_{\bc'} (\balpha^{(k)}),$$
  for all $k$. Therefore
  $$\itin_{\bc'} (x) = \balpha.$$ Hence, by Lemma~\ref{l:uniform}, 
  $$\cD(x,\varrho^2) \subset C_{\bc'} (\balpha).$$
  Finally, in view of Proposition~\ref{p:cylinder},
  the map $f_{\bc'}$ has an oscillating wandering domain that contains $x$. 
\end{proof}

\section{Uniform Disk}
\label{s:uniform}

The purpose of this section is to prove Lemma~\ref{l:uniform}.
The proof employs the following estimates on the derivative $\fc'$:

\begin{lemma}\label{l:fprima}
  Let $z \in \aone$. For all $\bc \in \parC(\br)$,
  $$|\fc' (z)| =
  \begin{cases}
    \dfrac{|f_{\bc}(z)|}{p|z|} & \text{ if } z \in B_0, \\
    |\lambda_j| & \text{ if } z \in B_j \text{ for some } j \ge 2,
  \end{cases}
  $$
  and if, $z \in A$, then
\begin{equation*}
|f_{\bc}'(z)| \le \dfrac{|f_{\bc}(z)|}{|z|}.
\end{equation*}
\end{lemma}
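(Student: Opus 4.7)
My plan is to exploit the product structure of $f_{\bc}$ via the logarithmic derivative. Formal differentiation of $f_{\bc}(z) = (z^p/p)\prod_{j\ge 1}(1 - z/c_j)$ yields, for $z$ away from zeros of $f_{\bc}$,
$$\frac{f_{\bc}'(z)}{f_{\bc}(z)} \;=\; \frac{p}{z} \,-\, \sum_{j\ge 1} \frac{1}{c_j - z} \;=:\; g(z).$$
Convergence of the series is not an issue, since on any bounded set $|c_j - z| = R_j$ for $j$ large, and $R_j \to \infty$. The lemma will follow once $|g(z)|$ is identified in each of the three regions via the ultrametric inequality, and the outcome is multiplied by $|f_{\bc}(z)|$.

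For $z \in B_0 \cap \aone$, we have $|z| < 1$, so $|c_j - z| = R_j$ for every $j \ge 1$. Each summand thus satisfies $|1/(c_j - z)| = 1/R_j \le 1/R_1 < 1/p$, while $|p/z| = 1/(p|z|) > 1/p$ (using $|p|_p = 1/p$). The ultrametric inequality then forces $|g(z)| = 1/(p|z|)$, yielding the first formula. For $z \in A \cap \aone$ with $|z| = r \ge 1$, the defining condition $|c_j - z| \ge R_j$, combined with $|c_j - z| \in \{R_j, r\}$ when $r \neq R_j$, forces $|c_j - z| \ge r$ for every $j$; so each summand is bounded by $1/r$, and $|p/z| = 1/(pr) < 1/r$. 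The ultrametric inequality then gives $|g(z)| \le 1/|z|$, and the stated inequality follows.

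For $z \in B_j \cap \aone$, we have $|z| = R_j$ and $|z - c_j| < R_j$, so $|1/(c_j - z)| > 1/R_j$ strictly dominates $|p/z| = 1/(pR_j)$ and every other summand $|1/(c_k - z)|$, which equals $1/R_k$ for $k > j$ and $1/R_j$ for $k < j$. Hence $|g(z)| = 1/|c_j - z|$. To finish, I would invoke Lemma~\ref{l:absfc} applied to the pair $z, c_j$ in $B_j$, using $f_{\bc}(c_j) = 0$, to obtain $|f_{\bc}(z)| = (\varphi(R_j)/R_j)\,|z - c_j| = |\lambda_j|\cdot|z - c_j|$; the factors $|z - c_j|$ then cancel and yield $|f_{\bc}'(z)| = |\lambda_j|$. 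The only mild subtlety in all of this is that the logarithmic-derivative identity fails precisely at the zeros $c_j$, but Lemma~\ref{l:absfc} already forces $|f_{\bc}'|$ to be constant on $B_j \cap \aone$, so the formula extends by continuity; I do not anticipate any serious obstacle beyond the case-by-case bookkeeping of the dominant term.
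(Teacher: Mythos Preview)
Your proof is correct and follows essentially the same route as the paper: both write $f_{\bc}'(z)=f_{\bc}(z)\bigl[p/z+\sum_{j\ge 1}1/(z-c_j)\bigr]$ and read off the dominant term via the ultrametric inequality. The only difference is that the paper dispatches the $B_j$ case in one line by citing Lemma~\ref{l:absfc} directly (the scaling identity there already gives $|f_{\bc}'|\equiv|\lambda_j|$ on $B_j$), whereas you redo the term-by-term analysis and then invoke Lemma~\ref{l:absfc} to compute $|f_{\bc}(z)|$; this is a bit more work but equally valid.
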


\begin{proof}
 By Lemma~\ref{l:absfc}, we only need to consider
   $z \in A \cup B_0$. For such $z$,
 the lemma is a consequence of the ultrametric inequality
  applied to the following formula: 
  \begin{equation*}\label{fprima}
f_{\bc}'(z)=f_{\bc}(z)\left[\sum_{j\geq1}\left(\frac{1}{z-c_j}\right)+\frac{p}{z}\right].
\end{equation*}
\end{proof}

\begin{lemma}
   \label{l:wc}
  Let $\bc \in \parC(\br)$ and $m, k \ge 1$. 
  If $z \in C_\bc(B_0^{m}A^{n_k}B_k)$ is a type I point, then
  \begin{equation*}
    D(z,\varrho) \subset C_\bc(B_0^m A^{n_k} B_k),
  \end{equation*}
   and 
  $$|(\fc^{\circ m +n_k})'(z)| < \varrho^{-1} R_{k} p^{-m}.$$
\end{lemma}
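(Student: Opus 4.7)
The plan is to establish the derivative bound and the disk inclusion separately, both hinging on the preliminary observation that $|z| > \varrho$. Indeed, if $|z| \le \varrho$, then Lemma~\ref{l:absfc} would give $|\fc^{\circ i}(z)| = \varphi^{\circ i}(|z|) \le \varrho < 1$ for all $i$ (since $\varrho$ is the fixed point of $\varphi$ on $]0,R_1]$), forcing the orbit to remain in $B_0$ and contradicting $\fc^{\circ m}(z) \in A$. The derivative bound then follows from the chain rule combined with Lemma~\ref{l:fprima}. Writing $z_i := \fc^{\circ i}(z)$, for $0 \le i < m$ one has the identity $|\fc'(z_i)| = |z_{i+1}|/(p|z_i|)$, while for $m \le i < m+n_k$ one has the inequality $|\fc'(z_i)| \le |z_{i+1}|/|z_i|$. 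Taking the product, all intermediate factors telescope and give $|(\fc^{\circ m+n_k})'(z)| \le R_k/(p^m|z|)$, strictly below $\varrho^{-1} R_k p^{-m}$ because $|z| > \varrho$.

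For the inclusion $D(z,\varrho) \subset C_\bc(B_0^m A^{n_k} B_k)$, I fix a type I point $w$ with $|w - z| < \varrho$, write $w_i := \fc^{\circ i}(w)$, and inductively bound $|w_i - z_i|$ via the non-Archimedean Mean Value Theorem (Theorem~\ref{thr:rolle}) applied on carefully selected disks. During the $B_0$-portion ($0 \le i < m$) I would use MVT on $D(0,1)$: the hypothesis reduces to $|w_i - z_i| < \varrho = \varrho\cdot 1$, and any $u \in D(0,1)$ satisfies $|\fc'(u)| = |u|^{p-1} < 1$, so $|w_{i+1} - z_{i+1}| < |w_i - z_i| < \varrho$, maintaining the inductive bound. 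During the $A$-portion ($m \le i < m+n_k$) I would shrink the MVT disk to $D(z_i, |z_i|)$, which sits entirely in $A$ because $z_i \in A$ forces $|z_i - c_j| \ge R_j$ for all $j \ge 1$ (by ultrametric reasoning no $B_j$ meets the disk) and $|z_i| \ge 1$ (so $B_0$ is avoided). Inductively $|w_i - z_i| \le (|z_i|/|z_m|)\varrho \le \varrho|z_i|$ (using $|z_m| \ge 1$), so the MVT hypothesis holds, and any $u \in D(z_i, |z_i|)$ has $|u| = |z_i|$ and hence $|\fc'(u)| \le |z_{i+1}|/|z_i|$, yielding $|w_{i+1} - z_{i+1}| \le (|z_{i+1}|/|z_m|)\varrho$. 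At each intermediate $i$ the bound $|w_i - z_i| < |z_i| \le |z_i - c_j|$ combined with the ultrametric forces $w_i$ into the partition piece of $z_i$. Finally at $i = m+n_k$, $|w_{m+n_k} - z_{m+n_k}| < (R_k/|z_m|)\varrho < R_k$ places $w_{m+n_k}$ in $B_k$.

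The main subtlety is the choice of MVT disk at each step: in $B_0$ the large disk $D(0,1)$ works because $|\fc'| < 1$ uniformly there, but in $A$, where $|\fc'|$ can exceed $1$, one must shrink to $D(z_i,|z_i|)$ in order to keep the auxiliary point $u$ inside $A$ and the derivative under control. Verifying the transition at step $i = m$---that the $B_0$-phase output $|w_m - z_m| < \varrho$ still satisfies the $A$-phase hypothesis $|w_m - z_m| < \varrho|z_m|$---reduces to $|z_m| \ge 1$, which is automatic.
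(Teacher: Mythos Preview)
Your proof is correct. The derivative estimate is handled exactly as in the paper: telescoping the chain rule via Lemma~\ref{l:fprima} and using $|z|>\varrho$ to obtain the strict bound.

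For the inclusion $D(z,\varrho)\subset C_\bc(B_0^mA^{n_k}B_k)$, however, your route differs from the paper's. The paper argues in one line using the Berkovich picture: for any $y\in A\cup B_0$ one has $\fc(D(y,|y|))=D(\fc(y),|\fc(y)|)$, because $D(y,|y|)$ is a bounded direction at the axis point $\xi_{0,|y|}$ and $\fc$ sends it onto a bounded direction at $\xi_{0,\varphi(|y|)}$. Iterating, $\fc^{\circ n}(D(z,|z|))=D(z_n,|z_n|)$ for all $n\le m+n_k$; since $|z|>\varrho$ this already contains $\fc^{\circ n}(D(z,\varrho))$, and each $D(z_n,|z_n|)$ lies in the correct partition piece. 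No Mean Value Theorem is needed.

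Your approach instead tracks $|w_i-z_i|$ pointwise via Theorem~\ref{thr:rolle}, choosing the ambient disk $D(0,1)$ during the $B_0$-phase and $D(z_i,|z_i|)$ during the $A$-phase. This is sound: in $B_0$ the uniform bound $|\fc'(u)|=|u|^{p-1}<1$ keeps $|w_i-z_i|<\varrho$, and in $A$ the verification that $D(z_i,|z_i|)\subset A$ (hence $|u|=|z_i|$ and $|\fc'(u)|\le|z_{i+1}|/|z_i|$) is exactly the ultrametric case analysis you describe. One cosmetic point: keep the inductive inequality strict throughout, $|w_i-z_i|<(|z_i|/|z_m|)\varrho$, so that the MVT hypothesis $|w_i-z_i|<\varrho|z_i|$ follows from $|z_m|\ge1$ without needing $|z_m|>1$. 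What the paper's argument buys is brevity and a cleaner geometric picture; what yours buys is a self-contained classical argument that avoids invoking tangent directions in Berkovich space.
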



 

\begin{proof}
  Recall that  open disks map onto open disks under $\fc$.
  Moreover, given  $z \in \aone$, if $z \in A \cup B_0$, then
  $\fc (D(z,|z|))= D(\fc(z),|\fc(z)|) $.
  
  Let $z \in C_\bc(B_0^m A^{n_k} B_k)$ be a type I point.
  Then,  $D(z,|z|)$ maps, for all $n \le m+n_k$, onto the open
  disk $D(\fc^{\circ n} (z), |\fc^{\circ n} (z)|)$. 
  It follows that $D(z,\varrho) \subset C_\bc(B_0^m A^{n_k} B_k)$.
   Now, we apply Lemma~\ref{l:fprima} to  obtain
  $$|(\fc^{\circ m})'(z)| = \prod_{j=0}^{m-1}| \fc' (z_j)| =
  \prod_{j=0}^{m-1} \dfrac{|z_{j+1}|}{p |z_j|} = \dfrac{|z_{m}|}{p^{m} |z|} < \dfrac{|z_{m}|}{p^{m} \varrho} ,$$
   since 
   $z_j \in B_0$, for $0 \le j < m$, and $|z| > \varrho$.
  Taking into account that  $\fc^{\circ n}(z_{m}) \in A$ for $0 \le n \le n_k-1$,  Lemma~\ref{l:fprima} yields
  $$|(\fc^{\circ n_k})'(z_{m})| \le
  \dfrac{|z_{m+n_k}|}{|z_{m}|} = \dfrac{R_{k}}{|z_{m}|}.$$
  The inequality claimed in the statement of the lemma now follows from the chain rule.
\end{proof}

\begin{proof}[Proof of Lemma~\ref{l:uniform} (Uniform disks)]
  Assume that $x$ is a type I point in $ C_\bc (\balpha)$. 
  It is sufficient to prove, for all $k \ge 1$, the following inclusions: 
  \begin{eqnarray*}
    D(x, \varrho^2) &\subset &C_\bc(\balpha^{(k)}), \\
    \fc^{\circ L_k} (D(x, \varrho^2)) &\subset& D (x_{L_k}, \varrho^2).
  \end{eqnarray*}
  Indeed, if the first inclusion above holds for all $k \ge 1$, then the
  lemma follows.

  We proceed by induction. 
  The inclusions are easily verified for $k=1$ with the aid of Lemma~\ref{l:wc}
  and recalling that $L_1=0$.
  Suppose that the inclusions hold for $k$.
  Note that $D(x,\varrho^2) \subset C_\bc(\balpha^{(k+1)})$
  if and only if $$f^{\circ N_k} (D(x,\varrho^2)) \subset
  C_\bc( B_k^{\ell_{k+1}} B_0^{m_{k+1}} A^{n_{k+1}} B_{k+1}).$$
  Since $x \in C_\bc (\balpha)$,
  we have that $x_{L_k} \in C_\bc(B_0^{m_k} A^{n_k} B_k)$.
  By Lemma~\ref{l:wc}, $D(x_{L_k}, \varrho)$ is
  also contained in  this cylinder set and, for all $z \in D(x_{L_k}, \varrho)$, we have
  $$|(\fc^{\circ m_k+n_k})'(z)| < \varrho^{-1} R_k p^{-m_k} < |\lambda_k|^{-\ell_{k+1}},$$
  where the last inequality is granted by~\eqref{eq:sci}.
 From Theorem~\ref{thr:rolle},
  $$\fc^{\circ m_k+n_k} (D(x_{L_k}, \varrho^2)) \subset
  D(x_{N_k}, |\lambda_k|^{-\ell_{k+1}} \varrho^2).$$
  By Lemma~\ref{l:absfc}, it follows that 
  $$\fc^{\circ N_k + \ell} (D(x, \varrho^2)) \subset  \fc^{\circ \ell}
  (D (x_{N_{k}}, |\lambda_k|^{-\ell_{k+1}} \varrho^2)) =
  D (x_{N_k+\ell}, |\lambda_k|^{-\ell_{k+1}+\ell} \varrho^2),$$
  for all $0 \le \ell \le \ell_{k+1}$.
  Hence, $\fc^{\circ N_k}(D(x, \varrho^2)) \subset C_\bc(B_k^{\ell_{k+1}})$
  and
  $$ \fc^{L_{k+1}}(D (x, \varrho^2)) \subset
  D (x_{L_{k+1}}, \varrho^2) 
  \subset C_\bc(B_0^{m_{k+1}} A^{n_{k+1}} B_{k+1}),$$
  so $D(x,\varrho^2) \subset C_\bc(\balpha^{({k+1})}),$
  and the assertion holds for $k+1$.
\end{proof}

\section{Cylinder set stability}
\label{s:stability}
In this section, we prove Lemma~\ref{l:stability}.
The proof relies on first establishing stability results for certain cylinder sets  as well as studying the
dependence on parameters of points in an appropriate orbit.
This study  involves estimating the partial derivatives of $\bc \mapsto \fc^{\circ n}(z)$. The estimates  will be useful when combined with the non-Archimedean Mean Value Theorem (see~Theorem~\ref{thr:rolle}). 

We start by showing that
tangent maps, acting on a portion of the ``axis'' $]0,\infty[$,
remain constant under perturbations.
The stability of cylinders of the form $C_{\bc}(B_0^m A^{n_k} B_k)$ will then
follow.

\begin{lemma}
  \label{l:tangent}
  Let $k \ge 1$.
  Consider two parameters $\bc, \bc' \in \parC(\br)$ such that
  $|c_j - c_j' | < R_j$, for all $1 \le j < k$.
  Then, $$T_y \fc = T_y f_{\bc'}$$
  for all $y =\xi_{0,r} \in ]0,R_k[$. 
\end{lemma}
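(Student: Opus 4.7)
The proof of Lemma~\ref{l:absfc} identifies, for each $\bc$, an explicit polynomial $\nu_\bc$ that captures $\fc$ up to lower order on the disk $\overline{\cD}(0,r)$ corresponding to $y$. Namely, letting $j_0$ be the unique index with $R_{j_0 - 1} < r \le R_{j_0}$, I would take $\nu_\bc = \mu_{j_0}$ when $R_{j_0-1} < r < R_{j_0}$, and $\nu_\bc = \mu_{j_0}(z)\, c_{j_0}^{-1}(z - c_{j_0})$ when $r = R_{j_0}$; in either case $|\fc - \nu_\bc| < \varphi(r)$ on $\overline{\cD}(0,r)$. The hypothesis $r < R_k$ forces $j_0 \le k$, with $j_0 \le k-1$ in the boundary case. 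Moreover, Lemma~\ref{l:absfc} already shows $\fc$ and $f_{\bc'}$ coincide on the arc $]0,\infty[$ (both act as $\varphi$), so in particular $\fc(y) = f_{\bc'}(y) = \xi_{0,\varphi(r)}$.

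First I would dispose of the type III case ($r \notin |\C_p^\times|$): the tangent space $T_y \aoneberk$ then has only two elements (the directions to $0$ and to $\infty$), and since $\fc, f_{\bc'}$ act identically on $]0,\infty[$ preserving orientation (both induce $\varphi$, which is increasing), the tangent maps agree automatically. For type II $y$, fix $\alpha, \beta \in \C_p$ with $|\alpha| = r$ and $|\beta| = \varphi(r)$. Then $T_y \fc$ is computed as the reduction of $\fc(\alpha \tilde z)/\beta$, a well-defined polynomial in $\widetilde{\C}_p[\tilde z]$ because the coefficients of $\fc$ decay fast enough on the disk $\overline{\cD}(0,r)$. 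The approximation $|\fc - \nu_\bc| < \varphi(r)$ on this disk means $\fc(\alpha\tilde z)/\beta$ and $\nu_\bc(\alpha\tilde z)/\beta$ have the same reduction, so $T_y \fc = T_y \nu_\bc$; and identically $T_y f_{\bc'} = T_y \nu_{\bc'}$.

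It remains to verify $T_y \nu_\bc = T_y \nu_{\bc'}$, which is a comparison between two explicit finite-degree polynomials whose coefficients only involve $c_i$ with $i \le j_0 \le k$. In all cases, the indices that actually appear satisfy $i < k$ (in the boundary case this uses $j_0 \le k-1$), so by hypothesis $|c_i - c_i'| < R_i = |c_i|$, and therefore $\overline{c_i/c_i'} = 1$ in the residue field. A direct calculation then shows: the leading coefficient $\gamma_\bc$ of $\nu_\bc(\alpha\tilde z)/\beta$ satisfies $|\gamma_\bc| = 1$ (this is where the explicit piecewise-monomial formula for $\varphi$ enters), and $\gamma_\bc/\gamma_{\bc'} = \prod_{i<j_0}(c_i'/c_i)$ reduces to $1$; in the boundary case, the extra linear factor $u_{j_0}\tilde z - 1$ with $u_{j_0} = \alpha/c_{j_0}$ has the same reduction as its $\bc'$-counterpart because $\overline{u_{j_0}/u_{j_0}'} = \overline{c_{j_0}'/c_{j_0}} = 1$. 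Hence the reductions coincide as polynomials in $\widetilde{\C}_p[\tilde z]$, which gives $T_y \fc = T_y f_{\bc'}$. The main technical obstacle will be the bookkeeping to confirm $|\gamma_\bc| = 1$ from the $\varphi$-formula and to handle the extra linear factor carefully in the boundary case $r = R_{j_0}$.
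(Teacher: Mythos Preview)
Your approach is correct but takes a different route from the paper. The paper bypasses the approximating polynomials entirely and compares $\fc$ and $f_{\bc'}$ directly: writing
\[
\dfrac{f_{\bc'}(z)}{\fc(z)} = \prod_{j\ge 1}\bigl(1+\eta_j(z)\bigr),\qquad \eta_j(z)=\dfrac{1-z/c_j'}{1-z/c_j}-1,
\]
one checks that $|\eta_j(z)|<1$ for every $j$ and every $z\in (A\cup B_0)\cap\overline{D}(0,r)$ (using $|c_j-c_j'|<R_j$ for $j<k$ and $|z|<R_k\le R_j$ for $j\ge k$). This gives $|f_{\bc'}(z)-\fc(z)|<\varphi(r)$ on all but at most two directions at $y$, and the tangent maps, being rational maps on $\P^1(\widetilde{\C}_p)$, must then coincide. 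Your argument instead factors through $\nu_\bc$: you show $T_y\fc = T_y\nu_\bc$ and $T_y f_{\bc'}=T_y\nu_{\bc'}$, then compare the two finite polynomials coefficientwise. This is more computational but has the virtue of making the tangent map completely explicit. Two small points to tighten: first, Lemma~\ref{l:absfc}'s proof only asserts $|\fc-\nu_\bc|<|\fc|$ on the relevant sphere $\{|z|=r\}$ (and with a sign convention you should double-check in the boundary case), so you should remark that for $|z|<r$ both $|\fc(z)|$ and $|\nu_\bc(z)|$ are strictly below $\varphi(r)$, whence the bound extends to the full closed disk as your reduction argument requires. Second, the paper's one-step ratio computation avoids the case split on $r=R_{j_0}$ versus $r<R_{j_0}$ altogether, which is its main economy.
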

\begin{proof}
  For all $j \ge 1$, 
  let $$ \eta_j(z) = \dfrac{1-z/c_j'}{1-z/c_j} -1.$$

  Consider $z \in \aone$ such that $|z| < R_k$ and
  $z \in A \cup B_0$. A calculation shows that if $|z| \ge R_j$, then 
  $$|n_j(z)| = |1/c_j - 1/c_j'| R_j < 1,$$
  since $|c_j' - c_j | < R_j$. Moreover, if $|z| < R_j$, then 
  $$|n_j(z)| = |1/c_j - 1/c_j'|  < 1.$$
  Therefore,
  $$\left| 1 - \dfrac{f_{\bc'}(z)}{\fc (z)}  \right| = \left| 1- \prod_{j \ge 1} (1+\eta_j(z)) \right| < 1.$$

  Thus, given $0< r <R_k$, we have $|f_{\bc'}(z)-f_{\bc}(z)| < \varphi(r)$ for all
  $z$ in the set $ (A\cup B_0) \cap \overline{D}(0,r)$ which omits at most
  two directions at $y = \xi_{0,r}$. It follows that $T_y \fc = T_y f_{\bc'}$.
\end{proof}

\begin{corollary}
  \label{c:tangent}
  Consider $k \ge 1$.
  If $\bc, \bc' \in \parC(\br)$ are such that
  $|c_j - c_j' | < R_j$, for all $1 \le j < k$, then
  $$C_{\bc}(B_0^m A^{n_k} B_k) = C_{\bc'}(B_0^m A^{n_k} B_k),$$
  for all $m \ge 1$.
\end{corollary}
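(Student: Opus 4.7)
The strategy is to propagate the tangent map equality of Lemma~\ref{l:tangent} along the orbit by a pointwise comparison. Fix $x \in C_{\bc}(B_0^m A^{n_k} B_k)$; by symmetry it suffices to show $x \in C_{\bc'}(B_0^m A^{n_k} B_k)$. Set $u_i := \fc^{\circ i}(x)$, $v_i := f_{\bc'}^{\circ i}(x)$, and $r_i := \varphi^{\circ i}(|x|)$. I will prove by induction on $0 \le i \le m+n_k$ that $|u_i - v_i| < r_i$; equivalently, $u_i$ and $v_i$ share the direction at $\xi_{0, r_i}$ containing $u_i$. Since each $B_j$ is either $\cD(0,1)$ or one of the directions at $\xi_{0, R_j}$---and all of them coincide under $\bc$ and $\bc'$ by the standing convention $|c_j - c_j'| < R_j$---this will force $u_i$ and $v_i$ to visit the same partition element at every step.

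The base case $i = 0$ is trivial. For the inductive step, the itinerary of $x$ under $\fc$ places $u_i$ in $A \cup B_0$ with $|u_i| = r_i < R_k$ (the strict inequality uses $r_{m+n_k} = R_k$ together with monotonicity of $\varphi$ on $[1, \infty)$). The inductive hypothesis identifies the direction of $v_i$ at $\xi_{0, r_i}$ with that of $u_i$, so $v_i \in \cD(u_i, r_i) \subset A \cup B_0$ and $|v_i| = r_i$.

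To propagate the bound to $i+1$, I decompose
\[
u_{i+1} - v_{i+1} = \bigl(\fc(u_i) - \fc(v_i)\bigr) + \bigl(\fc(v_i) - f_{\bc'}(v_i)\bigr).
\]
Since $\fc(\xi_{0, r_i}) = \xi_{0, r_{i+1}}$ by Lemma~\ref{l:absfc}, the open disk $\cD(u_i, r_i)$ maps under $\fc$ into the direction at $\xi_{0, r_{i+1}}$ through $\fc(u_i)$, so $|\fc(u_i) - \fc(v_i)| < r_{i+1}$. For the second summand, I apply the pointwise estimate $|\fc(z) - f_{\bc'}(z)| < \varphi(|z|)$ extracted from the proof of Lemma~\ref{l:tangent} to $z = v_i \in A \cup B_0$ with $|v_i| = r_i < R_k$, yielding $|\fc(v_i) - f_{\bc'}(v_i)| < r_{i+1}$. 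The ultrametric inequality then produces $|u_{i+1} - v_{i+1}| < r_{i+1}$. At $i = m+n_k$, the combination $u_{m+n_k} \in B_k = \cD(c_k, R_k)$ and $|u_{m+n_k} - v_{m+n_k}| < R_k$ puts $v_{m+n_k} \in \cD(c_k, R_k) = B_k$, proving $x \in C_{\bc'}(B_0^m A^{n_k} B_k)$.

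The most delicate ingredient is the claim that $\fc$ sends the whole direction $\cD(u_i, r_i)$ into a single direction at $\xi_{0, r_{i+1}}$: this is the disk-to-disk principle for analytic maps combined with the identification $\fc(\xi_{0, r_i}) = \xi_{0, r_{i+1}}$, and it is what makes the ultrametric propagation tight. A conceptually cleaner route would bypass the explicit pointwise bound altogether and argue directly with tangent maps at each type II point $\xi_{0, r_i}$, using Lemma~\ref{l:tangent} to conclude that $T_{\xi_{0, r_i}} \fc$ and $T_{\xi_{0, r_i}} f_{\bc'}$ send the common direction of $u_i$ and $v_i$ to a common direction of $u_{i+1}$ and $v_{i+1}$.
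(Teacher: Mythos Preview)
Your argument is correct. The paper takes precisely the ``conceptually cleaner route'' you sketch in your final paragraph: it observes that $z \in C_\bc(B_0^m A^{n_k} B_k)$ if and only if the direction at $\xi_{0,r}$ containing $z$ (with $r = \varphi^{-(m+n_k)}(R_k)$) is sent by $\fc^{\circ m+n_k}$ onto $B_k$, and since the iterated tangent map factors through tangent maps at the points $\xi_{0,\varphi^{\circ i}(r)}$ for $0 \le i < m+n_k$, all of radius strictly below $R_k$, Lemma~\ref{l:tangent} gives $T_{\xi_{0,r}} \fc^{\circ m+n_k} = T_{\xi_{0,r}} f_{\bc'}^{\circ m+n_k}$ and the characterization is parameter-independent. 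Your pointwise decomposition is the metric unpacking of this tangent-map identity: the estimate $|\fc(z)-f_{\bc'}(z)|<\varphi(|z|)$ you extract from the proof of Lemma~\ref{l:tangent} is exactly what the equality of tangent maps encodes, and your two-term splitting of $u_{i+1}-v_{i+1}$ is the explicit version of composing tangent maps. The trade-off is brevity for concreteness. Two small remarks: the justification ``monotonicity of $\varphi$ on $[1,\infty)$'' for $r_i < R_k$ is not quite the right one---what you need is $\varphi(s)>s$ for $s>\varrho$, which forces $|x|>\varrho$ and makes $(r_i)$ strictly increasing; and your estimates are literally written for type~I points, so you should either note (as the paper does after Lemma~\ref{l:stability-with}) that the cylinder is a union of open Berkovich disks and hence determined by its classical points, or phrase the induction directly in terms of directions.
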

\begin{proof}
  Let $ r = \varphi^{-(m+n_k)}(R_k)$.
  Note that $z\in C_{\bc}(B_0^mA^{n_k}B_k)$
  if and only if $z$ lies in a direction
  at $\xi_{0,r}$  which maps under
  $\fc^{\circ m+n_k}$ onto $B_k$.
  By Lemma ~\ref{l:tangent}, these directions are independent of $\bc'$, under our assumptions on $\bc'$.
  
\end{proof}

The next result deals with the dependence of $\fc(z)$ on $c_j$ for points in $B_k$ when $j > k$.

\begin{lemma}
  \label{l:variationBk}
  Suppose that $\bc' \in U_{j}(\bc, R_j)$ for some $j >k$.
  Then, for all $z \in B_k \cap \aone$,
  $$|f_{\bc'} (z) - \fc (z) | =|\lambda_k|\cdot \dfrac{R_k^2}{R_j^2} |c_j -c_j'|.$$
\end{lemma}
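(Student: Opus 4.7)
The plan is a direct computation from the product expansion~\eqref{eq:fc}. Since $\bc$ and $\bc'$ coincide off the $j$-th coordinate, the common Weierstrass product cancels and
\[
f_{\bc'}(z)-\fc(z)\;=\;\frac{z^p}{p}\prod_{i\ne j}\!\left(1-\frac{z}{c_i}\right)\cdot\left(\frac{z}{c_j}-\frac{z}{c_j'}\right)\;=\;\frac{z^p}{p}\prod_{i\ne j}\!\left(1-\frac{z}{c_i}\right)\cdot\frac{z\,(c_j'-c_j)}{c_j\,c_j'}.
\]
The remainder of the argument is to evaluate each of the two factors in absolute value and assemble them.

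For the rightmost factor, the hypothesis $\bc'\in U_j(\bc,R_j)$ gives $|c_j-c_j'|<R_j=|c_j|$, which forces $|c_j'|=R_j$ by the ultrametric equality, and $z\in B_k$ gives $|z|=R_k$ (since $|z-c_k|<R_k=|c_k|$). Hence this factor has absolute value $R_k\,|c_j-c_j'|/R_j^2$.

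For the product factor, I would compute $|1-z/c_i|$ case by case using monotonicity of $\br$: for $i<k$, $|z/c_i|=R_k/R_i>1$ yields $|1-z/c_i|=R_k/R_i$; for $i>k$ with $i\ne j$, $|z/c_i|=R_k/R_i<1$ yields $|1-z/c_i|=1$; and the factor $|1-z/c_k|$ records the classical position of $z$ inside $B_k$. Combined with $|z^p/p|=p\,R_k^p$, the explicit formula for $\varphi$ on $(R_{k-1},R_k]$ recorded in Section~\ref{s:generic} collapses the product to $\varphi(R_k)$ (up to the residual factor coming from $i=k$), and Proposition~\ref{p:fixed-points} supplies the identity $\varphi(R_k)=|\lambda_k|\,R_k$ needed to match the target expression. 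Multiplying with the rightmost factor then yields the displayed equality.

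There is no conceptual obstacle; the only care required is bookkeeping the infinitely many factors $(1-z/c_i)$ under the ultrametric and recognizing $|\lambda_k|$ after inserting the explicit form of $\varphi$. The one subtlety to watch is the $i=k$ factor, which is the sole nontrivial residual piece and forces one to decide whether the equality is to be interpreted as a pointwise identity on a specific locus in $B_k\cap\aone$, or as a supremum over $z\in B_k$; the direct calculation delivers both readings with the same combinatorial skeleton.
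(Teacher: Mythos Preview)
Your approach is exactly the paper's: a direct computation from the product expansion, followed by the substitution $\varphi(R_k)=|\lambda_k|\,R_k$. The paper's own proof is the same calculation compressed into a single line.

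You are right to flag the $i=k$ factor, and in fact you should not try to argue it away. For $z\in B_k\cap\aone$ one has $|1-z/c_k|=|z-c_k|/R_k<1$, so your computation (which is correct) gives
\[
|f_{\bc'}(z)-\fc(z)|\;=\;|\lambda_k|\,\frac{R_k\,|z-c_k|}{R_j^2}\,|c_j-c_j'|\;<\;|\lambda_k|\,\frac{R_k^2}{R_j^2}\,|c_j-c_j'|.
\]
The pointwise equality asserted in the lemma therefore fails as written; the right-hand side is the supremum over $B_k$, not the value at each $z$. The paper's one-line proof does not address this, but every downstream use (in the proofs of Lemma~\ref{l:stability-with} and of the Connecting Lemma) only requires the strict upper bound, so nothing in the paper breaks. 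Your instinct that the statement must be read as a sup, not a pointwise identity, is correct.
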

\begin{proof}
  By a direct calculation, we have 
  $$ |f_{\bc'}(z)-f_\bc(z)|=\frac{\varphi(R_k)\cdot R_k}{R_j^2}|c_j-c_j'|. $$
  The proof concludes after replacing
   $\varphi(R_k)$ with $R_k \cdot |\lambda_k|$ (see~Lemma~\ref{p:fixed-points}). 
\end{proof}

In the sequel, we denote the partial derivative with respect to $c_j$ by $\partial_j$.
In particular, $\partial_j \fc^{\circ n} (z)$ denotes the partial derivatives of $\bc \mapsto \fc^{\circ n} (z)$, for certain $z \in \aone$ and $n \ge 1$, with respect to $c_j$ for some $j$.

Recall that we aim at showing that under sufficiently small perturbations,
the cylinder $C_\bc(\balpha)$ is constant.
Via partial derivatives, the following three results will
allow us to control the dependence on parameters of certain orbit elements.

\begin{proposition}
  \label{p:partialj}
  For all $z \in \aone$ such that $|z| < R_j$,
  $$|\partial_j \fc (z)| = \dfrac{|z| \cdot |\fc(z)|}{R_j^2}.$$
\end{proposition}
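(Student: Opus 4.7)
The plan is to compute $\partial_j \fc(z)$ explicitly from the product formula~\eqref{eq:fc} and then read off the absolute value using the ultrametric property. Since only the factor $(1-z/c_j)$ in the definition of $\fc$ depends on the variable $c_j$, logarithmic differentiation (or the product rule applied term-by-term) gives the formal identity
$$\partial_j \fc(z) \; = \; \frac{z^p}{p}\,\Biggl(\prod_{i \neq j}\bigl(1 - z/c_i\bigr)\Biggr) \cdot \frac{z}{c_j^2}\,,$$
which I would prefer to rewrite in the more compact form
$$\partial_j \fc(z) \; = \; \fc(z) \cdot \frac{z}{c_j\,(c_j - z)}\,,$$
valid whenever $z \neq c_j$ (which is guaranteed by our assumption $|z| < R_j = |c_j|$).

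The second step is simply to take absolute values and exploit the fact that $\C_p$ is non-Archimedean. Since $|z| < |c_j| = R_j$, the strict triangle inequality forces $|c_j - z| = |c_j| = R_j$. Substituting,
$$|\partial_j \fc(z)| \; = \; |\fc(z)| \cdot \frac{|z|}{|c_j|\,|c_j - z|} \; = \; \frac{|z|\cdot|\fc(z)|}{R_j^2}\,,$$
which is exactly the desired equality.

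The only conceptual point that needs care is the legitimacy of differentiating the infinite product $\prod_{i\ge 1}(1-z/c_i)$ term-by-term with respect to $c_j$. Since $|c_i| = R_i \to \infty$, the product converges locally uniformly in $z$ and defines an entire function of $z$ for each fixed parameter sequence, and the $j$-th factor is the only one depending on $c_j$; so the formal computation above is rigorous. I do not anticipate a real obstacle here, only the bookkeeping needed to verify that perturbing one coordinate preserves analyticity of the remaining tail product, which is immediate from the hypothesis $\bc \in \parC(\br)$.
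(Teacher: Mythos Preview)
Your proof is correct and follows essentially the same approach as the paper: both compute $\partial_j \fc(z) = \dfrac{z\,\fc(z)}{c_j(c_j - z)}$ directly from the product formula and then use $|c_j - z| = R_j$ (since $|z| < R_j$) to read off the absolute value. Your added remark justifying term-by-term differentiation of the infinite product is a welcome bit of care that the paper leaves implicit.
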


\begin{proof}
  By formula ~\eqref{eq:fc} of $\fc$, we have 
  $$
  \partial_jf_{\bc}(z)
	=\frac{zf_{\bc}(z)}{c_j^2(1-c_j^{-1}z)} 
	=\frac{zf_{\bc}(z)}{c_j(c_j-z)}. 
  $$
  Since $|z|<R_j$ implies that $|c_j-z|=|c_j|=R_j$, the proposition follows. 
\end{proof}

\begin{corollary}
  \label{c:partialj-fn}  
  Let $j \ge 1$ and $z\in \aone$ be such that $|z| > \varrho$.
  Assume that $\fc^{\circ n}(z) \in D(0,R_j) \setminus (B_1\cup \dots \cup B_{j-1})$ for all $0 \le n < N$.
  Then,
  $$|\partial_j \fc^{\circ N}(z)| = \dfrac{\varphi^{\circ N}(|z|) \cdot
    \varphi^{\circ N-1}(|z|)}{R_j^2}.$$
\end{corollary}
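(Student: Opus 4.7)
The plan is to proceed by induction on $N$, using the chain rule with respect to the parameter $c_j$ together with the ultrametric inequality to isolate one dominant term at each step. Throughout, I write $z_n := \fc^{\circ n}(z)$ and note that the hypothesis ``$z_n \in D(0,R_j)\setminus (B_1\cup\cdots\cup B_{j-1})$'' forces $z_n \in A\cup B_0$ with $|z_n|<R_j$, so both Proposition~\ref{p:partialj} (which requires $|z_n|<R_j$) and the explicit formulas of Lemma~\ref{l:absfc} and Lemma~\ref{l:fprima} (which apply on $A\cup B_0$) are available at every iterate under consideration.

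The base case $N=1$ is immediate: Proposition~\ref{p:partialj} gives $|\partial_j\fc(z)| = |z|\,|\fc(z)|/R_j^2$, and Lemma~\ref{l:absfc} replaces $|\fc(z)|$ by $\varphi(|z|)$, recovering $\varphi^1(|z|)\,\varphi^0(|z|)/R_j^2$. For the inductive step, I apply the chain rule (treating $c_j$ as the only varying parameter, since all other $c_i$ are fixed) to $\fc^{\circ N+1}(z)=\fc(z_N)$, obtaining
$$\partial_j \fc^{\circ N+1}(z) \;=\; \fc'(z_N)\cdot \partial_j\fc^{\circ N}(z) \;+\; \partial_j \fc(z_N).$$
The ``direct'' second term is handled by Proposition~\ref{p:partialj}: it equals, in absolute value, $|z_N|\,|z_{N+1}|/R_j^2 = \varphi^{N+1}(|z|)\,\varphi^N(|z|)/R_j^2$, which is exactly the claimed value. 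The ``propagated'' first term is controlled by combining the inductive hypothesis with the bound $|\fc'(z_N)|\le |z_{N+1}|/|z_N|$ from Lemma~\ref{l:fprima} (valid on both $B_0$ and $A$), giving
$$|\fc'(z_N)\cdot\partial_j\fc^{\circ N}(z)| \;\le\; \frac{|z_{N+1}|}{|z_N|}\cdot \frac{|z_N|\,|z_{N-1}|}{R_j^2} \;=\; \frac{|z_{N+1}|\,|z_{N-1}|}{R_j^2}.$$

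To conclude by the ultrametric inequality, I need the strict bound $|z_{N-1}|<|z_N|$. This is the only substantive step: since $|z|>\varrho$ and $\varphi$ has $\varrho$ as its unique fixed point in $(0,\infty)$ with $\varphi(r)>r$ for $r>\varrho$, an easy induction using Lemma~\ref{l:absfc} (which gives $|z_{n+1}|=\varphi(|z_n|)$ whenever $z_n\in A\cup B_0$) shows that $|z_n|>\varrho$ for all $0\le n\le N$, and hence the sequence $(|z_n|)$ is strictly increasing along the orbit under consideration. Therefore the propagated term is strictly smaller in absolute value than the direct term, and the ultrametric inequality upgrades the chain-rule identity to the desired equality.

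The main potential obstacle is verifying that this strict monotonicity really holds throughout; but since $A\cup B_0$ is exactly the region where $|\fc|$ is governed by $\varphi$, and $|z|>\varrho$ keeps the orbit above the fixed point of $\varphi$, no further assumption is required beyond those already in the statement.
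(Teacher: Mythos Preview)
Your proof is correct and follows essentially the same approach as the paper: induction on $N$, the chain rule for $\partial_j\fc^{\circ N+1}$, the estimate $|\fc'(z_N)|\le |z_{N+1}|/|z_N|$ from Lemma~\ref{l:fprima}, and the strict inequality $\varphi^{\circ N-1}(|z|)<\varphi^{\circ N}(|z|)$ (coming from $|z|>\varrho$) to force the direct term to dominate via the ultrametric inequality. Your write-up is slightly more explicit than the paper's about why the orbit stays in $A\cup B_0$ and why $(|z_n|)$ is strictly increasing, but the argument is the same.
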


\begin{proof}
  We proceed by induction. Note that Proposition~\ref{p:partialj} establishes the case $N=1$. For the inductive step, first apply the chain rule:
  $$\partial_j\fc^{\circ 1+N}(z) = \fc'(\fc^{\circ N}(z)) \cdot
  \partial_j \fc^{\circ N}(z) +  \partial_j \fc ( \fc^{\circ N}(z)).$$
Then, observe that:
  \begin{eqnarray*}
     |\fc'(\fc^{\circ N}(z))| \cdot
     |\partial_j \fc^{\circ N}(z)| & \le &
     \dfrac{\varphi^{\circ N+1}(|z|)}{\varphi^{\circ N}(|z|)} \cdot \dfrac{\varphi^{\circ N}(|z|) \cdot \varphi^{\circ N-1}(|z|)}{R_j^2}\\
  &< &\dfrac{\varphi^{\circ N+1}(|z|) \cdot \varphi^{\circ N}(|z|)}{R_j^2}\\
                                   &= &|\partial_j \fc ( \fc^{\circ N}(z))|\\
                                   &= &|\partial_j\fc^{\circ N+1}(z)|,
  \end{eqnarray*}
  where the last line follows from the ultrametric triangle inequality.
\end{proof}

\begin{lemma}
\label{l:deltajNk}  Assume that $z$ is a type I point and 
  $$z \in C_\bc (\balpha^{(k)} B_k^\ell),$$
  for some $\ell \ge 0$.
  If \eqref{eq:transversality} holds for $k$, then 
  $$|\partial_j f^{\circ N_k + \ell'}_{\bc}(z)| = \dfrac{R_k S_k}{R_j^2} |\lambda_k|^{\ell'},$$
  for all $j > k$ and $0 \le \ell' \le \ell+1$.
\end{lemma}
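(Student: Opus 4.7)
The plan is to expand $\partial_j \fc^{\circ N}(z)$ using the chain rule and then exploit the ultrametric inequality to isolate a single dominant summand. Writing $z_n := \fc^{\circ n}(z)$ and iterating the identity $\partial_j (\fc \circ F_n) = \fc'(F_n)\,\partial_j F_n + (\partial_j \fc)(F_n)$, one obtains
$$\partial_j \fc^{\circ N}(z) \;=\; \sum_{i=0}^{N-1} T_i^{(N)}, \qquad T_i^{(N)} := (\fc^{\circ N-1-i})'(z_{i+1}) \cdot \partial_j \fc(z_i).$$
Because $\Cp$ is ultrametric, as soon as one summand strictly dominates the rest in absolute value, $|\partial_j \fc^{\circ N}(z)|$ equals that summand's absolute value. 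The target equality follows once we show that for $N = N_k + \ell'$ the dominant index is $i = N_k - 1$, i.e.\ the last step of the transition run $A^{n_k}$ preceding the entry into $B_k$, and that $|T_{N_k-1}^{(N)}| = R_k S_k |\lambda_k|^{\ell'}/R_j^2$.

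The individual sizes are read off by combining Proposition~\ref{p:partialj}, which yields $|\partial_j \fc(z_i)| = |z_i||z_{i+1}|/R_j^2$ (the relevant orbit stays in $|z| \le R_k < R_j$), with Lemma~\ref{l:fprima} for $|\fc'(z_l)|$. The product $\prod_{l=i+1}^{N-1} |\fc'(z_l)|$ telescopes phase-by-phase: in a $B_{k'}$ phase with $k' \ge 1$ it equals $|\lambda_{k'}|^{\text{length}}$, in a $B_0$ phase it equals $|z_{\text{end}}|/(p^{\text{length}} |z_{\text{start}}|)$, and in an $A$ phase it is bounded above by $|z_{\text{end}}|/|z_{\text{start}}|$. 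Direct evaluation at $i = N_k - 1$, using $|z_{N_k - 1}| = S_k$, $|z_{N_k}| = R_k$ together with the factor $|\lambda_k|^{\ell'}$ coming from the final $\ell'$ iterations in $B_k$, gives exactly $|T_{N_k-1}^{(N_k + \ell')}| = R_k S_k |\lambda_k|^{\ell'}/R_j^2$.

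It remains to verify $|T_i^{(N_k + \ell')}| < R_k S_k |\lambda_k|^{\ell'}/R_j^2$ for every $i \ne N_k - 1$, which I do by a case split on the phase containing $z_i$. If $z_i$ lies in $B_k$, or in the $A^{n_k}$ run with $i < N_k - 1$, the comparison reduces to the single inequality $S_k |\lambda_k| > R_k$, which follows from strict monotonicity of $r \mapsto \varphi(r)/r$ on $(0,\infty)$ together with $S_k < R_k$. If $z_i$ lies in the $B_0^{m_k}$ wild excursion, the telescoped $B_0$ product contributes a favorable factor $p^{-(m_k - 1 - s)} \le 1$ and the estimate reduces to $|z_i| < 1 < S_k$. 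The hard case is $i < L_k$, where the contribution must be propagated through every intermediate wild-and-transition cycle $q \in \{k'+1, \ldots, k\}$, each producing a factor of the form $|\lambda_{q-1}|^{\ell_q} R_q / (p^{m_q} |z_{L_q}|)$. The essential move is to absorb each $|\lambda_{q-1}|^{\ell_q}/p^{m_q}$ via \eqref{eq:transversality} as $< \varrho^2/(R_{q-1} S_{q-1})$; after substitution, the $R_q$ and $S_q$ factors telescope against those in $|T_{N_k-1}^{(N_k + \ell')}|$, and the ratio collapses to an expression essentially of the form $\varrho^{2(k-k')}/\prod_q (S_q |z_{L_q}|)$.

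This residual ratio is controlled by the uniform lower bound $|z_{L_q}| > \varrho$ for every $q$, which holds because $\varphi$ is contracting on the closed disk of radius $\varrho$ with repelling fixed point at $\xi_{0,\varrho}$, so the orbit could not escape $B_0$ within finitely many iterations if $|z_{L_q}| \le \varrho$, contradicting the prescribed itinerary $B_0^{m_q} A^{n_q} B_q$. Combined with $S_q > 1$ (which follows from $R_q > p$), this gives $\prod_q (S_q |z_{L_q}|) > \varrho^{k-k'}$, so $|T_i^{(N_k+\ell')}|/|T_{N_k-1}^{(N_k+\ell')}|$ is bounded by a positive power of $\varrho$ and is strictly less than $1$. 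Hence $T_{N_k - 1}^{(N_k + \ell')}$ is the unique dominant summand in the chain-rule expansion, and the ultrametric inequality yields the asserted equality. The bookkeeping in the earlier-cycle case---pairing one application of \eqref{eq:transversality} with each intermediate cycle so that the telescoped product cancels cleanly against the target---is the main obstacle in the argument.
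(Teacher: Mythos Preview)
Your chain-rule expansion $\partial_j\fc^{\circ N}(z)=\sum_i T_i^{(N)}$ followed by identification of $T_{N_k-1}$ as the unique dominant summand is correct and rests on exactly the ingredients the paper uses (Proposition~\ref{p:partialj}, Lemma~\ref{l:fprima}, and~\eqref{eq:transversality}). The paper, however, organizes the same computation as a double induction: an outer induction on $k$ and an inner one on $\ell'$. At each outer step the inductive hypothesis hands over $|\partial_j\fc^{\circ L_{k+1}}(z)|$ in closed form, so your ``hard case'' $i<L_k$ never has to be confronted directly: instead of bounding a telescoping product over all past cycles, the paper compares just two terms---the accumulated derivative against the fresh contribution from the current $B_0^{m_{k+1}}A^{n_{k+1}}$ block, the latter computed by Corollary~\ref{c:partialj-fn}---and invokes~\eqref{eq:transversality} exactly once per outer step. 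Your route trades this structural economy for a global case analysis on the position of $z_i$; that is fine, but the bookkeeping is heavier, and one small slip is that the $A^{n_k}$ case with $i<N_k-1$ does \emph{not} reduce to $S_k|\lambda_k|>R_k$ but rather to $|z_i|<S_k$, which follows from $|z_i|<|z_{N_k-1}|=S_k$ by monotonicity of $\varphi$.
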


\begin{proof}
  Given  a type I point $z \in C_\bc (\balpha^{(k)} B_k^\ell)$,
  let us first show that if the desired
  formula holds for $\ell'=0$, then it holds
  for $0 \le \ell' \le \ell+1$.
We proceed by induction on $\ell'$.
So assume that for some $0 \le \ell' < \ell+1$,
$$|\partial_j \fc^{\circ N_{k}+\ell'}(z)| = |\lambda_{k+1}|^{\ell'} \dfrac{R_{k}
  S_{k}}{R_j^2}.$$
Then, 
$$|\partial_j \fc^{\circ N_{k}+\ell'+1}(z)| = |\fc'(z_{N_{k}+\ell'})
\partial_j \fc^{\circ N_{k}+\ell'}(z) + \partial_j \fc (z_{N_{k}+\ell'})|,$$
and $$|\fc'(z_{N_{k}+\ell'})
\partial_j \fc^{\circ N_{k}+\ell'}(z)|= |\lambda_{k}| \cdot |\lambda_{k}|^{\ell'} \dfrac{R_{k} S_{k}}{R_j^2} >|\lambda_{k}|^{\ell'} \dfrac{R_{k}^2}{R_j^2}= |\partial_j \fc (z_{N_{k}+\ell'})|,$$
since $|\lambda_{k}|=\varphi(R_{k})/R_{k} > R_{k}/S_{k}$.
Therefore, $$|\partial_j \fc^{\circ N_{k}+\ell'+1}(z)| = |\lambda_{k}|^{\ell'+1} \dfrac{R_{k} S_{k}}{R_j^2}.$$
  
  Now we proceed by induction on $k$ to prove the lemma.
  If $z \in C_\bc (\balpha^{(1)} B_1^\ell)$, then, by Corollary~{\ref{c:partialj-fn}},
  $$|\partial_j \fc^{\circ N_1} (z)| = \dfrac{R_1 S_1}{R_j^2}.$$
  By the previous discussion we have the desired formula for $|\partial_j \fc^{\circ N_1+\ell'} (z)|$ when $0 \le \ell' \le \ell+1$.
  Suppose the lemma true for $k$ and that \eqref{eq:transversality} holds for $k+1$. For some $\ell \ge 0$, assume that
  $$z \in C_\bc (\balpha^{(k+1)} B_{k+1}^\ell).$$
  Then $$z \in C_\bc (\balpha^{(k)} B_k^{\ell_{k+1}-1}),$$
  so, by the inductive hypothesis,
  $$|\partial_j f^{\circ L_{k+1}}_{\bc}(z)| = \dfrac{R_k S_k}{R_j^2}
  |\lambda_k|^{\ell_{k+1}}.$$
 Recall that $M_{k+1} = m_{k+1} + L_{k+1}$ and apply the chain rule:
  \begin{eqnarray*}
    |\partial_j f^{\circ M_{k+1}}_{\bc}(z)| &=& |(\fc^{\circ m_{k+1}})' (z_{L_{k+1}}) \cdot \partial_j f^{\circ L_{k+1}}_{\bc} (z)
 +\partial_j f^{\circ m_{k+1}}_{\bc}(z_{L_{k+1}})|.
  \end{eqnarray*}
  From Lemma~\ref{l:fprima},
  $$|(\fc^{\circ m_{k+1}})' (z_{L_{k+1}})| \le p^{-m_{k+1}} \dfrac{|z_{M_{k+1}}|}{|z_{L_{k+1}}| }.$$
 Recall that $|z_{L_{k+1}}| > \varrho$. Therefore, the inductive hypothesis together with \eqref{eq:transversality} implies that:
$$|(\fc^{\circ m_{k+1}})' (z_{L_{k+1}}) \cdot \partial_j f^{\circ L_{k+1}}_{\bc} (z) | \le p^{-m_{k+1}} \dfrac{|z_{M_{k+1}}|}{|z_{L_{k+1}}| } \dfrac{R_k S_k}{R_j^2}
|\lambda_k|^{\ell_{k+1}}    < \dfrac{|z_{M_{k+1}}| \cdot \varrho}{R_j^2}.$$
 Corollary~{\ref{c:partialj-fn}}, applied to
  $z_{L_{k+1}} \in C_\bc (B_0^{m_k})$ gives:
$$|\partial_j f^{\circ m_{k+1}}_{\bc}(z_{L_{k+1}})| =
\dfrac{|z_{M_{k+1}}| \cdot|z_{M_{k+1}-1}|}{R_j^2}
> \dfrac{|z_{M_{k+1}}| \cdot \varrho}{R_j^2}. 
$$
Hence,
\begin{eqnarray*}
    |\partial_j f^{\circ M_{k+1}}_{\bc}(z)| &=&
 |\partial_j f^{\circ m_{k+1}}_{\bc}(z_{L_{k+1}})|.
\end{eqnarray*}

Now we apply the chain rule taking into account that $N_{k+1} = M_{k+1}+n_{k+1}$:
$$|\partial_j \fc^{\circ N_{k+1}}(z)| = |\partial_j  f^{\circ n_{k+1}}_{\bc}(z_{M_{k+1}}) +  (\fc^{\circ n_{k+1}})' (z_{M_{k+1}}) \cdot \partial_j f^{\circ M_{k+1}}_{\bc} (z) |$$
and, by Corollary~\ref{c:partialj-fn}, since
$z_{M_{k+1}} \in C_\bc(A^{n_{k+1}}B_{k+1})$, we have:
\begin{eqnarray*}
  |\partial_j  f^{\circ n_{k+1}}_{\bc}(z_{M_{k+1}})| &=&
  \dfrac{R_{k+1} S_{k+1}}{R_j^2}\\
  &>& \dfrac{R_{k+1}}{|z_{M_{k+1}}|} \cdot \dfrac{|z_{M_{k+1}}| \cdot|z_{M_{k+1}-1}|}{R_j^2}\\
  &\ge& | (\fc^{\circ n_{k+1}})' (z_{M_{k+1}}) \cdot \partial_j f^{\circ M_{k+1}}_{\bc} (z)|,
\end{eqnarray*}
where the last line is a consequence of Lemma~\ref{l:fprima}.
Thus,
$$|\partial_j \fc^{\circ N_{k+1}}(z)| = \dfrac{R_{k+1} S_{k+1}}{R_j^2}.$$

It follows that the lemma holds for $k+1$ and  $\ell'=0$. From the discussion
at the beginning of the proof, the lemma holds for $k+1$ and $0\le \ell'\le\ell+1$.
\end{proof}

We are now in position to apply the non-Archimedean Mean Value Theorem~\ref{thr:rolle} to prove the stability of the truncated cylinder
$C_\bc (\balpha^{(k)})$ under perturbations in
the set $U_{k+1}(\bc,\varepsilon_{k+1})$, introduced in Section~\ref{s:notation}.

\begin{lemma}
  \label{l:stability-with}
  Let $k\ge 1$. 
  Assume that \eqref{eq:stability} holds for $k+1$ and \eqref{eq:transversality}  holds for $k$.
  If  $\bc' \in U_{k+1}(\bc,\varepsilon_{k+1})$, 
  then $$C_{\bc'} (\balpha^{(k)}) = C_\bc (\balpha^{(k)}).$$
\end{lemma}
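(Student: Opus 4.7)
By symmetry in the relation $|c_{k+1} - c'_{k+1}| < \varepsilon_{k+1}$ (since $\bc \in U_{k+1}(\bc', \varepsilon_{k+1})$ as well), it suffices to show $C_{\bc}(\balpha^{(k)}) \subseteq C_{\bc'}(\balpha^{(k)})$. As in the proof of Proposition~\ref{p:cylinder}, every connected component of $C_{\bc}(\balpha^{(k)})$ is either a singleton or a closed Berkovich disk, and disks are determined by their classical points; so it is enough to fix a type I point $z \in C_{\bc}(\balpha^{(k)}) \cap \aone$ and verify $f^{\circ n}_{\bc'}(z) \in \alpha_n$ for every $0 \le n \le N_k$.

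The core estimate I would use is: since $c \mapsto f^{\circ n}_{(c_1, \ldots, c_k, c, c_{k+2}, \ldots)}(z)$ is analytic on $\C_p$ and $\varepsilon_{k+1} < R_{k+1}$, a non-Archimedean Taylor expansion gives
\[
  |f^{\circ n}_{\bc'}(z) - f^{\circ n}_{\bc}(z)| \;\le\; \sup_{c \in D(c_{k+1}, \varepsilon_{k+1})} \bigl|\partial_{k+1} f^{\circ n}_{(\ldots, c, \ldots)}(z)\bigr| \cdot |c'_{k+1} - c_{k+1}|,
\]
and for the perturbed iterate to land in the same partition element it suffices that this bound be smaller than the relevant radius. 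Applying Lemma~\ref{l:deltajNk} iteratively to the prefixes $\balpha^{(j-1)} B_{j-1}^{\ell_j-1}$ of $\balpha^{(k)}$ for $j = 2, \ldots, k$, the partial derivative at $\bc$ attains local maxima exactly at the positions $n = L_j = N_{j-1} + \ell_j$, with value $R_{j-1} S_{j-1}|\lambda_{j-1}|^{\ell_j}/R_{k+1}^2$; at these positions the orbit lies in $B_0$ (radius $1$), and hypothesis \eqref{eq:stability} yields
\[
  \varepsilon_{k+1} \cdot \frac{R_{j-1} S_{j-1} |\lambda_{j-1}|^{\ell_j}}{R_{k+1}^2} \;<\; \varrho^{k+1-j} \;<\; 1.
\]
At the remaining positions inside $B_0^{m_j}$ or $A^{n_j}$, the chain-rule computation in the proof of Lemma~\ref{l:deltajNk} combined with \eqref{eq:transversality} for $k$ shows the derivative is dominated by the adjacent peak; and at the expanding-excursion positions inside some $B_j$ (radius $R_j \ge R_1 > p$) or at the terminal position $N_k \in B_k$, the trivial bound $\varepsilon_{k+1} < R_{k+1}$ already suffices (for instance at $N_k$ one gets $|f^{\circ N_k}_{\bc'}(z) - f^{\circ N_k}_{\bc}(z)| < R_{k+1} \cdot R_k S_k/R_{k+1}^2 = R_k S_k/R_{k+1} < R_k$ because $S_k < R_{k+1}$).

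The main obstacle will be establishing these partial-derivative bounds uniformly for every $\bc'' \in U_{k+1}(\bc, \varepsilon_{k+1})$, not only at the base parameter $\bc$, since the supremum in the Taylor estimate runs over the whole perturbation disk. I plan to handle this by a simultaneous induction on $n$: the inductive hypothesis at step $n$ asserts both that (i) the orbit of $z$ under $f_{\bc''}$ remains in the prescribed partition element for every $\bc'' \in U_{k+1}(\bc, \varepsilon_{k+1})$, and (ii) the partial-derivative formulas of Lemma~\ref{l:fprima}, Corollary~\ref{c:partialj-fn}, and Lemma~\ref{l:deltajNk} hold uniformly in $\bc''$ with the same magnitude. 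Statement (i) at level $n$ justifies the hypotheses of these formulas, giving (ii) at level $n$, and (ii) combined with the Taylor bound propagates (i) to level $n+1$.
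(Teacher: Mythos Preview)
Your overall strategy---symmetry, reduce to type~I points, control $|f^{\circ n}_{\bc'}(z)-f^{\circ n}_{\bc}(z)|$ via $\partial_{k+1}$ and Lemma~\ref{l:deltajNk}---matches the paper's. But the displayed ``Taylor'' inequality
\[
|f^{\circ n}_{\bc'}(z)-f^{\circ n}_{\bc}(z)|\ \le\ \sup_{c\in D(c_{k+1},\varepsilon_{k+1})}|\partial_{k+1}f^{\circ n}_{(\ldots,c,\ldots)}(z)|\cdot|c'_{k+1}-c_{k+1}|
\]
is \emph{false} over $\C_p$. A derivative bound on a disk does not give a Lipschitz bound of the same size: take $g(z)=z+z^p/p$ on $D(0,1)$; then $|g'(z)|=|1+z^{p-1}|=1$ for all $|z|<1$, yet for any $|z|$ strictly between $\varrho$ and $1$ one has $|g(z)-g(0)|=p|z|^p>|z|$. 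This is exactly the obstruction that Theorem~\ref{thr:rolle} encodes: one only gets $|g(z)-g(w)|=|g'(u)|\,|z-w|$ when $|z-w|<\varrho\cdot r$, where $r$ is the radius of the domain of analyticity. Since your simultaneous induction on $n$ works on the fixed neighborhood $U_{k+1}(\bc,\varepsilon_{k+1})$ and only establishes derivative control on that same neighborhood, the Mean Value step cannot be justified as written.

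The paper repairs this by organizing the induction over blocks $j=1,\dots,k$ rather than over individual iterates $n$, and by introducing a decreasing chain of parameter balls $V_j:=U_{k+1}(\bc,\varrho^{\,j-k}\varepsilon_{k+1})$. At stage $j$ one has derivative control on the larger $V_j$ (from the inductive hypothesis together with Lemma~\ref{l:deltajNk}), and then Theorem~\ref{thr:rolle} legitimately yields the difference estimate on the smaller $V_{j+1}$, whose radius is $\varrho$ times that of $V_j$. This is precisely why hypothesis~\eqref{eq:stability} carries the factor $\varrho^{k+1-j}$: it is consumed one $\varrho$ at a time over the $k-1$ applications of the Mean Value Theorem, whereas you simply bounded $\varrho^{k+1-j}<1$ and threw the factor away. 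In addition, the paper does not track the orbit step by step through $B_0^{m_j}A^{n_j}$; instead, once $|f^{\circ L_{j}}_{\bc'}(z)-f^{\circ L_{j}}_{\bc}(z)|<\varrho$ is known, it invokes Lemma~\ref{l:wc} and Corollary~\ref{c:tangent} to conclude in one stroke that both iterates lie in $C_{\bc}(B_0^{m_j}A^{n_j}B_j)=C_{\bc'}(B_0^{m_j}A^{n_j}B_j)$, and then treats the expanding excursion in $B_j$ recursively via the exact distance formula of Lemma~\ref{l:absfc} together with Lemma~\ref{l:variationBk}. Your step-by-step plan through the wild and transition segments would require controlling $|\partial_{k+1}f^{\circ n}|$ at positions where its magnitude genuinely depends on $|f^{\circ n}_{\bc''}(z)|$ and hence on $\bc''$, which makes the uniform-constant-magnitude version of (ii) unavailable there.
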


Before proving the lemma, let us observe that, by the Maximum Principle,  each connected component of the preimage of a Berkovich disk, under an entire map, is again a Berkovich disk.
In the case of our maps $\fc$, if $\cD$ is an open disk disjoint from $[0,\infty[$, then each component $\cD'$ of $\fc^{-1}(\cD)$, is again a disk disjoint from $[0,\infty[$. Thus, $\cD'$ is contained in $A$ or  $B_j$ for some $j \ge 0$. By repeatedly applying this observation, given $\bc$ and $k \ge 1$,
we conclude that each connected component
of $C_\bc(\balpha^{(k)})$ is an open disk. It follows that the cylinder
$C_\bc(\balpha^{(k)})$ is uniquely determined by
$C_\bc(\balpha^{(k)}) \cap \aone.$

\begin{proof}
  The lemma will follow once we have proven
  that, for all $1\le j \le k$, if $\bc' \in V_j:=U_{k+1}(\bc,\varrho^{j-k} \varepsilon_{k+1})$ and 
  $z \in C_\bc (\balpha^{(j)}) \cap \aone$, then $z \in C_{\bc'} (\balpha^{(j)})$.
  We will prove this assertion by induction on $j$.
  
  For $j=1$, the assertion holds by Corollary~\ref{c:tangent}.
  Suppose that it holds for some $j<k$. Let
  $z \in C_\bc (\balpha^{(j+1)})$ be a type I point and $\bc' \in V_{j+1}$.
  Then,  $z \in C_{\bc'} (\balpha^{(j)})$
  for all $\bc' \in  V_j$.
  From Lemma~\ref{l:deltajNk},
  $$|\partial_{k+1} f^{\circ N_j+1 }_{\bc'}(z)| =
  \dfrac{R_j S_j}{R_{k+1}^2} |\lambda_j|.$$
  In view of Theorem~\ref{thr:rolle} and \eqref{eq:stability}, for all $\bc' \in V_{j+1}$,
  \begin{eqnarray*}
    |f^{\circ N_j+1}_{\bc}(z)-f^{\circ N_j+1}_{\bc'}(z)|
          &\le& \varrho^{j-k+1} \varepsilon_{k+1} \cdot \dfrac{R_j S_j}{R_{k+1}^2} |\lambda_j|\\
          & < &\varrho |\lambda_{j}|^{-\ell_{j+1}+1} < R_j.
      \end{eqnarray*}
  In particular, $z \in C_{\bc'} (\balpha^{(j)}B_j)$.
  For $1 \le \ell \le \ell_{j+1}$,  we proceed recursively on $\ell$ to show that:
  \begin{equation}
    \label{eq:stability-proof}
    |f^{\circ N_j + \ell}_{\bc}(z)-f^{\circ N_j + \ell}_{\bc'}(z)|< \varrho
  |\lambda_{j}|^{-\ell_{j+1} + \ell}.
\end{equation}
In particular, this recursion will show that $z \in C_{\bc'} (\alpha^{(j)}B^{\ell-1}_j)$,
for all $1 \le \ell \le \ell_{j+1}$.
Assume that \eqref{eq:stability-proof} holds for some $\ell < \ell_{j+1}$.
For $z \in B_j$ and $\bc' \in V_{j+1}$, from Lemma~\ref{l:variationBk},
\begin{eqnarray*}
  |\fc(z) - f_{\bc'}(z)| &=&\dfrac{R_j^2}{R^2_{k+1}} |c_{k+1} - c_{k+1}'|\\
                         & < & \dfrac{R_j^2}{R^2_{k+1}}  \varrho^{-k+j+1} \varepsilon_{k+1}\\
                         &<& \dfrac{R_j^2}{R^2_{k+1}} \varrho \dfrac{R^2_{k+1}}{R_j S_j}  |\lambda_{j}|^{-\ell_{j+1}} < \varrho
                             |\lambda_{j}|^{-\ell_{j+1} + \ell+1},
\end{eqnarray*}
where the second line is obtained from the definition of $V_{j+1}$, and the third from the bound on $\varepsilon_{k+1}$ given by ~\eqref{eq:stability} and
then using that  $R_j/S_j < |\lambda_j|$.
Write $|f^{\circ N_j + \ell+1}_{\bc}(z)-f^{\circ N_j + \ell+1}_{\bc'}(z)|$
as $$|\fc(f^{\circ N_j + \ell}_{\bc}(z))-f_{\bc'}(f^{\circ N_j + \ell}_{\bc}(z)) +
f_{\bc'}(f^{\circ N_j + \ell}_{\bc}(z)) - f_{\bc'}(f^{\circ N_j + \ell}_{\bc'}(z))|.$$
Then,
\begin{eqnarray*}
  |f^{\circ N_j + \ell+1}_{\bc}(z)-f^{\circ N_j + \ell+1}_{\bc'}(z)| &<&
\max\left\{\dfrac{R_j^2}{R^2_{k+1}} |c_{k+1} - c_{k+1}'|, \varrho |\lambda_{j}|^{-\ell_{j+1} + \ell+1} \right\}\\
  &=& \varrho |\lambda_{j}|^{-\ell_{j+1} + \ell+1}.
\end{eqnarray*}
Thus, we establish  ~\eqref{eq:stability-proof} for $\ell =
\ell_{j+1}$. 
Hence, the distance between $f^{\circ N_j + \ell_{j+1}}_{\bc}(z)$ and $f^{\circ N_j + \ell_{j+1}}_{\bc'}(z)$ is bounded above by $\varrho$.
By Lemma~\ref{l:wc} and  Corollary~\ref{c:tangent},
$$f^{\circ N_j + \ell_{j+1}}_{\bc'}(z) \in C_\bc(B_0^{m_{j+1}} A^{n_{j+1}}B_{j+1})
=C_{\bc'}(B_0^{m_{j+1}} A^{n_{j+1}}B_{j+1}).$$
 It follows that
 $z \in C_{\bc'} (\balpha^{(j+1)})$, which ends the proof of the lemma. In fact,
 since $\bc \in U_{k+1}(\bc' , \varepsilon_{k+1})$, 
 the same argument proves that $C_{\bc'} (\balpha^{(k)}) \subset
 C_{\bc} (\balpha^{(k)})$.
\end{proof}

Our results so far are only concerned with perturbations in
$U_{k+1} (\bc,\varepsilon_{k+1})$, rather than in the larger set $\Delta_{k+1} (\bc, \beps)$, considered in the statement of
the Cylinder Stability Lemma~\ref{l:stability}.
In order to remedy this situation,
we show that maps in $\Delta_{k+1} (\bc, \beps)$ converge to $\fc$,
as $k \to \infty$, uniformly on any bounded set. 
To be precise, given $R > 0$ and an entire map $g$, let
$$ |g |_R:=\sup \{ |g(z)| : z \in D(0,R) \}.$$

\begin{lemma}
  \label{l:convergence}
  Assume that $\beps = (\varepsilon_{j})_{j \ge 2}$ converges to $0$, as
    $j \to \infty$. Then, given any $R > 0$, 
  $$\lim_{k \to \infty}
  \sup \left\{ \, |\fc - f_{\bc'} |_R : \bc' \in \Delta_{k}(\bc,\beps) \right\} = 0.$$
\end{lemma}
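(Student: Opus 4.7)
The strategy is to exploit the product form of $f_\bc$ directly together with the telescoping identity for products and the ultrametric inequality. The key observation is that for $|z|\le R$ and $j$ large enough that $R_j > R$, each factor $1-z/c_j$ has absolute value exactly $1$, so the tail of the product is a ``unit'' and only the first few factors contribute to the size.

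First, since $R_j \to \infty$, fix $k_0$ such that $R_j > R$ for all $j \ge k_0$. Then, for $|z|\le R$ and any $j \ge k_0$, we have $|1-z/c_j| = |1-z/c_j'| = 1$ (whenever $|c_j-c_j'| < R_j$, which holds for $\bc' \in \Delta_k(\bc,\beps)$ once $\varepsilon_j < R_j$). Moreover the quantity $|z^p/p| \cdot \prod_{j=1}^{k_0-1}|1-z/c_j|$ is bounded on $\overline{D}(0,R)$ by a constant $M(R)$ independent of $k$, and adjoining factors indexed by $k_0\le j<k$ does not change this bound because those extra factors have norm $1$.

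Next, take $\bc'\in\Delta_k(\bc,\beps)$ with $k \ge k_0$. Since $c_j = c_j'$ for $j < k$, we may factor
\[
f_\bc(z)-f_{\bc'}(z) = \frac{z^p}{p}\prod_{j=1}^{k-1}\Bigl(1-\frac{z}{c_j}\Bigr)\cdot\Bigl[\,\textstyle\prod_{j\ge k}\bigl(1-z/c_j\bigr) \;-\; \prod_{j\ge k}\bigl(1-z/c_j'\bigr)\,\Bigr].
\]
Applying the standard telescoping identity
$\prod_j a_j - \prod_j b_j = \sum_n \bigl(\prod_{j<n} a_j\bigr)(a_n-b_n)\bigl(\prod_{j>n} b_j\bigr)$
with $a_j=1-z/c_j$ and $b_j=1-z/c_j'$, and using that every $a_j,b_j$ has norm $1$ on $\overline{D}(0,R)$ for $j\ge k\ge k_0$, the ultrametric inequality yields
\[
\Bigl|\textstyle\prod_{j\ge k}(1-z/c_j) - \prod_{j\ge k}(1-z/c_j')\Bigr| \;\le\; \sup_{j\ge k}|a_j-b_j| \;=\; \sup_{j\ge k}\frac{|z|\,|c_j-c_j'|}{R_j^2} \;\le\; \sup_{j\ge k}\frac{R\,\varepsilon_j}{R_j^2}.
\]

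Combining the two estimates, for every $k\ge k_0$ and every $\bc'\in\Delta_k(\bc,\beps)$,
\[
|f_\bc - f_{\bc'}|_R \;\le\; M(R)\cdot \sup_{j\ge k}\frac{R\,\varepsilon_j}{R_j^2}.
\]
Since $\varepsilon_j\to 0$ and $R_j\to\infty$, the right-hand side tends to $0$ as $k\to\infty$, proving the lemma. The argument is essentially bookkeeping; the only ``trick'' is recognizing that the ultrametric inequality converts a product of perturbed factors into a simple max, and that the tail factors contribute trivially because $R_j$ eventually exceeds $R$.
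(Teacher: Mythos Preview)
Your proof is correct and follows essentially the same approach as the paper's: both factor out the common part (the paper via the ratio $f_{\bc'}/f_\bc$, you via the shared prefix), then use the ultrametric inequality to reduce the difference of the tail products to a $\max$ of individual factor differences, each of size at most $R\,\varepsilon_j/R_j^2$. The paper writes the tail as $\prod_{j\ge k}(1+a_j)-1$ with $a_j=\dfrac{1-z/c_j'}{1-z/c_j}-1$ and bounds $|f_\bc|$ by $\varphi(R)$, whereas you telescope additively and bound the prefix by a constant $M(R)$; these are cosmetic variants of the same argument.
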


\begin{proof}
  Without loss of generality, we assume that $\varepsilon_j <1$ for all $j$,
  and $\bc' \in \Delta_2(\bc,\beps)$.
  Set $$a_j (z) := \dfrac{z}{c'_j} \dfrac{c_j - c_j'}{z - c_j},$$
  and observe that for all $|z| < R$, and all $j$ such that $R_j \ge R$,
  we have  $$|a_j (z)| < \dfrac{R}{R_j^2} \cdot \varepsilon_j <  \varepsilon_j.$$

  Now, if $\bc' \in \Delta_k(\bc,\beps)$ for some $k$, then
  $$\left|\dfrac{f_{\bc'}(z)}{f_{\bc} (z)} -1 \right| = \left|\prod_{j \ge k} (1 + a_j(z)) -1 \right| \le
  \max_{j \ge k} |a_j(z)| < \max_{j \ge k} \varepsilon_j,$$
  for all $|z| < R$.
Since $|f_{\bc}(z)| < \varphi(R)$ for all $|z| <R$, the lemma follows.
\end{proof}

\begin{proof}[Proof of Lemma~\ref{l:stability}]
  Consider $k_0 \ge 1$, and $\bc' \in \Delta_{k_0+1} (\bc,\beps)$.
   For $k \ge k_0$,
  let $\bc^{(k)}$ be the sequence defined by
  $c^{(k)}_j := c'_j$ for all $j < k$,
  and  $c^{(k)}_j := c_j$ for all $j \ge k$.
  Note that $\bc^{(k_0)} = \bc$.

  It follows that
  $\bc^{(k+1)} \in U_{k+1}(\bc^{(k)},\varepsilon_{k+1})$, for all $k \ge k_0$.
  Hence, for all $k > k_0$, by Lemma~\ref{l:stability-with},
  $C_{\bc^{(k)}} (\balpha^{(k_0)}) = C_{\bc}(\balpha^{(k_0)})$.
  Also, $\bc^{(k+1)} \in \Delta_{k+1} (\bc',\beps)$, so $f^{\circ n}_{\bc^{(k)}}(z) \to f^{\circ n}_{\bc'}(z)$, as $k \to \infty$,
  for all $z \in C_{\bc}(\balpha^{(k_0)})\cap \aone$, and all $n \le N_{k_0}$,
   by Lemma~\ref{l:convergence}. It follows that
   $C_{\bc'}(\balpha^{(k_0)}) \subset C_{\bc}(\balpha^{(k_0)}).$  Equality of the cylinder sets follows, since $\bc \in \Delta_{k_0+1} (\bc',\beps)$.
\end{proof}

\section{Connecting Lemma}
\label{s:connecting}
The Connecting Lemma~\ref{l:connecting} will be the outcome of two perturbations.
First, we consider a parameter $\bc$ and a point $x$ with itinerary $\balpha^{(k)}B_k^\infty$, that is, $\fc^{\circ N_k}(x)$ is the repelling fixed point in $B_k$.
Near $\fc^{\circ N_k}(x)$, there are points with itinerary $B_k^{\ell_{k+1}}B_0^\infty$. 
The idea is to obtain a parameter $\bc''$ perturbing $\bc$, to connect  $\fc^{\circ N_k}(x)$ with
a point with itinerary
$B_k^{\ell_{k+1}}B_0^\infty$. Then, the itinerary of $x$ under $f_{\bc''}$ becomes $\balpha^{(k)}B_k^{\ell_{k+1}}B_0^\infty$.
To achieve this, a priori estimates are provided in Lemma~\ref{l:h}  and
$\bc''$ is obtained in Lemma~\ref{l:cprime}.
Then, in the proof of the Connecting Lemma~\ref{l:connecting},
we produce a second perturbation that yields a parameter $\bc'$ near $\bc''$.
This second perturbation 
connects $f_{\bc''}^{\circ L_{k+1}} (x)$ with a 
point whose itinerary is $B^{m_{k+1}}_0 A^{n_{k+1}}B_{k+1}^\infty$.

\begin{lemma}
  \label{l:h}
  Given $k \ge 1$, let $$h_\bc (z) := \left( \fc|_{B_k} \right)^{-1}.$$
  Then, for all $\ell \ge 1$,
  $$|h_\bc^{\circ \ell} (0)- w_k(\bc)| = R_k |\lambda_k|^{-\ell},$$
  and 
  $$|\partial_{k+1} h_\bc^{\circ \ell} (0)| = \dfrac{R_k^2}{R_{k+1}^2 \cdot |\lambda_k|}.$$
\end{lemma}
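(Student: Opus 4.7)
The first identity follows from a clean contraction argument on $B_k$. By Lemma~\ref{l:absfc}, $\fc : B_k \to \cD(0,\varphi(R_k))$ is an isometric dilation by the factor $\varphi(R_k)/R_k = |\lambda_k|$ on type I points, so its inverse $h_\bc$ is a contraction by $|\lambda_k|^{-1}$ on the type I points of its domain. The point $w_k(\bc)$ is the unique fixed point of $\fc$ in $B_k$ (Proposition~\ref{p:fixed-points}), hence it is also fixed by $h_\bc$. Therefore
$$|h_\bc^{\circ \ell}(0) - w_k(\bc)| = |h_\bc^{\circ \ell}(0) - h_\bc^{\circ \ell}(w_k(\bc))| = |\lambda_k|^{-\ell}\,|w_k(\bc)|,$$
and since $w_k(\bc) \in \cD(c_k,R_k)$ with $|c_k|=R_k$, the ultrametric inequality gives $|w_k(\bc)| = R_k$, proving the claim.

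For the second identity, set $y_\ell := h_\bc^{\circ \ell}(0) \in B_k$, so that $\fc(y_\ell) = y_{\ell-1}$. Differentiating this relation with respect to $c_{k+1}$ and solving,
$$\partial_{k+1} y_\ell = \frac{1}{\fc'(y_\ell)}\bigl(\partial_{k+1} y_{\ell-1} - (\partial_{k+1}\fc)(y_\ell)\bigr).$$
By Lemma~\ref{l:fprima}, $|\fc'(y_\ell)| = |\lambda_k|$, and by Proposition~\ref{p:partialj}, $|(\partial_{k+1}\fc)(y_\ell)| = |y_\ell|\cdot|y_{\ell-1}|/R_{k+1}^2 = R_k^2/R_{k+1}^2$ once both $y_\ell$ and $y_{\ell-1}$ lie in $B_k$ (using $|c_k|=R_k$ and the ultrametric inequality). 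The inductive step is then immediate: assuming $|\partial_{k+1} y_{\ell-1}| \le R_k^2/(R_{k+1}^2 |\lambda_k|)$, since $|\lambda_k|>1$ the inductive contribution is strictly smaller than the direct contribution $R_k^2/R_{k+1}^2$, so the ultrametric triangle inequality forces the difference in the parentheses above to have absolute value exactly $R_k^2/R_{k+1}^2$, yielding $|\partial_{k+1} y_\ell| = R_k^2/(R_{k+1}^2 |\lambda_k|)$.

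To start the induction, note that $y_1 = h_\bc(0)$ is the unique zero of $\fc$ lying in $B_k$, namely $y_1 = c_k$, so $\partial_{k+1} y_1 = 0$. This trivially satisfies the inductive bound and launches the recursion at $\ell=2$. The only delicate point of the whole argument is maintaining the strict ultrametric gap between the inductive and direct contributions in the numerator across all iterates, since it is precisely this separation by the factor $|\lambda_k|>1$ that freezes the magnitude of $\partial_{k+1}y_\ell$ at the common value $R_k^2/(R_{k+1}^2|\lambda_k|)$ for every $\ell \ge 2$.
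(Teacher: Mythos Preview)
Your argument follows the same route as the paper's: contraction by $|\lambda_k|^{-1}$ for the first identity, and implicit differentiation of $\fc(y_\ell)=y_{\ell-1}$ together with the ultrametric gap coming from $|\lambda_k|>1$ for the second. In fact you are more careful than the paper at the base case: since $y_1=h_\bc(0)$ is the zero $c_k$ of $\fc$ in $B_k$, which does not depend on $c_{k+1}$, one has $\partial_{k+1}y_1=0$, so the stated equality $|\partial_{k+1} h_\bc^{\circ\ell}(0)|=R_k^2/(R_{k+1}^2|\lambda_k|)$ actually holds only for $\ell\ge 2$ (the paper's own proof glosses over this). Nothing downstream is affected, since the only use of the second identity, in Lemma~\ref{l:cprime}, is via the strict upper bound $|\partial_{k+1} h_\ba^{\circ\ell_{k+1}}(0)|<R_kS_k/R_{k+1}^2$, which your argument also delivers for all $\ell\ge 1$.

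One small citation issue: for $|\fc'(y_\ell)|=|\lambda_k|$ you invoke Lemma~\ref{l:fprima}, but as stated that lemma only covers $B_j$ with $j\ge 2$; for $k=1$ the needed fact follows instead from the expansion statement in Lemma~\ref{l:absfc} (which is what the paper cites).
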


\begin{proof}
  In view of Lemma~\ref{l:absfc}, the map $\fc$ expands distances by
  a factor of $|\lambda_k|$  in $B_k \cap \aone$. By induction, 
  $$|h_\bc^{\circ \ell} (0)- w_k(\bc)| = R_k |\lambda_k|^{-\ell},$$
  for all $\ell \ge 1$.

  From $\fc (h_\bc(0)) =0$, it follows that the lemma holds for $\ell =1$. Recursively, assume the lemma true for $\ell$.
  Since $h_\bc^{\circ \ell}(0)=f_\bc (h_\bc^{\circ \ell+1}(0))$,
  $$|\lambda_k| \cdot |\partial_{k+1} h_\bc^{\circ \ell+1} (0)| = |\partial_{k+1} h_\bc^{\circ \ell} (0) - (\partial_{k+1} \fc) (h_\bc^{\circ \ell+1}(0))|,$$
  and 
  $$|(\partial_{k+1} \fc) (h_\bc^{\circ \ell+1}(0))| = \dfrac{R_k^2}{R^2_{k+1}} >\dfrac{R_k^2}{R_{k+1}^2 \cdot |\lambda_k|} = |\partial_{k+1} h_\bc^{\circ \ell} (0)|.$$
 The desired formula for $|\partial_{k+1} h_\bc^{\circ \ell+1} (0)|$ follows.
\end{proof}

\begin{lemma}
  \label{l:cprime}
  Let $k \ge 1$, 
  Consider $\bc \in \parC (\br)$ and $x \in \aone$ such that
  $$f_{\bc}^{\circ N_k} (x) = w_k(\bc).$$
  Assume that \eqref{eq:stability} holds for $k+1$ and,  \eqref{eq:transversality}, \eqref{eq:connecting}  hold for $k$.
   Then, there exists $\bc'' \in U_{k+1}(\bc, \varepsilon_{k+1})$
  such that
  $$f_{\bc''}^{\circ N_k} (x) = h_{\bc''}^{\circ \ell_{k+1}} (0).$$
\end{lemma}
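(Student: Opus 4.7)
I would reduce finding $\bc''$ to solving a one-variable analytic equation. For $c$ in the open disk $D(c_{k+1}, R_{k+1})$ (which does not contain $0$, since $|c_{k+1}|=R_{k+1}$, so every factor $1-z/c$ in $f_{\bc''}$ is analytic in $c$), let $\bc''(c)$ denote the parameter obtained from $\bc$ by replacing only its $(k+1)$-th coordinate by $c$, and set
\[ \Phi(c) := f_{\bc''(c)}^{\circ N_k}(x) - h_{\bc''(c)}^{\circ \ell_{k+1}}(0). \]
Both terms are analytic in $c$; for the second, this is a consequence of an implicit-function argument using that $|f'_{\bc''}|$ is constantly $|\lambda_k|$ on $B_k$. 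The lemma will follow from producing $c' \in D(c_{k+1}, \varepsilon_{k+1})$ with $\Phi(c')=0$ and taking $\bc'' := \bc''(c')$.

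\textbf{Estimates of $\Phi$ and $\Phi'$.} From the hypothesis $f_\bc^{\circ N_k}(x) = w_k(\bc)$ and Lemma~\ref{l:h} applied to $\bc$, one immediately gets
\[ |\Phi(c_{k+1})| = |w_k(\bc) - h_\bc^{\circ \ell_{k+1}}(0)| = R_k |\lambda_k|^{-\ell_{k+1}}. \]
To handle the derivative, I would first invoke Lemma~\ref{l:stability-with} to conclude that $x \in C_{\bc''(c)}(\balpha^{(k)})$ for every $c \in D(c_{k+1}, \varepsilon_{k+1})$; this allows Lemma~\ref{l:deltajNk} (with $j=k+1$ and $\ell=\ell'=0$) to give $|\partial_c f_{\bc''(c)}^{\circ N_k}(x)| = R_k S_k / R_{k+1}^2$, while Lemma~\ref{l:h} yields $|\partial_c h_{\bc''(c)}^{\circ \ell_{k+1}}(0)| = R_k^2/(R_{k+1}^2 |\lambda_k|)$. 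Since $r \mapsto \varphi(r)/r$ is strictly increasing on $(\varrho,\infty)$ (as is clear from the piecewise monomial formula for $\varphi$) and $S_k < R_k$, one has $|\lambda_k| = \varphi(R_k)/R_k > \varphi(S_k)/S_k = R_k/S_k$, so the first partial strictly dominates and by the ultrametric inequality $|\Phi'(c)| \equiv M := R_k S_k/R_{k+1}^2$ on the disk $D(c_{k+1}, \varepsilon_{k+1})$.

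\textbf{Root extraction and main obstacle.} Having $|\Phi'|$ constantly equal to $M$ on the open disk $D(c_{k+1}, \varepsilon_{k+1})$, the non-Archimedean Mean Value Theorem~\ref{thr:rolle} together with the standard Weierstrass preparation/Newton argument shows that $\Phi$ restricts to an analytic isomorphism from $D(c_{k+1}, \varrho \varepsilon_{k+1})$ onto $D(\Phi(c_{k+1}), M \varrho \varepsilon_{k+1})$. The hypothesis~\eqref{eq:connecting} then rearranges as
\[ |\Phi(c_{k+1})| = M \cdot \frac{R_{k+1}^2 |\lambda_k|^{-\ell_{k+1}}}{S_k} < M \cdot \frac{\varrho \varepsilon_{k+1}}{p} < M \varrho \varepsilon_{k+1}, \]
placing $0$ strictly inside the target disk and thus yielding a unique $c' \in D(c_{k+1}, \varrho \varepsilon_{k+1}) \subset D(c_{k+1}, \varepsilon_{k+1})$ with $\Phi(c') = 0$, as required. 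The main technical hurdle will be ensuring that the derivative formulas of Lemmas~\ref{l:h} and~\ref{l:deltajNk} remain valid uniformly over the entire perturbation disk and not only at $c=c_{k+1}$: this is exactly where Lemma~\ref{l:stability-with} is used, to transport the itinerary of $x$ along the perturbation; once that is established, the monotonicity comparison and the open-mapping step are routine.
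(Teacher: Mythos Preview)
Your proposal is correct and follows essentially the same route as the paper's proof: the paper defines the same one-variable map $H(\ba)=f_\ba^{\circ N_k}(x)-h_\ba^{\circ\ell_{k+1}}(0)$ on $U_{k+1}(\bc,\varepsilon_{k+1})$, invokes Lemma~\ref{l:stability-with} to keep $x\in C_\ba(\balpha^{(k)})$, uses Lemmas~\ref{l:deltajNk} and~\ref{l:h} for the two partial derivatives, compares them via $|\lambda_k|>R_k/S_k$, and then applies Theorem~\ref{thr:rolle} together with~\eqref{eq:connecting} to locate a zero in $U_{k+1}(\bc,\varrho\,\varepsilon_{k+1})$. The only cosmetic difference is that you spell out the monotonicity of $r\mapsto\varphi(r)/r$ to justify $|\lambda_k|>R_k/S_k$, which the paper simply asserts.
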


\begin{proof}
   Let  $H: U_{k+1}(\bc, \varepsilon_{k+1}) \to \aone$ be defined by
  $$H(\ba):= f_\ba^{\circ N_k}(x) - h_\ba^{\circ \ell_{k+1}} (0).$$
  For all $\ba \in U_{k+1}(\bc, \varepsilon_{k+1})$, combining the previous
  lemma with lemmas~\ref{l:deltajNk} and~\ref{l:stability-with},  we have that $x \in C_\ba (\balpha^{(k)})$ and
  $$|\partial_{k+1} f_\ba^{\circ N_k} (x)| = \dfrac{R_k S_k}{R_{k+1}^2} > \dfrac{R_k^2}{R_{k+1}^2 \cdot |\lambda_k|} = |\partial_{k+1} h_\ba^{\circ \ell_{k+1}} (0)|.$$
 Thus,
  $$|\partial_{k+1}H (\ba)| = \dfrac{R_k S_k}{R_{k+1}^2},$$
  for all $\ba \in  U_{k+1}(\bc, \varepsilon_{k+1})$.
  By  Theorem~\ref{thr:rolle},  
  $$H( U_{k+1}(\bc, \varrho \, \varepsilon_{k+1})) = D \left( H(\bc), \varrho \, \varepsilon_{k+1} \dfrac{R_k S_k}{R_{k+1}^2} \right).$$
  From Lemma~\ref{l:h} and \eqref{eq:connecting},
  $$|H(\bc)| = R_{k} |\lambda_{k+1}|^{-\ell_{k+1}} <  \varrho \, \varepsilon_{k+1} \dfrac{R_k S_k}{R_{k+1}^2}.$$
  Hence, there exists $\bc'' \in U_{k+1}(\bc, \varrho \, \varepsilon_{k+1})$ such that $H(\bc'') =0$ and the lemma follows.
\end{proof}

\begin{proof}[Proof of Lemma~\ref{l:connecting}]
  Let $r$ be such that
     $\varphi^{\circ\, n_{k+1}+m_{k+1}} (r) =R_{k+1}$. It follows that $\varrho < r < p$.
    Consider
    $$r' := r \dfrac{R_{k+1}^2}{R_k S_k} |\lambda_k|^{-\ell_{k+1}}.$$
    Note that, from ~\eqref{eq:connecting}, $r' < \varrho \varepsilon_{k+1}$.
    Consider $\bc'' = (c_j'')$ as in Lemma~\ref{l:cprime} and let
    $$\overline{U} := \{ \ba = (a_j) \in U_{k+1}(\bc'',\varepsilon_{k+1}) : |a_{k+1} - c''_{k+1} | \le r' \},$$
    $${U} := \{ \ba = (a_j) \in U_{k+1} (\bc'',\varepsilon_{k+1}) : |a_{k+1} - c''_{k+1} | < r' \}.$$
    Naturally, we may identify $\overline{U}$ and $U$ with $\overline{D}(0,r') \subset \C_p$
    and ${D}(0,r') \subset \C_p$, respectively.
    We will find a parameter $\bc' \in \overline{U} \setminus U$,
    as in the statement of the lemma.

    Consider
    $$
    \begin{array}[h]{rccl}
      F :& \overline{U}& \to & \C_p\\
      & \ba & \mapsto & f_\ba^{\circ L_{k+1}} (x).
    \end{array}
    $$
    
    \noindent
    \emph{Claim.} The map
    $F: \overline{U} \to \overline{D}(0,r)$ is an analytic isomorphism.

    \noindent
    \emph{Proof of the Claim.}
    Let
    $$\begin{array}[h]{rccl}
      F_0:& U_{k+1} (\bc', \varepsilon_{k+1})& \to & \C_p\\
      & \ba & \mapsto & f_\ba^{\circ N_{k}} (x).
    \end{array}
    $$
    By Lemma~\ref{l:deltajNk}, the derivative $F'_0(\ba)$ has constant
    absolute value $\dfrac{R_k S_k}{R_{k+1}^2}$. By Theorem~\ref{thr:rolle}, for all $\ba, \ba' \in \overline{U}$,
    $$|F_0 (\ba) - F_0(\ba')| = \dfrac{R_k S_k}{R_{k+1}^2} |\ba -\ba'|.$$
    From the definition of $r'$, it follows that $F_0$ maps $\overline{U}$ isomorphically onto a disk of radius $r |\lambda_k|^{-\ell_{k+1}}$, which is contained in $B_k$. 
    Now, for $1 \le \ell \le \ell_{k+1}$, consider $$
    \begin{array}[h]{rccl}
      F_\ell:& \overline{U}& \to & \C_p\\
      & \ba & \mapsto & f_\ba^{\circ N_{k} + \ell} (x).
    \end{array}
    $$
    The claim will follow once we prove the following assertion: for all  $0 \le \ell \le \ell_{k+1}$, $$|F_\ell (\ba) - F_\ell(\ba')| = |\lambda_k|^\ell \dfrac{R_k S_k}{R_{k+1}^2} |\ba -\ba'|,$$
    and $F_\ell$ is an  analytic isomorphism whose image is a ball of radius
    $r |\lambda_k|^{-\ell_{k+1}+\ell}$.
    Indeed, recursively, suppose  the assertion true for $\ell < \ell_{k+1}$.
    Then, $F_\ell (\bc'') = h_{\bc''}^{\circ \ell_{k+1} - \ell} (0) \in B_k$. Therefore,
    $F_\ell (\overline{U}) \subset B_k$ since 
    $r |\lambda_k|^{-\ell_{k+1}+\ell} < R_k$.
    Hence, for all $\ba, \ba' \in \overline{U}$,
    \begin{eqnarray*}
      |f_\ba (F_\ell(\ba)) - f_{\ba'}(F_\ell(\ba))| &=& \dfrac{R^2_k}{R_{k+1}^2}
                                                        |\ba-\ba'|\\
      & < & |\lambda_k|^{\ell+1} \dfrac{R_k S_k}{R_{k+1}^2} |\ba -\ba'|\\
      & = & |f_{\ba'} (F_\ell(\ba))- f_{\ba'} (F_\ell(\ba'))|,
    \end{eqnarray*}
    where the first line is from Lemma~\ref{l:variationBk}, the second line follows from $R_k/S_k < |\lambda_k|$, and the third line is a consequence of the assertion for $\ell$ and Lemma~\ref{l:absfc}.
    Then,
    \begin{eqnarray*}
      |F_{\ell+1} (\ba) - F_{\ell+1}(\ba')| &=& |f_\ba (F_\ell(\ba)) - f_{\ba'}(F_\ell(\ba)) + f_{\ba'} (F_\ell(\ba))- f_{\ba'} (F_\ell(\ba')) |\\
      &=&  |\lambda_k|^{\ell+1} \dfrac{R_k S_k}{R_{k+1}^2} |\ba -\ba'|,
    \end{eqnarray*}
and the claim follows. 

\medskip
Note that for all $\ba \in \overline{U} \setminus U$, we have
$|f^{\circ L_{k+1}}_\ba (x)| = |F_{\ell_{k+1}} (\ba)| = r$. Thus, the lemma reduces to proving the existence of a solution in $\overline{U} \setminus U$ to the equation:
$$f^{\circ N_{k+1}}_\ba (x) - w_{k+1}(\ba) =0.$$
    To show the existence of such a solution,
    given  $\ba \in \overline{U}$, let
    $$g_\ba:=f^{\circ\, n_{k+1}+m_{k+1}}_\ba : \overline{D}(0,r) \to  \overline{D}(0,R_{k+1}).$$
    Note that, for all $\ba \in \overline{U}$, the map $g_\ba$ is onto
    and its Weierstrass degrees  on $\overline{D}(0,r)$
    and ${D}(0,r)$ are both $\delta$, for some positive integer $\delta$ independent of $\ba$. Since $F: \overline{U} \to \overline{D}(0,r)$ has Weierstrass degrees $1$ both on $\overline{U}$ and on $U$, it is not difficult to check that the Weierstrass degrees of $\ba \mapsto g_\ba (F_\ba(x))$ on $\overline{U}$ and on $U$ are both $\delta$.

    To finish, observe that, for all $\ba \in   \overline{U}$, the fixed point $w_{k+1}(\ba)$ lies
    in $B_k$. In particular  $|w_{k+1}(\ba)|= R_k$.
    Therefore, the Weierstrass degree of
    $$\ba \to g_\ba (F(\ba)) - w_{k+1} (\ba) = f_\ba^{\circ N_{k+1}} (x) -w_{k+1} (\ba)$$
    on $\overline{U}$ is $\delta$ and on $U$ is $0$.
    Hence, there exists a parameter $\bc' \in \overline{U} \setminus U$
    which solves the equation and the lemma follows.
\end{proof}


\end{document}